\documentclass[reqno]{amsart}
\usepackage{amsmath, amssymb, amsthm, epsfig}
\usepackage{hyperref, latexsym}
\usepackage{url}
\usepackage[mathscr]{euscript}
\usepackage{enumerate}
\usepackage{color}
\usepackage{fullpage} 
\usepackage{setspace}
\usepackage{graphicx} 
\usepackage{mathabx}

\onehalfspacing

\allowdisplaybreaks

\def\today{\ifcase\month\or
  January\or February\or March\or April\or May\or June\or
  July\or August\or September\or October\or November\or December\fi
  \space\number\day, \number\year}

 \newtheorem{theorem}{Theorem}
  
 \newtheorem{lemma}[theorem]{Lemma}
 \newtheorem{proposition}[theorem]{Proposition}
 \newtheorem{corollary}[theorem]{Corollary}
 \theoremstyle{definition}

 \theoremstyle{remark}

 \newcommand{\mc}{\mathcal}

 \newcommand{\J}{\mc{J}}

 \newcommand{\C}{\mathbb{C}}
 \newcommand{\R}{\mathbb{R}}
 \newcommand{\N}{\mathbb{N}}
 
 \newcommand{\Z}{\mathbb{Z}}

 \newcommand{\ds}{\text{\rm d}s}

  \renewcommand{\d}{\text{\rm d}}

 \newcommand{\dx}{\text{\rm d}x}
 
 \newcommand{\dy}{\text{\rm d}y}

\newcommand{\ov}{\overline}

\begin{document}
\title[Hilbert spaces and low-lying zeros of $L$-functions]{Hilbert spaces and low-lying zeros of $L$-functions}
\author[Carneiro, Chirre, and Milinovich]{Emanuel Carneiro, Andr\'{e}s Chirre and Micah B. Milinovich}
\subjclass[2010]{46E22, 11M26, 11F66, 11M41}
\keywords{Non-vanishing; families of $L$-functions; one-level density; low-lying zeros; Fourier optimization; Hilbert spaces; reproducing kernels.} 

\address{
ICTP - The Abdus Salam International Centre for Theoretical Physics,
Strada Costiera, 11, I - 34151, Trieste, Italy.}

\email{carneiro@ictp.it}

\address{Department of Mathematical Sciences, Norwegian University of Science and Technology, NO-7491 Trondheim, Norway.}

\email{carlos.a.c.chavez@ntnu.no }

\address{Department of Mathematics, University of Mississippi, University, MS 38677 USA.}

\email{mbmilino@olemiss.edu}

\allowdisplaybreaks
\numberwithin{equation}{section}

\maketitle  

\begin{abstract}
Generalizing previous work of Iwaniec, Luo, and Sarnak (2000), we use information from one-level density theorems to estimate the proportion of non-vanishing of $L$-functions in a family at a low-lying height on the critical line (measured by the analytic conductor). To solve the Fourier optimization problems that arise, we provide a unified framework based on the theory of reproducing kernel Hilbert spaces of entire functions (there is one such space associated to each symmetry type). Explicit expressions for the reproducing kernels are given. We also revisit the problem of estimating the height of the first low-lying zero in a family, considered by Hughes and Rudnick (2003) and Bernard (2015). We solve the associated Fourier optimization problem in this setting by establishing a connection to the theory of de Branges spaces of entire functions and using the explicit reproducing kernels. In an appendix, we study the related problem of determining the sharp embeddings between the Hilbert spaces associated to the five symmetry types and the classical Paley-Wiener space.
\end{abstract}


\section{Introduction}
A central topic in number theory is to understand the distribution of zeros of $L$-functions. In particular, a great deal of effort has gone into proving that many $L$-functions in a family cannot simultaneously vanish at a given point. For any $t>0$, it is generally believed that at most one principal automorphic $L$-function with unitary central character can vanish at the point $s=\frac{1}{2}+it$ on the critical line. This belief is a consequence of the so-called Grand Simplicity hypothesis, e.g.~\cite{RS}, which asserts that the multi-set of positive ordinates of zeros of all principal automorphic $L$-functions are linearly independent over $\mathbb{Q}$. In this paper, we use results on 1-level density for low-lying zeros of families of $L$-functions and the solution of a certain extremal problem involving entire functions of exponential type, to study the non-vanishing of $L$-functions at low-lying heights on the critical line (where the height is measured in terms of the analytic conductor). This is a generalization of a problem considered by Iwaniec, Luo, and Sarnak in \cite[Appendix A]{ILS}, who were interested in using 1-level density results to study the non-vanishing of $L$-functions in families at the central point. We solve our extremal problem using different methods, appealing to the framework of reproducing kernel Hilbert spaces of entire functions developed by Carneiro, Chandee, Littmann, and Milinovich in \cite{CCLM}.

\smallskip

In a complementary direction, we also address here the problem of estimating the height of the first low-lying zero in a family of $L$-functions. This was first considered by Hughes and Rudnick \cite[Theorem 8.1]{HR} in the context of Dirichlet $L$-functions, and is connected to a different extremal problem in analysis. The solution of the corresponding extremal problem for other families of $L$-functions was later obtained in the impressive work of Bernard \cite{Bernard}, by means of a delicate analysis of an associated Volterra differential equation in connection to  the classical bases of Chebyshev polynomials. We provide here an alternative approach to solve this extremal problem for all families, obtaining it as a corollary of a more general result within the rich theory of de Branges spaces of entire functions \cite{Branges}. We conclude the paper with an appendix in which we determine the sharp embeddings between the Hilbert spaces naturally associated to families of $L$-functions and the classical Paley-Wiener space.

\subsection{Dirichlet $L$-functions} As an illustration of our more general results in Theorem \ref{Thm2_Non-vanishing}, we first consider the family of primitive Dirichlet $L$-functions modulo a prime $q$.  

\begin{theorem} \label{Dirichlet1}
Let $q$ be prime and assume the generalized Riemann hypothesis \textup{(GRH)} for Dirichlet $L$-functions modulo $q$. Then, for any fixed $t>0$, we have 
\[
\frac{1}{q\!-\!2}\sum_{\substack{\chi \, (\textup{mod }q) \\ \chi \ne \chi_0}} \ \underset{s=\frac{1}{2}}{\textup{ord}} \ L\!\left(s+ \frac{2 \pi i t }{\log q}\,,\,\chi\right) \ \le \ \frac{1}{2} \, \left(1+\left|\frac{\sin 4 \pi t }{4\pi t}\right|\right)^{\!-1} + \ O\!\left(\frac{1}{\log q}\right),
\]
where $\chi_0$ denotes the principal character \textup{(mod $q$)}. Hence, for any $\varepsilon>0$, the proportion of primitive Dirichlet characters $\chi$ \textup{(mod $q$)} for which $L\!\left( \frac{1}{2}+ \frac{2 \pi i t }{\log q}\,,\,\chi\right) \ne 0$
is at least $1-\frac{1}{2} \, \left(1+\left|\frac{\sin 4 \pi t }{4\pi t}\right|\right)^{\!-1} - \varepsilon$ when $q$ is large.
\end{theorem}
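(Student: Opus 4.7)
My plan is to combine the standard one-level density for the family of primitive Dirichlet characters modulo a prime $q$---a unitary family---with the solution of a Fourier extremal problem in the Paley--Wiener space, obtained via reproducing kernels. Under GRH, the one-level density asserts that for any even Schwartz $\phi$ with $\widehat\phi$ compactly supported in $(-2,2)$,
\[
\frac{1}{q-2}\sum_{\chi\neq\chi_0}\sum_{\gamma_\chi}\phi\!\left(\gamma_\chi\tfrac{\log q}{2\pi}\right) \,=\, \int_{\mathbb{R}}\phi(x)\,dx + O\!\left(\tfrac{1}{\log q}\right),
\]
where the inner sum runs over the non-trivial ordinates $\gamma_\chi$ of $L(s,\chi)$.

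Fix $t>0$ and select $\phi$ non-negative, even, band-limited (with $\widehat\phi$ supported in $[-\Delta,\Delta]$ for some $\Delta<2$), and normalized so that $\phi(\pm t)\ge 1$. Dropping all but the zeros at $\tfrac12\pm\tfrac{2\pi i t}{\log q}$, and using the relation $L(s,\bar\chi)=\overline{L(\bar s,\chi)}$ together with the bijection $\chi\leftrightarrow\bar\chi$ to equate the total orders at the two conjugate heights (this is where the factor of $\tfrac12$ arises), one obtains
\[
\frac{1}{q-2}\sum_{\chi\neq\chi_0}\textup{ord}_{s=\frac12} L\!\left(s+\tfrac{2\pi i t}{\log q},\chi\right) \,\le\, \tfrac12\int_{\mathbb{R}}\phi(x)\,dx + O\!\left(\tfrac{1}{\log q}\right).
\]

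The remaining task is to minimize $\int\phi$ over the admissible class. Via the Akhiezer--Krein factorization I would write $\phi=|F|^2$ with $F$ entire of exponential type $2\pi$ and in $L^2(\mathbb{R})$; then $\int\phi = \|F\|_{L^2}^2$ and the pointwise constraints become $|F(\pm t)|\ge 1$. The Paley--Wiener reproducing kernel $K(x,y)=\tfrac{\sin 2\pi(x-y)}{\pi(x-y)}$ converts this into a two-node constrained least-squares problem: the minimizer lies in $\mathrm{span}\{K(\cdot,t),K(\cdot,-t)\}$, and after optimizing over the phases of $F(\pm t)$ on the unit circle (only their moduli are constrained), a short linear-algebra computation gives
\[
\min\|F\|_{L^2}^2 \,=\, \frac{2}{K(t,t)+|K(t,-t)|} \,=\, \frac{1}{1+\left|\frac{\sin 4\pi t}{4\pi t}\right|}.
\]
Substituting back and taking $\Delta\to 2^-$, so the discrepancy is absorbed into the $O(1/\log q)$ term, produces the stated inequality. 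The non-vanishing proportion then follows since the left-hand side dominates the proportion of $\chi$ for which $L\!\left(\tfrac12+\tfrac{2\pi i t}{\log q},\chi\right)=0$. The step I expect to require the most care is the phase optimization: because only $|F(\pm t)|$ are constrained, the absolute value $\left|\sin 4\pi t/4\pi t\right|$---rather than the signed version---is what enters, and this freedom in choosing the relative phase of $F(t)$ and $F(-t)$ is precisely the source of the sharp constant.
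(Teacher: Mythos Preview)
Your proposal is correct and follows essentially the same route as the paper: the Hughes--Rudnick one-level density input, the duality $\chi\leftrightarrow\bar\chi$ to halve the bound, Krein's factorization $\phi=|F|^2$, and the two-point reproducing-kernel minimization in the Paley--Wiener space $\mathcal{H}_{2\pi}$ yielding $\min\|F\|_{L^2}^2 = 2/(K(t,t)+|K(t,-t)|)$. The only cosmetic difference is that the paper observes the Hughes--Rudnick theorem already applies to even continuous test functions with $\mathrm{supp}(\widehat\phi)\subset[-2,2]$ and polynomial decay, so it plugs the $\Delta=2$ extremizer in directly rather than passing through a $\Delta\to 2^-$ approximation.
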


Assuming GRH, Murty \cite{MU} proved that the proportion of primitive Dirichlet characters $\chi$ (mod $q$) for which $L(\frac{1}{2},\chi)\ne 0$ is at least $\frac{1}{2}-\varepsilon$ for any $\varepsilon>0$.  Our result in Theorem \ref{Dirichlet1} always gives at least as good of a proportion of non-vanishing, and in some cases a significantly larger proportion of non-vanishing, at every fixed low-lying height on the critical line. For example, assuming GRH, Theorem \ref{Dirichlet1} implies that the proportion of primitive Dirichlet characters $\chi$ (mod $q$) for which $L\Big( \frac{1}{2}+ \frac{ \pi i }{4\log q}\,,\,\chi\Big) \ne 0$ is at least $\frac{4+\pi}{4+2\pi} - \varepsilon = 0.69449\ldots$. In Figure \ref{figure1}, we plot the proportion of non-vanishing implied by Theorem \ref{Dirichlet1}. The proportion tends to $3/4$ as $t \to 0^+$ (note, however, that our proof requires $t>0$). If $t=0$, then our method simply recovers Murty's result\footnote{There is no ``discontinuity" hidden in the method here. Looking from the vanishing side, one may argue that the $1/2$-vanishing at the central point would split into a $1/4$-vanishing at $t$ and a $1/4$-vanishing at $-t$, for $|t|$ small.}, and we comment more on this later. Also note the proportion in Theorem \ref{Dirichlet1} tends to $\frac{1}{2}$ as $t\to +\infty$, implying that our result only improves upon Murty's proportion for low-lying heights. 

\smallskip

\begin{figure} 
\includegraphics[scale=.7]{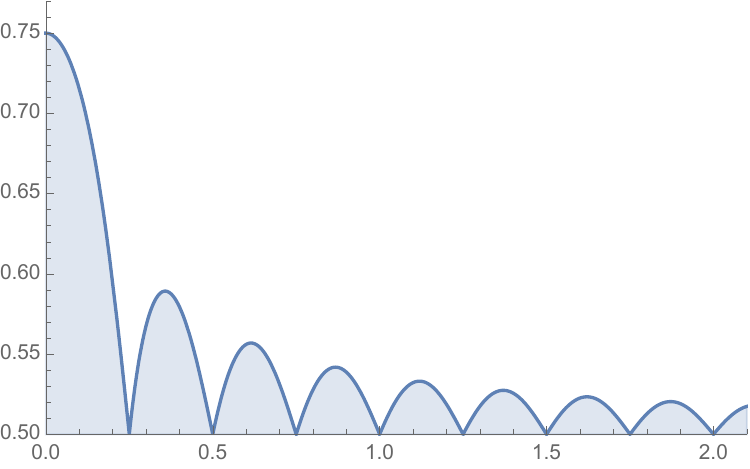} 
\caption{A plot of the function $t \mapsto 1-\frac{1}{2} \left(1+\left|\frac{\sin 4 \pi t }{4\pi t}\right|\right)^{\!-1}$ for small $t>0$. }
\label{figure1}
\end{figure}

The set of primitive Dirichlet $L$-functions modulo $q$ is an example of a family of $L$-functions with unitary symmetry. In \S \ref{SSec:Examples}, below, we give additional examples of families of $L$-functions with other symmetry types that also fall under the scope of our general machinery described in Theorem \ref{Thm2_Non-vanishing}. Perhaps of particular interest are examples of families of $L$-functions with an odd functional equation (and odd orthogonal symmetry). In this case, the $L$-functions are guaranteed to vanish at $s=\frac{1}{2}$ and the phenomenon of `zero-repulsion' suggests that it should be unlikely that a large proportion within a family also vanishes at low-lying heights on the critical line. Our results support this. 

\smallskip

There are several unconditional results concerning the proportion of non-vanishing of the family of primitive Dirichlet $L$-functions (mod $q$) at the central point, for instance \cite{BM, B,IS,KN,KMN}. In the spirit of Theorem \ref{Dirichlet1}, it would be interesting to see if mollifier methods can be used to unconditionally give an increased proportion of non-vanishing in this family at low-lying heights, without appealing to GRH.

\subsection{A Hilbert space framework for estimates of non-vanishing of $L$-functions in families}

\subsubsection{1-level density and symmetry groups} 

Let $\mathcal{F}$ be a family of automorphic objects. For each $f \in \mathcal{F}$, let 
\[
L(s,f)=\sum_{n=1}^\infty \lambda_f(n) \, n^{-s}
\] 
be the associated $L$-function and we assume that $L(s,f)$ admits an analytic continuation to an entire function. We denote the non-trivial zeros of $L(s,f)$ by $\rho_f=\frac{1}{2}+i\gamma_f$ and we work under the assumption of the generalized Riemann hypothesis (GRH) for such families. This means that $\gamma_f \in \R$. Broadly speaking, we are interested in the distribution of the ordinates $\gamma_f$ of the low-lying zeros of $L(s,f)$, i.e.~the distribution of zeros close to the central point $s=\frac{1}{2}$ (measured by the analytic conductor), as $f$ varies over the family $\mathcal{F}$. 

\smallskip

For $f \in \mathcal{F}$, we assume that there is a completed $L$-function $\Lambda(s,f)=L_\infty(s,f) \, L(s,f)$ which satisfies a functional equation of the form
\begin{equation*}
\Lambda(s,f)= \varepsilon_f \, \Lambda\big(1-s,\bar{f} \, \big),
\end{equation*}
where $|\varepsilon_f|=1$ and $L\big(s,\bar{f} \, \big)$ is the dual $L$-function with Dirichlet series coefficients $\lambda_{\bar{f}}(n) = \overline{\lambda_f(n)}$. Throughout this paper, we always assume that our family $\mathcal{F}$ satisfies the following assumption:
\begin{equation} \label{assumption}
\text{if $f \in \mathcal{F}$, then $\bar{f} \in \mathcal{F}$}.
\end{equation}
We also assume that no zeros of $L_\infty(s,f)$ are on the line $\mathrm{Re}(s)=1/2$, in which case the functional equation implies that
\begin{equation} \label{duality}
\underset{s=\frac{1}{2} + it}{\textup{ord}} \, L\!\left(s, f\right) = \underset{s=\frac{1}{2} - it}{\textup{ord}} \, L\!\left(s,\bar{f}\,\right).
\end{equation}
This observation is key to the setup of the optimization problem that is used to prove Theorem \ref{Thm2_Non-vanishing}. If $L(s,f)= L\big(s,\bar{f} \, \big)$, then we say $L(s,f)$ is self-dual and in this case we assume that $\varepsilon_f = \pm 1$.  If $\varepsilon_f = 1$, then we say that the functional equation is even. If $\varepsilon_f = - 1$, then we say the functional equation is odd. 

\smallskip

The density conjecture of Katz and Sarnak \cite{KS1,KS2} asserts that for each natural family $\{L(s,f), \ f \in \mathcal{F}\}$ of $L$-functions there is an associated symmetry group $G=G(\mathcal{F}$), where $G$ is either: unitary ${\rm U}$, symplectic ${\rm Sp}$, orthogonal ${\rm O}$, even orthogonal ${\rm SO}(\rm{even})$, or odd orthogonal ${\rm SO}({\rm odd}).$\footnote{In the literature one finds small variations of this notation. Here we choose to follow the notation in \cite{ILS} for such groups.} We wish to consider averages over $f \in \mathcal{F}$, ordered by the conductor. Following the notation in \cite{ILS}, we let $\mathcal{F}(Q)$ denote either of the finite sets
\[
\{  f \in \mathcal{F} : c_f = Q \} \quad \text{ or } \quad \{  f \in \mathcal{F} : c_f \le Q \}
\]
as $Q \to \infty$, where $c_f$ denotes the analytic conductor of $L(s,f)$ and let $|\mathcal{F}(Q)|$ denote its cardinality. In the examples listed below, which of these two sets we consider is clear from context. If $\phi : \mathbb{R} \to \mathbb{R}$ is a smooth function whose Fourier transform has compact support, then Katz and Sarnak conjecture that
\begin{equation}\label{20210430_11:41}
\lim_{Q \to\infty} \, \frac{1}{|\mathcal{F}(Q)|} \sum_{f\in \mathcal{F}(Q)} \sum_{\gamma_f} \, \phi\!\left( \gamma_f \frac{\log c_f}{2\pi}\right) = \int_\mathbb{R} \phi(x) \, W_G(x) \, \d x,
\end{equation}
where the sum over $\gamma_f$ counts multiplicity and $W_G$ is a function (or density) depending on the symmetry group $G$ of $\mathcal{F}$. This is the so-called 1-level density of the low-lying zeros of the family; see the introduction of \cite{ILS} or the survey article \cite{KS1} for a more detailed discussion and the connection to random matrix theory. Since $\phi$ is expected to have some decay at infinity and hence localizes the sum near the origin, the zeros that are within $O(1/\log c_f)$ of the central point contribute more significantly. For these five symmetry groups, Katz and Sarnak determined the density functions:
\begin{equation}\label{densities}
\begin{split}
W_{ \rm U}(x) \, &= \, 1\,;
\\
W_{\rm Sp}(x) \, &= \, 1- \frac{\sin 2\pi x}{2\pi x}\,;
\\
W_{\rm O}(x) \, &= \, 1 + \tfrac{1}{2} \boldsymbol{\delta}_0(x)\,;
\\
W_{{\rm SO}(\rm{even})}(x) \, &= \, 1 + \frac{\sin 2\pi x}{2\pi x}\,;
\\
W_{{\rm SO}({\rm odd})}(x) \, &= \,  1- \frac{\sin 2\pi x}{2\pi x}+\boldsymbol{\delta}_0(x)\,,
\end{split}
\end{equation}
where $\boldsymbol{\delta}_0(x)$ is the Dirac distribution at $x=0$. 

\subsubsection{Hilbert spaces} \label{RKHS_Def} An entire function $F: \C \to \C$ is said to be of exponential type if 
$$\tau(F) := \limsup_{|z| \to \infty} \,|z|^{-1}  \log |F(z)|  < \infty.$$
In this case, the number $\tau(F)$ is called the exponential type of $F$. An entire function $F:\C \to \C$ is said to be real entire if its restriction to $\R$ is real-valued. For $\Delta >0$, the classical Paley-Wiener space, denoted here by $\mathcal{H}_{\pi \Delta}$, is the Hilbert space of entire functions $F$ of exponential type at most $\pi \Delta$ with norm 
$$\|F\|_{\mathcal{H}_{\pi \Delta}} := \|F\|_{L^2(\R)} = \left(\int_{\R} |F(x)|^2 \,\dx \right)^{1/2}< \infty.$$ 
This is a reproducing kernel Hilbert space, i.e.~a Hilbert space in which the evaluation functionals $\Psi_w:\mathcal{H}_{\pi \Delta} \to \C$ given by $\Psi_w(F) = F(w)$, for any fixed $w \in \C$, are continuous. The Paley-Wiener theorem establishes that $F \in \mathcal{H}_{\pi \Delta}$ if and only if $F \in L^2(\R) \cap C(\R)$ and ${\rm supp}(\widehat{F}) \subset [-\tfrac{\Delta}{2}, \tfrac{\Delta}{2}]$. Throughout the paper we adopt the normalization 
\[
\widehat{F}(y) = \int_{\R} e^{-2 \pi i x y} \,F(x) \,\dx \ \ \ {\rm and} \ \ \ \widecheck{F}(y) = \int_{\R} e^{2 \pi i x y} \,F(x) \,\dx
\] 
for the Fourier transform and its inverse, respectively.

\smallskip

Associated to each of our five symmetry groups $G \in \{{\rm U, Sp, O, SO}({\rm even}), {\rm SO}(\rm{odd})\}$, and each parameter $\Delta >0$, we consider the normed vector space $\mathcal{H}_{G, \pi \Delta}$ of entire functions $F$ of exponential type at most $\pi \Delta$ with norm
$$\|F\|_{\mathcal{H}_{G, \pi \Delta}}  := \|F\|_{L^2(\R, W_G)} = \left(\int_{\R} |F(x)|^2 \,W_G(x)\,\dx \right)^{1/2}< \infty.$$ 
Note that $\mathcal{H}_{{\rm U}, \pi \Delta} = \mathcal{H}_{\pi \Delta}$. We verify in Section \ref{Sec2}, via the uncertainty principle for the Fourier transform, that the spaces $\mathcal{H}_{G, \pi\Delta}$ and $\mathcal{H}_{\pi \Delta}$ are the same (as sets) and have equivalent norms. In particular, this implies that $\mathcal{H}_{G, \pi\Delta}$ is also a reproducing kernel Hilbert space. By the Riesz representation theorem, there is map $K_{G, \pi\Delta}: \C \times \C \to \C$ (called the reproducing kernel) such that, for each $w \in \C$, the function $z \mapsto K_{G,\pi \Delta}(w, z)$ belongs to $\mathcal{H}_{G, \pi\Delta}$ and the evaluation functional at $w$ is given by the inner product with $K_{G, \pi\Delta}(w, \cdot)$, that is
\begin{equation}\label{20210504_11:59}
F(w) = \langle F , K_{G, \pi\Delta}(w, \cdot)\rangle_{\mathcal{H}_{G, \pi\Delta}} = \int_{\R} F(x) \, \overline{K_{G, \pi\Delta}(w, x)} \,W_G(x)\,\dx
\end{equation}
for each $F \in \mathcal{H}_{G, \pi\Delta}$. Directly from the definition \eqref{20210504_11:59}, with $F = K_{G, \pi\Delta}(w, \cdot)$, it follows that 
\begin{equation}\label{20210512_09:33}
K_{G, \pi\Delta}(w,w) = \|K_{G, \pi\Delta}(w, \cdot)\|^2_{L^2(\R, W_G)}\geq 0
\end{equation}
for each $w \in \C$. In \S \ref{Sub5_Prelim} we verify that the inequality on the right-hand side of \eqref{20210512_09:33} is actually strict.

\subsubsection{The average order of vanishing of $L$-functions at low-lying heights} We are now in position to state our general result, which gives an upper bound for the average order of vanishing of $L$-functions in a family in terms of special values of the associated reproducing kernel.

\begin{theorem}
\label{Thm2_Non-vanishing} Let $\{L(s,f), \ f \in \mathcal{F}\}$ be a family of $L$-functions with an associated symmetry group $G \in \{{\rm U, Sp, O, SO}({\rm even}), {\rm SO}(\rm{odd})\}$, and assume that GRH holds for $L$-functions in this family. In addition, suppose that estimate \eqref{20210430_11:41} holds for even Schwartz functions $\phi:\R \to \R$ with ${\rm supp} (\widehat{\phi}) \subset (-\Delta, \Delta)$, for a fixed $\Delta >0$. Then, letting $K = K_{G, \pi\Delta}$ be the reproducing kernel of the associated Hilbert space $\mathcal{H}_{G, \pi\Delta}$, the following estimates hold:
\begin{itemize}
\item[(i)] \textup{(Average order of vanishing at the central point)} We have 
\begin{equation}\label{20210512_09:59}
\limsup_{Q \to\infty} \,\frac{1}{|\mathcal{F}(Q)|} \sum_{f \in \mathcal{F}(Q)} \ \underset{s=\frac{1}{2}}{\textup{ord}} \ L\left(s\,,\, f\right) \leq \frac{1}{K(0,0)}.
\end{equation}
\item[(ii)] \textup{(Average order of vanishing at a low-lying height)} For any $t >0$, we have
\begin{equation}\label{20210503_09:43}
\limsup_{Q \to\infty} \,\frac{1}{|\mathcal{F}(Q)|} \sum_{f \in \mathcal{F}(Q)} \ \underset{s=\frac{1}{2}}{\textup{ord}} \ L\left(s+ \frac{2 \pi i t }{\log c_f}\,,\, f\right) \leq \frac{1}{K(t,t) + |K(t,-t)|}.
\end{equation}
\end{itemize}
\end{theorem}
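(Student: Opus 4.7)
The plan is to apply the 1-level density asymptotic \eqref{20210430_11:41} with a non-negative test function $\phi\geq 0$ whose Fourier transform is supported in $(-\Delta,\Delta)$, and to recognize the resulting optimization as an evaluation problem in the RKHS $\mathcal{H}_{G,\pi\Delta}$. The building block is: given any $F$ in the slightly smaller space $\mathcal{H}_{G,\pi(\Delta-\varepsilon)}$, the choice $\phi:=|F|^2$ is non-negative with $\widehat{\phi}$ supported inside $[-(\Delta-\varepsilon),\Delta-\varepsilon]\subset(-\Delta,\Delta)$. For \eqref{20210512_09:59}, dropping all but the central-zero contribution yields
\[
|F(0)|^2 \cdot \underset{s=\frac{1}{2}}{\textup{ord}}\, L(s,f) \ \leq \ \sum_{\gamma_f} \phi\!\left(\gamma_f\tfrac{\log c_f}{2\pi}\right).
\]
Averaging over $\mathcal{F}$ and invoking \eqref{20210430_11:41} bounds the limsup by $\|F\|^2/|F(0)|^2$. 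Cauchy--Schwarz applied to $F(0) = \langle F, K^{(\varepsilon)}(0,\cdot)\rangle$, where $K^{(\varepsilon)}$ denotes the kernel of the smaller space, shows that the infimum of $\|F\|^2/|F(0)|^2$ equals $1/K^{(\varepsilon)}(0,0)$, attained at $F = K^{(\varepsilon)}(0,\cdot)$; sending $\varepsilon\to 0^+$ gives \eqref{20210512_09:59}.

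\medskip

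The proof of \eqref{20210503_09:43} follows the same template but crucially exploits the duality \eqref{duality} and the closure property \eqref{assumption} to handle the conjugate pair of heights $\pm t$ simultaneously. Fix $t>0$; given $F\in\mathcal{H}_{G,\pi(\Delta-\varepsilon)}$, set $\phi(x):=|F(x)|^2 + |F(-x)|^2$, which is even, non-negative, and Fourier-supported in $(-\Delta,\Delta)$. Writing $m^{\pm}_f$ for the order of vanishing of $L(s,f)$ at $s=\frac{1}{2}\pm\frac{2\pi it}{\log c_f}$, the zeros at the two scaled ordinates $\pm t$ contribute at least $(|F(t)|^2+|F(-t)|^2)(m^+_f + m^-_f)$ to $\sum_{\gamma_f}\phi(\gamma_f\log c_f/2\pi)$. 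The relations \eqref{duality} and \eqref{assumption} force $\sum_{f\in\mathcal{F}}m^+_f = \sum_{f\in\mathcal{F}}m^-_f$, while the evenness of $W_G$ yields $\int\phi\, W_G\,\dx = 2\|F\|^2$. Feeding these into the 1-level density delivers
\[
\limsup_{\#\mathcal{F}\to\infty}\ \frac{1}{\#\mathcal{F}} \sum_{f\in\mathcal{F}} m^+_f \ \leq \ \frac{\|F\|^2}{|F(t)|^2 + |F(-t)|^2}.
\]

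\medskip

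To minimize this ratio, consider the bounded evaluation operator $T\colon \mathcal{H}_{G,\pi(\Delta-\varepsilon)}\to\C^2$ defined by $TF:=(F(t), F(-t))$. The desired supremum is $\|T\|^2$, i.e.\ the largest eigenvalue of $TT^*$. The reproducing property identifies $T^*(c_1,c_2) = c_1 K^{(\varepsilon)}(t,\cdot) + c_2 K^{(\varepsilon)}(-t,\cdot)$, so $TT^*$ is the $2\times 2$ matrix with entries $K^{(\varepsilon)}(\pm t,\pm t)$. The evenness of $W_G$ makes the reflection $F\mapsto F^*(\cdot):=F(-\cdot)$ an isometry satisfying $\langle F^*, G\rangle = \langle F, G^*\rangle$; uniqueness of the reproducing kernel then forces $(K^{(\varepsilon)}_w)^* = K^{(\varepsilon)}_{-w}$, i.e.\ the symmetry $K^{(\varepsilon)}(w,-z) = K^{(\varepsilon)}(-w,z)$, which combined with the Hermitian identity $K^{(\varepsilon)}(w,z) = \overline{K^{(\varepsilon)}(z,w)}$ gives $K^{(\varepsilon)}(t,t) = K^{(\varepsilon)}(-t,-t) =: a$ and $K^{(\varepsilon)}(t,-t) = K^{(\varepsilon)}(-t,t) =: c \in \R$. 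Thus $TT^*$ has matrix form $\bigl(\begin{smallmatrix} a & c \\ c & a\end{smallmatrix}\bigr)$ with eigenvalues $a\pm c$, so $\|T\|^2 = a+|c|$; letting $\varepsilon\to 0^+$ yields \eqref{20210503_09:43}.

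\medskip

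The principal technical obstacle is the approximation step: general elements of $\mathcal{H}_{G,\pi\Delta}$ are only $L^2(W_G)$, so their squared moduli need not be Schwartz as required by the hypothesis of \eqref{20210430_11:41}. One must identify a dense subspace of sufficiently regular functions in each $\mathcal{H}_{G,\pi(\Delta-\varepsilon)}$ whose squared moduli qualify as admissible test functions (e.g.\ smooth functions of strictly smaller exponential type), and establish the pointwise continuity $K^{(\varepsilon)}(w,z)\to K_{G,\pi\Delta}(w,z)$ as $\varepsilon\to 0^+$, so that the limiting extremizers $K_{G,\pi\Delta}(0,\cdot)$ and $K_{G,\pi\Delta}(t,\cdot)\pm K_{G,\pi\Delta}(-t,\cdot)$, corresponding to the top eigenvector of $TT^*$, recover the stated bounds.
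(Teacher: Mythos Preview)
Your proof is correct and close in spirit to the paper's, but you solve the key optimization in part (ii) by a genuinely different (and somewhat more elementary) mechanism.

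\smallskip

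\textbf{Where the two agree.} Both arguments use the duality \eqref{assumption}--\eqref{duality} to trade the single height $t$ for the pair $\pm t$, plug a non-negative test function into \eqref{20210430_11:41}, and reduce to a two-point evaluation problem in the RKHS $\mathcal{H}_{G,\pi\Delta}$. For part (i) your Cauchy--Schwarz argument is exactly the paper's ``one-delta'' solution \eqref{20210512_12:33}--\eqref{20210512_14:33}.

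\smallskip

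\textbf{Where they differ.} For part (ii), the paper poses the \emph{two-delta extremal problem}: minimize $\int M\,W_G$ over non-negative $M$ of the right exponential type subject to the two constraints $M(t)\geq 1$ and $M(-t)\geq 1$. After Krein's factorization $M=|F|^2$, this becomes: minimize $\|F\|^2$ subject to $|F(t)|\geq 1$ and $|F(-t)|\geq 1$. The paper then invokes a geometric lemma from \cite{CCLM} (Lemma~\ref{HS_geometric_lemma} here) to get the value $2/(K(t,t)+|K(t,-t)|)$ and observes that the extremizer \eqref{20210504_13:44} happens to be even, as needed.

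You instead symmetrize the test function at the outset, $\phi=|F|^2+|F(-\cdot)|^2$, and are led to minimize $\|F\|^2/\bigl(|F(t)|^2+|F(-t)|^2\bigr)$. This is a \emph{different} optimization problem, solved by the top eigenvalue of the $2\times 2$ Gram matrix of $K(t,\cdot),\,K(-t,\cdot)$; the symmetry $K(t,t)=K(-t,-t)$ and $K(t,-t)=K(-t,t)\in\R$ (which you derive via the reflection isometry, matching the paper's properties (P2)--(P4) in \S\ref{Sub5_Prelim}) forces that eigenvalue to be $K(t,t)+|K(t,-t)|$. The factor of $2$ that the paper picks up from the two-delta constraint is exactly compensated by your factor $\int\phi\,W_G=2\|F\|^2$, so the final bound is the same.

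\smallskip

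\textbf{What each buys.} Your route is self-contained (no external lemma from \cite{CCLM}) and the test function is even by construction, so there is nothing to check afterwards. The paper's route is phrased in a more flexible constrained-minimization language that extends to asymmetric situations, and it makes the extremal function \eqref{20210504_13:44} explicit. On the approximation step you and the paper face the same issue; the paper resolves it by an explicit dilation-plus-mollification (see \S\ref{Approx_Arg}), which you could adopt verbatim in place of the continuity $K^{(\varepsilon)}\to K$ that you leave to be justified.
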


\smallskip

The main novelty in this theorem is part (ii), in which we use the duality condition \eqref{assumption} and the observation \eqref{duality} to substantially improve the estimate for the average order of vanishing at a low-lying height, when compared to the central point (essentially by a factor of 2 as $t \to 0^+$). In \S \ref{SSec:Examples}, we further discuss the reach of this theorem using specific examples and known 1-level density results in the literature. Bounding the order of vanishing at the central point, part (i) above, is a well-studied problem; the numerical upper bound on the right-hand side of \eqref{20210512_09:59} has been obtained by Iwaniec, Luo and Sarnak \cite[Appendix A]{ILS} when $\Delta =2$, and their methods were later extended by Freeman and Miller \cite{F, FM} with explicit computations for small values of $\Delta$. We revisit part (i) of the theorem here for the reader's convenience (to have all such bounds collected in one place), and also because our methods bring a slightly different perspective. As we shall see, there are two extremal problems in Fourier analysis connected to these bounds: for part (i) it is what we call the one-delta extremal problem, whereas for part (ii) it is a more general situation that we call the two-delta extremal problem. Iwaniec, Luo and Sarnak \cite[Appendix A]{ILS}, and later Freeman and Miller \cite{F, FM}, find the extremal functions for the one-delta problem by means of Fredholm theory. Here we provide a unified approach via the framework of reproducing kernel Hilbert spaces, where both extremal problems have an elegant solution in terms of the underlying reproducing kernels (see \textsection \ref{Sec2_proofs}). The hard part then becomes finding explicit representations for these reproducing kernels. We remark that the conceptual bounds on the right-hand sides of \eqref{20210512_09:59} and \eqref{20210503_09:43} are the best possible with such methods.

\subsubsection{Reproducing kernels} In order to be able to fully appreciate the power of the unifying framework of Theorem \ref{Thm2_Non-vanishing}, we need to understand the terms on the right-hand sides of \eqref{20210512_09:59} and \eqref{20210503_09:43}. One of the purposes of this paper, from the analysis point of view, is to discuss the explicit constructions of the reproducing kernels $K_{G, \pi\Delta}$ for the five symmetry groups $G$ and a free parameter $\Delta >0$. These are results of independent interest and may be useful in other situations, and for simplicity we structure them in one theorem per symmetry group. In order to explain the difficulties of this task, we start by recording here the (distributional) Fourier transforms of our five densities in \eqref{densities}:
\begin{align}\label{20210506_11:40_1}
\begin{split}
\widehat{W}_{\rm U}(y) \, &= \, \boldsymbol{\delta}_0(y)\,;
\\
\widehat{W}_{{\rm Sp}}(y) \, &= \, \boldsymbol{\delta}_0(y) - \tfrac{1}{2}\, {\bf 1}_{[-1,1]}(y)\,;
\\
\widehat{W}_{\rm O}(y) \, &= \, \boldsymbol{\delta}_0(y) + \tfrac{1}{2}\,;
\\
\widehat{W}_{\rm SO(even)}(y) \, &= \, \boldsymbol{\delta}_0(y) +\tfrac{1}{2} \,{\bf 1}_{[-1,1]}(y)\,;
\\
\widehat{W}_{\rm SO(odd)}(y) \, &= \, \boldsymbol{\delta}_0(y)- \tfrac{1}{2} \,{\bf 1}_{[-1,1]}(y)+1.
\end{split}
\end{align}
Note that, for each $\phi \in L^1(\R) \cap C(\R)$ with $\widehat{\phi}$ of compact support, by Plancherel's theorem, one has
\begin{equation}\label{20210506_11:42}
\int_{\R} \phi(x) \,W_{G}(x)\,\dx = \int_{\R} \widehat{\phi}(y) \,\widehat{W_{G}}(y)\,\dy.
\end{equation}
For the unitary symmetry, since the underlying Hilbert space $\mathcal{H}_{{\rm U}, \pi \Delta}$ is simply the classical Paley-Wiener space $\mathcal{H}_{\pi \Delta}$, its reproducing kernel is well-known. We recall it here for completeness.

\begin{theorem}[(Paley-Wiener) Reproducing kernel: unitary symmetry]   \label{Thm3_PW}
For any $\Delta >0$, we have
\begin{equation*}
K_{{\rm U},\pi\Delta}(w,z) 
= \dfrac{\sin\pi\Delta(z-\overline{w})}{\pi(z-\overline{w})}.
\end{equation*}
\end{theorem}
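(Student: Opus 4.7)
The plan is to compute the kernel directly using the Paley-Wiener characterization of $\mathcal{H}_{\pi\Delta}$ together with Plancherel's theorem. For each fixed $w \in \C$, define the candidate
$$K_w(z) := \frac{\sin\pi\Delta(z-\overline{w})}{\pi(z-\overline{w})}.$$
First I would verify that $K_w \in \mathcal{H}_{\pi\Delta}$: the apparent singularity at $z=\overline{w}$ is removable, $K_w$ is of exponential type exactly $\pi\Delta$, and on the real line the uniform bound $|\sin\pi\Delta(x-\overline{w})| \leq e^{\pi\Delta|\mathrm{Im}\,w|}$ combined with the $O(1/|x|)$ decay of $1/(x-\overline{w})$ yields $K_w \in L^2(\R)$.

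Next I would identify the Fourier transform of $K_w$ explicitly. Writing $2i\sin\pi\Delta(z-\overline{w}) = e^{i\pi\Delta(z-\overline{w})} - e^{-i\pi\Delta(z-\overline{w})}$ and recognizing the right-hand side as a straightforward integral produces the representation
$$K_w(z) = \int_{-\Delta/2}^{\Delta/2} e^{-2\pi i \overline{w}\, y}\,e^{2\pi i z y}\,\dy,$$
from which Fourier inversion yields $\widehat{K_w}(y) = e^{-2\pi i \overline{w}\, y}\,\mathbf{1}_{[-\Delta/2,\Delta/2]}(y)$.

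Finally, for any $F \in \mathcal{H}_{\pi\Delta}$ the Paley-Wiener theorem asserts $\widehat{F} \in L^2(\R)$ with $\mathrm{supp}\,\widehat{F} \subset [-\Delta/2,\Delta/2]$, and Plancherel's identity gives
$$\langle F, K_w\rangle_{L^2(\R)} \,=\, \int_{\R}\widehat{F}(y)\,\overline{\widehat{K_w}(y)}\,\dy \,=\, \int_{-\Delta/2}^{\Delta/2}\widehat{F}(y)\,e^{2\pi i w y}\,\dy \,=\, F(w),$$
the last step being Fourier inversion. This verifies the reproducing property, which identifies $K_{{\rm U},\pi\Delta}(w,z) = K_w(z)$ by uniqueness of the reproducing kernel. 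There is no genuine obstacle: the argument is a direct verification, and the only subtlety is the careful bookkeeping of complex conjugates — the kernel depends on $\overline{w}$ rather than $w$ precisely because the Hilbert-space inner product conjugates the second factor.
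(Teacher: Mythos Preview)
Your proposal is correct and is precisely the approach the paper has in mind: the paper simply states that the reproducing kernel for the classical Paley-Wiener space ``can be easily computed via Fourier inversion'' and writes down the answer without further detail. You have spelled out that Fourier-inversion argument in full, so there is nothing to add.
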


For the orthogonal symmetry we provide a complete characterization of the reproducing kernel.

\begin{theorem}[Reproducing kernel: orthogonal symmetry]\label{Rep_Kernel_O} 
For any $\Delta >0$, we have
\begin{align*}
K_{{\rm O},\pi\Delta}(w,z) = \dfrac{\sin\pi\Delta(z-\overline{w})}{\pi(z-\overline{w})} - \frac{1}{(2 + \Delta)} \left(\dfrac{\sin\pi\Delta z}{\pi z}\right) \left( \dfrac{\sin\pi\Delta \overline{w} }{\pi \overline{w}}\right).
\end{align*}
\end{theorem}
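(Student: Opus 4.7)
The plan is to exploit the very simple structure of the weight $W_{\rm O}(x) = 1 + \tfrac{1}{2}\boldsymbol{\delta}_0(x)$: the inner product in $\mathcal{H}_{{\rm O}, \pi\Delta}$ decomposes as
\[
\langle F, G \rangle_{\mathcal{H}_{{\rm O}, \pi\Delta}} \,=\, \int_{\R} F(x)\,\overline{G(x)}\,\dx \,+\, \tfrac{1}{2}\, F(0)\,\overline{G(0)}.
\]
The first piece is already reproduced by the Paley--Wiener kernel $K_{{\rm U},\pi\Delta}$ from Theorem \ref{Thm3_PW}, so the correction that accounts for the point mass at the origin should be a rank-one modification in the one-dimensional direction of $k_0(z) := K_{{\rm U},\pi\Delta}(0,z) = \sin\pi\Delta z/\pi z$. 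Since $\mathcal{H}_{{\rm O}, \pi\Delta} = \mathcal{H}_{\pi\Delta}$ as sets (Section \ref{Sec2}), I would therefore work with the Hermitian ansatz
\[
\widetilde{K}(w,z) \,:=\, K_{{\rm U},\pi\Delta}(w,z) \,+\, \beta\, \overline{k_0(w)}\, k_0(z),
\]
with a real constant $\beta$ to be determined. As a function of $z$ this lies in $\mathcal{H}_{\pi\Delta}$, and hence in $\mathcal{H}_{{\rm O}, \pi\Delta}$, for every $w\in \C$.

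To pin down $\beta$ I would test the reproducing identity on an arbitrary $F \in \mathcal{H}_{\pi\Delta}$. Applying the Paley--Wiener reproducing property twice (once at $w$ and once at $0$), and using $\overline{k_0(x)} = k_0(x)$ for $x \in \R$, one obtains
\[
\int_{\R} F(x)\, \overline{\widetilde{K}(w,x)}\,\dx \,=\, F(w) \,+\, \beta\, k_0(w)\, F(0).
\]
A direct evaluation gives $\widetilde{K}(w,0) = (1+\beta\Delta)\,k_0(\overline{w})$, because $k_0(0) = \Delta$ and $K_{{\rm U},\pi\Delta}(w,0) = k_0(\overline{w})$. Adding the $\tfrac{1}{2}F(0)\,\overline{\widetilde{K}(w,0)}$ contribution from the point mass and demanding that the total equal $F(w)$ for every choice of $F$ and $w$ forces the coefficient of $F(0)\,k_0(w)$ to vanish, producing the single linear equation
\[
\beta \,+\, \tfrac{1}{2}(1+\beta\Delta) \,=\, 0 \quad \Longleftrightarrow \quad \beta \,=\, -\,\frac{1}{2+\Delta},
\]
which is exactly the constant appearing in the claimed formula.

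There is no deep obstacle here; the main subtlety is bookkeeping with the Hermitian structure, namely ensuring the rank-one correction has the conjugate-analytic dependence $\overline{k_0(w)}\,k_0(z)$ (rather than $k_0(w)\,k_0(z)$), which is what makes $\widetilde{K}(w,z) = \overline{\widetilde{K}(z,w)}$. With that in hand, the general uniqueness of reproducing kernels closes the argument, and the only additional ingredient is that $\mathcal{H}_{{\rm O}, \pi\Delta}$ is genuinely a reproducing kernel Hilbert space, which follows from the norm equivalence with $\mathcal{H}_{\pi\Delta}$ noted in Section \ref{Sec2}.
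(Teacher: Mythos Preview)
Your proposal is correct and follows essentially the same approach as the paper: the paper also writes $K_{{\rm O},\pi\Delta}(w,z) = K_{\pi\Delta}(w,z) - \dfrac{K_{\pi\Delta}(w,0)}{2+\Delta}\,K_{\pi\Delta}(0,z)$ and simply notes that this is a direct verification of the reproducing identity \eqref{20210504_11:59} for the weight $W_{\rm O}(x)=1+\tfrac12\boldsymbol{\delta}_0(x)$. Your ansatz-and-solve presentation just makes explicit the single linear equation for the rank-one coefficient that the paper leaves to the reader.
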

The remaining cases $G \in \{{\rm Sp, SO(even), SO(odd)}\}$, especially in the regime $\Delta > 1$, are technically more involved (as it is clear from statements of the forthcoming theorems). The difficulty essentially lies in the fact that one needs to work beyond the discontinuity of the characteristic function ${\bf 1}_{[-1,1]}$ in the Fourier transforms \eqref{20210506_11:40_1}.  In \textsection \ref{Sec_Rep_Kernels} we discuss a method to explicitly construct the reproducing kernels, by an application of the Fredholm alternative. One is naturally led to a problem of inverting a Fredholm operator, which can be done by solving certain differential equations (that grow in order with the parameter $\Delta$) and then solving a linear system of equations to figure out the appropriate constants. We run such a method to obtain the explicit forms when $0 < \Delta \leq 2$, since this covers the vast majority of existing examples in the literature, but the philosophy could be applied in general for any particular choice of $\Delta>2$, with a higher computational cost. A similar step in spirit, with related inherent computational difficulties, appears in the approach of Freeman and Miller \cite{F, FM} for bounds for the order of vanishing at the central point (there, the constants that need to be computed are complex numbers, while here they are in fact functions of another variable).

\newpage

\begin{theorem}[Reproducing kernel: even orthogonal symmetry]\label{Rep_Kernel_SO(even)} \hfill

\noindent {\rm (i)} For $0 <\Delta \leq 1$, we have
\begin{align*}
\begin{split}
K_{{\rm SO(even)},\pi\Delta}(w,z) = \dfrac{\sin\pi\Delta(z-\overline{w})}{\pi(z-\overline{w})} - \frac{1}{(2 + \Delta)} \left(\dfrac{\sin\pi\Delta z}{\pi z}\right) \left( \dfrac{\sin\pi\Delta \overline{w} }{\pi \overline{w}}\right).
\end{split}
\end{align*}

\smallskip

\noindent {\rm (ii)} For $1 < \Delta \leq 2$, define the constants 
\begin{align*}
\tau &:= e^{(2-\Delta)i/4}  + i e^{\Delta i/4}\,;\\
a &:= e^{\Delta i /4} + i e^{(2-\Delta)i/4} - ie^{ \Delta i/4} +  \left( \tfrac{2 - \Delta}{4}\right)\tau\,;\\
b& :=  e^{\Delta i/4} + ie^{(2-\Delta) i/4} - e^{(2- \Delta) i/4} + \left( \tfrac{2 - \Delta}{4}\right)\tau\,,
\end{align*}
and functions of a complex variable $w$,
\begin{align*}
C(w)& := \frac{-16\pi^2w^2 -4 \pi i w\,e^{2\pi iw}}{1 - 16 \pi^2w^2} \ \  ; \ \ \  F(w) := \frac{2\cos(\pi(2-\Delta)w) - 8 \pi w \sin(\pi \Delta w)}{1 - 16 \pi^2 w^2}\,;\\
G(w)&:=\frac{2\cos(\pi \Delta w)+ 4 \pi i w\, e^{- \pi \Delta i w} - e^{\pi  (2 -\Delta)i w}  }{1 - 16 \pi^2 w^2}  - \frac{\sin(\pi (2 - \Delta)w)}{2\pi w (1 - 16 \pi^2 w^2) } +  \left( \frac{2 - \Delta}{4}\right)F(w) \,;\\
A(w) & := \frac{\overline{a}\,\overline{G(\overline{w})} - \overline{b}\,G(w)}{\overline{a} b - a\overline{b}}\ \ ; \ \ B(w)  := \frac{b\,G(w) - a\,\overline{G(\overline{w})}}{\overline{a}b - a \overline{b}} \ \ ; \ \ D(w) := \frac12 \big(\tau A(w) + \overline{\tau} B(w) - F(w)\big).
\end{align*}
Then 
\begin{align}\label{20210512_11:40}
\begin{split}
 \!\!\! \! K_{{\rm SO(even)},\pi\Delta}(w,z)&  = \frac{\overline{A(w)} \big(i e^{2\pi i z} - 1\big)\big(e^{-\Delta( \pi i z + i/4)} - e^{-(2-\Delta)(\pi i z + i/4)}\big)}{2\pi i z + i/2} \\
&   \qquad  +\frac{\overline{B(w)}\big(\! -i e^{2\pi i z} - 1\big)\big(e^{-\Delta( \pi i z - i/4)} - e^{-(2-\Delta)(\pi i z - i/4)}\big)}{2\pi i z - i/2} \\
& \qquad + \frac{C(\overline{w})\big( e^{\pi \Delta i (z - \overline{w})} - e^{\pi (2-\Delta) i (z - \overline{w})}\big)   + \overline{C(w)}\big(-e^{-\pi \Delta  i (z-\overline{w})} + e^{-\pi (2 - \Delta) i (z-\overline{w})}\big)}{2\pi i (z-\overline{w})}\\
&  \qquad  + \frac{\overline{D(w)} \, \sin\big(\pi (2-\Delta) z\big)}{\pi z} + \frac{\sin\big(\pi (2-\Delta) (z - \overline{w})\big)}{\pi(z-\overline{w})}.
\end{split}
\end{align}
At the points $w =  \pm1/4\pi$, the formula \eqref{20210512_11:40} should be interpreted as the appropriate limit since $w \mapsto K_{{\rm SO(even)},\pi\Delta}(\overline{w},z)$ is entire.
\end{theorem}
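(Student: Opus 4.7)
The plan is to realize the reproducing kernel via the Fredholm-alternative route indicated in Section \ref{Sec_Rep_Kernels}. Writing $h(x)=\tfrac{\sin 2\pi x}{2\pi x}$ and using
\[
\langle F,G\rangle_{\mathcal{H}_{{\rm SO(even)},\pi\Delta}} = \langle F,G\rangle_{L^2(\mathbb{R})} + \langle F, Gh\rangle_{L^2(\mathbb{R})},
\]
the identity $F(w)=\langle F,K(w,\cdot)\rangle_{\mathcal{H}_{{\rm SO(even)},\pi\Delta}}$ for every $F\in\mathcal{H}_{{\rm SO(even)},\pi\Delta}$, combined with the Paley--Wiener reproducing identity of Theorem \ref{Thm3_PW}, is equivalent to the operator equation $(I+PM_h)K(w,\cdot)=K_{{\rm U},\pi\Delta}(w,\cdot)$, where $P$ is the $L^2(\mathbb{R})$-projection onto $\mathcal{H}_{\pi\Delta}$ and $M_h$ is multiplication by $h$. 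Taking Fourier transforms (with $\widehat{h}(y)=\tfrac12\mathbf{1}_{[-1,1]}(y)$), the function $\phi_w(y):=\widehat{K(w,\cdot)}(y)$, supported in $[-\Delta/2,\Delta/2]$, must satisfy
\[
\phi_w(y)+\tfrac12\!\int_{[-\Delta/2,\,\Delta/2]\cap[y-1,y+1]}\!\!\phi_w(u)\,du = e^{-2\pi i y \overline w}, \qquad y\in[-\Delta/2,\Delta/2].
\]
For part (i), $0<\Delta\le 1$ makes the integration interval equal to the whole $[-\Delta/2,\Delta/2]$, the integral collapses to a constant $c(w)$, and the self-consistency $c(w)=\tfrac12\int\phi_w$ yields $c(w)=\tfrac{1}{2+\Delta}\tfrac{\sin\pi\Delta\overline w}{\pi\overline w}$; inverse Fourier transforming then recovers exactly the formula of Theorem \ref{Rep_Kernel_O}.

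For part (ii), with $1<\Delta\le 2$, I would partition $[-\Delta/2,\Delta/2]$ into a middle region $I_M=[\Delta/2-1,\,1-\Delta/2]$ on which $[y-1,y+1]$ still contains the entire support, together with outer regions $I_L=[-\Delta/2,\,\Delta/2-1]$ and $I_R=[1-\Delta/2,\,\Delta/2]$ on which the window is truncated on one side. On $I_M$ one reads off $\phi_w(y)=e^{-2\pi i y\overline w}-c(w)$ for the scalar $c(w)=\tfrac12\int_{-\Delta/2}^{\Delta/2}\phi_w$. On $I_L$ and $I_R$ differentiating the equation in $y,z$ produces the coupled first-order system
\[
(\phi^L)'(y)+\tfrac12\phi^R(y+1)=-2\pi i\overline w\, e^{-2\pi i y\overline w}, \qquad (\phi^R)'(z)-\tfrac12\phi^L(z-1)=-2\pi i\overline w\, e^{-2\pi i z\overline w}.
\]
The shift $z=y+1$ puts both equations on the common interval $I_L$, and the vector $(\phi^L(y),\phi^R(y+1))$ satisfies a $2\times 2$ constant-coefficient linear ODE whose generator has eigenvalues $\pm i/2$. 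The homogeneous flow $e^{\pm iy/2}$ evaluated at the endpoints $y=-\Delta/2$ and $y=\Delta/2-1$ produces the phases $e^{\pm\Delta i/4}$ and $e^{\pm(2-\Delta)i/4}$, which assemble into the constants $\tau,a,b$ of the theorem, while an ansatz $(\alpha,\beta)e^{-2\pi i y\overline w}$ furnishes the particular solution whose Fourier-theoretic amplitudes are packaged as $C(\overline w)$ and $F(\overline w)$.

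To pin down the two free homogeneous constants I would impose continuity of $\phi_w$ at $y=\pm(\Delta/2-1)$ with the middle-region formula, together with the self-consistency identity for $c(w)$. This produces a $2\times 2$ linear system with determinant $\overline{a}b-a\overline{b}$, whose solutions are the $A(w),B(w)$ of the theorem; $D(w)$ then encodes the combination forced by the $c(w)$-equation through the auxiliary quantity $G(w)$. The kernel is finally assembled via
\[
K_{{\rm SO(even)},\pi\Delta}(w,z)=\int_{-\Delta/2}^{\Delta/2}\phi_w(y)\,e^{2\pi i y z}\,dy,
\]
and evaluating the three regional integrals in closed form produces the five-line expression in \eqref{20210512_11:40}. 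Correctness can be verified either by plugging the candidate back into the Fredholm integral equation, or by testing the reproducing identity against the total set $\{\,x\mapsto e^{-2\pi i \xi x}\mathbf{1}_{[-\Delta/2,\Delta/2]}(\xi)\,\}_\xi$; the apparent singularities at $\overline w=\pm 1/(4\pi)$ in $C,F,G$ are removable, as they must be since $w\mapsto K(\overline w,z)$ is entire by the general RKHS theory.

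The main obstacle is not analytical but combinatorial: the ODE and the $2\times 2$ constant-determination system are elementary, yet regrouping the three regional inverse Fourier transforms into the symmetric/antisymmetric pieces of \eqref{20210512_11:40} takes real care. In particular, the clean cross-term $\sin(\pi(2-\Delta)(z-\overline w))/(\pi(z-\overline w))$ only emerges after a nontrivial cancellation among contributions from $I_L$, $I_M$, and $I_R$. Extending the method to $\Delta>2$ would, as the paper remarks, raise the order of the ODE and enlarge the final linear system correspondingly.
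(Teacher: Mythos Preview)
Your proposal is correct and follows essentially the same Fredholm-equation route as the paper: set up the integral equation for the Fourier transform of the kernel, differentiate to obtain an ODE on each subinterval (you package the outer pieces as a first-order $2\times 2$ system with eigenvalues $\pm i/2$, while the paper converts to the equivalent second-order scalar ODE $u''+\tfrac14 u=\ldots$), and then solve a $3\times 3$ linear system for the constants. Your choice of determining conditions (two continuity matchings plus the self-consistency identity for $c(w)$) differs from the paper's choice (evaluation of the original integral equation at $y=\pm\Delta/2$ plus one continuity condition), but the paper itself notes that several equivalent sets of conditions are available, and both reduce to the same $2\times 2$ system with determinant $\overline a b-a\overline b$.
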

\noindent {\sc Remark:} When $\Delta =2$, note that the last line of \eqref{20210512_11:40} disappears. In this case, the auxiliary functions $D$ and $F$, and the constant $\tau$, play no role. A similar simplification happens when $\Delta =2$ in the next two theorems, for symplectic and odd orthogonal symmetries.

\begin{theorem}[Reproducing kernel: symplectic symmetry]\label{Thm_symplectic} \hfill

\noindent {\rm (i)} For $0 <\Delta \leq 1$ we have
\begin{align*}
K_{{\rm Sp},\pi\Delta}(w,z)= \dfrac{\sin\pi\Delta(z-\overline{w})}{\pi(z-\overline{w})} + \frac{1}{(2 - \Delta)} \left(\dfrac{\sin\pi\Delta z}{\pi z}\right) \left( \dfrac{\sin\pi\Delta \overline{w} }{\pi \overline{w}}\right).
\end{align*} 

\smallskip

\noindent {\rm (ii)} For $1 < \Delta \leq 2$, define the constants
\begin{align*}
\tau &:= e^{(2-\Delta)i/4}  - i e^{\Delta i/4}\,;\\
a &:= e^{\Delta i /4} - i e^{(2-\Delta)i/4} + ie^{ \Delta i/4} -\left( \tfrac{2 - \Delta}{4}\right)\tau\,;\\
b& :=  e^{\Delta i/4} - ie^{(2-\Delta) i/4} - e^{(2- \Delta) i/4} -\left( \tfrac{2 - \Delta}{4}\right)\tau\,,
\end{align*}
and functions of a complex variable $w$,
\begin{align*}
C(w)& := \frac{-16\pi^2w^2 +4 \pi i w\,e^{2\pi iw}}{1 - 16 \pi^2w^2} \ \  ; \ \ \  F(w) := \frac{2\cos(\pi(2-\Delta)w) + 8 \pi w \sin(\pi \Delta w)}{1 - 16 \pi^2 w^2}\,;\\
G(w)&:=\frac{2\cos(\pi \Delta w)- 4 \pi i w \,e^{- \pi \Delta i w} - e^{\pi  (2 -\Delta)i w}  }{1 - 16 \pi^2 w^2}  + \frac{\sin(\pi (2 - \Delta)w)}{2\pi w (1 - 16 \pi^2 w^2) } -  \left( \frac{2 - \Delta}{4}\right)F(w) \,;\\
A(w) & := \frac{\overline{a}\,\overline{G(\overline{w})} - \overline{b}\,G(w)}{\overline{a} b - a\overline{b}}\ \ ; \ \ B(w)  := \frac{b\,G(w) - a\,\overline{G(\overline{w})}}{\overline{a}b - a \overline{b}} \ \ ; \ \ D(w) := \frac12 \big(\tau A(w) + \overline{\tau} B(w) - F(w)\big).
\end{align*}
Then 
\begin{align}\label{20210825_10:58am}
\begin{split}
 \!\!\! \! K_{{\rm Sp},\pi\Delta}(w,z)&  = \frac{\overline{A(w)} \big(-i e^{2\pi i z} - 1\big)\big(e^{-\Delta( \pi i z + i/4)} - e^{-(2-\Delta)(\pi i z + i/4)}\big)}{2\pi i z + i/2} \\
&   \qquad  +\frac{\overline{B(w)}\big(i e^{2\pi i z} - 1\big)\big(e^{-\Delta( \pi i z - i/4)} - e^{-(2-\Delta)(\pi i z - i/4)}\big)}{2\pi i z - i/2} \\
& \qquad + \frac{C(\overline{w})\big( e^{\pi \Delta i (z - \overline{w})} - e^{\pi (2-\Delta) i (z - \overline{w})}\big)   + \overline{C(w)}\big(-e^{-\pi \Delta  i (z-\overline{w})} + e^{-\pi (2 - \Delta) i (z-\overline{w})}\big)}{2\pi i (z-\overline{w})}\\
&  \qquad  + \frac{\overline{D(w)} \, \sin\big(\pi (2-\Delta) z\big)}{\pi z} + \frac{\sin\big(\pi (2-\Delta) (z - \overline{w})\big)}{\pi(z-\overline{w})}.
\end{split}
\end{align}
At the points $w =  \pm1/4\pi$, the formula \eqref{20210825_10:58am} should be interpreted as the appropriate limit since $w \mapsto K_{{\rm Sp},\pi\Delta}(\overline{w},z)$ is entire.
\end{theorem}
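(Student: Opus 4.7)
The plan is to realize $K(w,\cdot) := K_{{\rm Sp},\pi\Delta}(w,\cdot)$ as the unique element of the Paley--Wiener space $\mathcal{H}_{\pi\Delta}$ satisfying the reproducing identity against the weight $W_{\rm Sp}$, convert this requirement into a Fredholm integral equation on the Fourier side, solve the associated ODE, and then pin down the unknown constants by matching boundary conditions. Since $\mathcal{H}_{{\rm Sp},\pi\Delta}=\mathcal{H}_{\pi\Delta}$ as sets, the reproducing property is equivalent to $P_{\pi\Delta}\bigl(W_{\rm Sp}\,K(w,\cdot)\bigr) = L(w,\cdot)$, where $L(w,z)=\sin\pi\Delta(z-\overline w)/\pi(z-\overline w)$ and $P_{\pi\Delta}$ denotes the orthogonal projection onto $\mathcal{H}_{\pi\Delta}$. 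Passing to Fourier transforms with $k=\widehat{K(w,\cdot)}$, supported in $[-\tfrac{\Delta}{2},\tfrac{\Delta}{2}]$, and using $\widehat{W_{\rm Sp}}(y)=\boldsymbol{\delta}_0(y)-\tfrac12\mathbf{1}_{[-1,1]}(y)$, this becomes the integral equation
\begin{equation*}
k(y) \,-\, \tfrac{1}{2}\!\int_{[y-1,y+1]\cap[-\Delta/2,\Delta/2]}\!\! k(t)\,\d t \;=\; e^{-2\pi i \overline{w}\, y}\,,\qquad y\in\bigl[-\tfrac{\Delta}{2},\tfrac{\Delta}{2}\bigr].
\end{equation*}

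For $0<\Delta\leq 1$, the interval $[y-1,y+1]$ contains $[-\tfrac{\Delta}{2},\tfrac{\Delta}{2}]$ for every $y$, so the integral is a constant $c$ and $k(y)=c+e^{-2\pi i\overline w\, y}$; self-consistency gives $c=(2-\Delta)^{-1}\sin(\pi\Delta\overline w)/(\pi\overline w)$, and the inversion $K(w,z)=\int_{-\Delta/2}^{\Delta/2} k(y)\,e^{2\pi i zy}\,\d y$ produces (i). For $1<\Delta\leq 2$ I would partition $[-\tfrac{\Delta}{2},\tfrac{\Delta}{2}]$ into three pieces $A=[-\tfrac{\Delta}{2},\tfrac{\Delta}{2}-1]$, $B=[\tfrac{\Delta}{2}-1,1-\tfrac{\Delta}{2}]$, $C=[1-\tfrac{\Delta}{2},\tfrac{\Delta}{2}]$, on which the integration range in the equation reduces to $[-\tfrac{\Delta}{2},y+1]$, all of $[-\tfrac{\Delta}{2},\tfrac{\Delta}{2}]$, and $[y-1,\tfrac{\Delta}{2}]$ respectively. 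On $B$ the equation forces $k(y)=C_0+e^{-2\pi i\overline w y}$ for a constant $C_0$. On $A$ and $C$, differentiating yields the shift relations $k'(y)=\tfrac12 k(y+1)-2\pi i\overline w\, e^{-2\pi i\overline w y}$ and $k'(y)=-\tfrac12 k(y-1)-2\pi i\overline w\, e^{-2\pi i\overline w y}$ respectively; since $y\mapsto y+1$ maps $A$ onto $C$, a second differentiation decouples the system into a second-order constant-coefficient ODE $k''(y)+\tfrac14 k(y)=R(y)$ with $R$ an explicit multiple of $e^{-2\pi i\overline w y}$, whose general solution is a linear combination of the homogeneous modes $e^{\pm iy/2}$ and a particular solution proportional to $(1-16\pi^2\overline w^2)^{-1}e^{-2\pi i\overline w y}$. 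Together with $C_0$ this produces five complex unknowns.

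The unknowns are fixed by five linear compatibility conditions: $k$ must match at the interior junctions $y=\pm(1-\tfrac{\Delta}{2})$ between $B$ and $A/C$; the first-order shift relations (which were integrated to reach the second-order ODE) must hold at a point in each region; and the defining identity $C_0=\tfrac12\int_{-\Delta/2}^{\Delta/2} k(t)\,\d t$ must be satisfied. The system collapses to a $2\times 2$ linear system whose solution is packaged in the theorem statement as the functions $A(w)$ and $B(w)$, while $\tau,a,b,C(w),D(w),F(w),G(w)$ serve as the natural bookkeeping for the boundary values and particular solutions of the ODE; the sign differences relative to the even-orthogonal kernel trace directly to the sign flip $\widehat{W_{\rm Sp}}=\boldsymbol{\delta}_0-\tfrac12\mathbf{1}_{[-1,1]}$ versus $\widehat{W_{\rm SO(even)}}=\boldsymbol{\delta}_0+\tfrac12\mathbf{1}_{[-1,1]}$. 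Inverse Fourier transforming $k$ piece by piece then expresses $K(w,z)$ as a sum of elementary exponential integrals over $A,B,C$, and these collect precisely into the five summands of \eqref{20210825_10:58am}.

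The main obstacle is computational rather than conceptual: assembling the $2\times 2$ system, solving it symbolically, and simplifying the answer to match the displayed closed form is a lengthy and delicate algebraic exercise. Uniqueness is not in doubt, since the Fredholm operator associated with the integral equation is invertible on $L^2[-\tfrac\Delta2,\tfrac\Delta2]$ because $\mathcal{H}_{{\rm Sp},\pi\Delta}$ is a genuine reproducing kernel Hilbert space (as verified in \S\ref{Sub5_Prelim}); hence one may equivalently \emph{define} $K(w,z)$ by \eqref{20210825_10:58am} and verify that the corresponding $k$ satisfies the integral equation on each of $A,B,C$, which is a mechanical check. Finally, the removable-singularity interpretation at $w=\pm 1/4\pi$ reflects the cancellation of the denominators $1-16\pi^2 w^2$ in $C(w),F(w),G(w)$ against matching zeros of their numerators, so that the map $w\mapsto K_{{\rm Sp},\pi\Delta}(\overline w,z)$ extends to an entire function as required by the reproducing kernel structure.
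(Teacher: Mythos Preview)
Your proposal is correct and follows essentially the same route as the paper: reduce the reproducing identity to the Fredholm integral equation on the Fourier side (this is the paper's Lemma~\ref{Lem8}), split $[-\tfrac{\Delta}{2},\tfrac{\Delta}{2}]$ into three subintervals, differentiate to obtain the second-order ODE $k''+\tfrac14 k = R$ on the outer pieces and a constant on the middle piece, and then determine the five constants by the shift relations and continuity/evaluation conditions, collapsing to the $2\times2$ system that defines $A(w),B(w)$. The only cosmetic difference is that the paper works with $u_w^-$ solving the equation with right-hand side $e^{-2\pi i w y}$ and then sets $K(w,z)=\overline{k_w^-(\overline z)}$, whereas you work directly with $k=\widehat{K(w,\cdot)}$ and right-hand side $e^{-2\pi i \overline w y}$; these are equivalent bookkeepings of the same computation.
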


\begin{theorem}[Reproducing kernel: odd orthogonal symmetry] \label{Thm_odd_orthogonal}
\hfill

\noindent {\rm (i)} For $0 <\Delta \leq 1$, we have
\begin{align*}
\begin{split}
K_{{\rm SO(odd)},\pi\Delta}(w,z) = \dfrac{\sin\pi\Delta(z-\overline{w})}{\pi(z-\overline{w})} - \frac{1}{(2 + \Delta)} \left(\dfrac{\sin\pi\Delta z}{\pi z}\right) \left( \dfrac{\sin\pi\Delta \overline{w} }{\pi \overline{w}}\right).
\end{split}
\end{align*}

\smallskip

\noindent {\rm (ii)} For any $\Delta >0$, we have
\begin{align*}
K_{{\rm SO(odd)},\pi\Delta}(w,z) = K_{{\rm Sp},\pi\Delta}(w,z) - \frac{K_{{\rm Sp},\pi\Delta}(w,0)}{1 + K_{{\rm Sp},\pi\Delta}(0,0)}K_{{\rm Sp},\pi\Delta}(0,z).
\end{align*}
\end{theorem}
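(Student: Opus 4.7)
The core of the argument is part (ii); part (i) will follow from it by direct substitution. The key observation, read off from the distributional Fourier transforms in \eqref{20210506_11:40_1}, is that
\begin{equation*}
\widehat{W}_{{\rm SO(odd)}} - \widehat{W}_{{\rm Sp}} = 1,
\end{equation*}
which by Fourier inversion gives the distributional equality $W_{{\rm SO(odd)}} = W_{{\rm Sp}} + \boldsymbol{\delta}_0$. Since the spaces $\mathcal{H}_{G,\pi\Delta}$ all coincide with $\mathcal{H}_{\pi\Delta}$ as sets (per \S\ref{Sec2}) and evaluation at $0$ is well defined on $\mathcal{H}_{\pi\Delta}$, this translates into the clean inner product decomposition
\begin{equation*}
\langle F,G\rangle_{\mathcal{H}_{{\rm SO(odd)},\pi\Delta}} \,=\, \langle F,G\rangle_{\mathcal{H}_{{\rm Sp},\pi\Delta}} + F(0)\overline{G(0)}
\end{equation*}
valid for all $F,G \in \mathcal{H}_{\pi\Delta}$.

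For part (ii), the plan is to exhibit the $SO(odd)$ reproducing kernel as a rank-one perturbation of $K_{{\rm Sp},\pi\Delta}$. Setting
\begin{equation*}
\widetilde{K}(w,z) \,:=\, K_{{\rm Sp},\pi\Delta}(w,z) - \alpha(w)\,K_{{\rm Sp},\pi\Delta}(0,z),
\end{equation*}
with $\alpha(w)$ to be determined, I would pair an arbitrary $F \in \mathcal{H}_{\pi\Delta}$ with $\widetilde{K}(w,\cdot)$ in the $SO(odd)$ inner product, and use the reproducing property of $K_{{\rm Sp},\pi\Delta}$ together with the Hermitian symmetry $\overline{K_{{\rm Sp},\pi\Delta}(a,b)} = K_{{\rm Sp},\pi\Delta}(b,a)$ to obtain
\begin{equation*}
\langle F, \widetilde{K}(w,\cdot)\rangle_{\mathcal{H}_{{\rm SO(odd)},\pi\Delta}} \,=\, F(w) + F(0)\Bigl(K_{{\rm Sp},\pi\Delta}(0,w) - \overline{\alpha(w)}\bigl(1+K_{{\rm Sp},\pi\Delta}(0,0)\bigr)\Bigr).
\end{equation*}
Imposing that this equal $F(w)$ for all $F$ (e.g.\ any $F$ with $F(0)\neq 0$) forces $\overline{\alpha(w)} = K_{{\rm Sp},\pi\Delta}(0,w)/(1+K_{{\rm Sp},\pi\Delta}(0,0))$, i.e.\ $\alpha(w) = K_{{\rm Sp},\pi\Delta}(w,0)/(1+K_{{\rm Sp},\pi\Delta}(0,0))$ after another application of Hermitian symmetry. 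The denominator is nonzero because $K_{{\rm Sp},\pi\Delta}(0,0) > 0$, by \eqref{20210512_09:33} and the strict positivity noted in \S\ref{Sub5_Prelim}. Since $\widetilde{K}(w,\cdot)\in\mathcal{H}_{\pi\Delta}$ is a linear combination of Paley--Wiener functions and the reproducing kernel is unique, this is the formula in (ii).

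For part (i), I would simply substitute the explicit formula for $K_{{\rm Sp},\pi\Delta}$ from Theorem \ref{Thm_symplectic}(i) into the identity just proved. Using $\lim_{x\to 0}(\sin\pi\Delta x)/(\pi x) = \Delta$, evaluation at zero yields $K_{{\rm Sp},\pi\Delta}(0,z) = \tfrac{2}{2-\Delta}\tfrac{\sin\pi\Delta z}{\pi z}$ and $1 + K_{{\rm Sp},\pi\Delta}(0,0) = \tfrac{2+\Delta}{2-\Delta}$; the coefficient of $\bigl(\tfrac{\sin\pi\Delta z}{\pi z}\bigr)\bigl(\tfrac{\sin\pi\Delta\overline{w}}{\pi\overline{w}}\bigr)$ in the resulting expression collapses from $\tfrac{1}{2-\Delta} - \tfrac{4}{(2-\Delta)(2+\Delta)}$ to $-\tfrac{1}{2+\Delta}$, matching the claimed expression. (An alternative check: $\widehat{W}_{{\rm SO(odd)}}$ and $\widehat{W}_{\rm O}$ agree on $[-1,1]$, so the two inner products coincide on $\mathcal{H}_{\pi\Delta}$ when $\Delta\leq 1$, and the result must agree with Theorem \ref{Rep_Kernel_O}.) The whole approach is essentially algebraic bookkeeping once the distributional identity $W_{{\rm SO(odd)}} = W_{{\rm Sp}} + \boldsymbol{\delta}_0$ is recognized; the only subtlety to watch is careful handling of complex conjugation when invoking the reproducing and Hermitian properties of $K_{{\rm Sp},\pi\Delta}$.
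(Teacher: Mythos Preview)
Your proposal is correct and follows essentially the same approach as the paper: for part (ii) you use the key identity $W_{{\rm SO(odd)}} = W_{{\rm Sp}} + \boldsymbol{\delta}_0$ and verify the rank-one perturbation formula directly against the reproducing property, which is precisely what the paper means by ``one can check directly from the definition \eqref{20210504_11:59}.'' The only cosmetic difference is in part (i): the paper's primary argument is your ``alternative check'' (that $\widehat{W}_{{\rm SO(odd)}}$ and $\widehat{W}_{\rm O}$ agree on $[-1,1]$, so $K_{{\rm SO(odd)},\pi\Delta} = K_{{\rm O},\pi\Delta}$ for $\Delta\le 1$), whereas you obtain it by substituting Theorem \ref{Thm_symplectic}(i) into (ii)---both routes are valid and you explicitly mention both.
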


For completeness, let us record here the relevant input for the right-hand side of \eqref{20210512_09:59}.

\begin{corollary}[Reproducing kernels: special values at the origin] \label{Cor_8_00}
\hfill

\noindent {\rm (i)} \textup{(Unitary and orthogonal symmetry)} For any $\Delta>0$, we have
\[
K_{{\rm U},\pi\Delta}(0,0)= \Delta \quad \text{ and } \quad K_{{\rm O},\pi\Delta}(0,0)= \frac{2\Delta}{2 + \Delta}.
\]
\noindent {\rm (ii)} \textup{(Even orthogonal symmetry)} We have
\[
K_{{\rm SO(even)},\pi\Delta}(0,0) = \left\{ \begin{array}{lcl}
 2\Delta / (2 + \Delta), &\mbox{ if $0<\Delta \le 1$,}
\\[\bigskipamount]
   2 - \dfrac{ 4 \cos\!\big( \frac{\Delta-1}{2}  \big) }{4  + 4 \sin\!\big( \frac{\Delta-1}{2} \big) - \Delta  \cos\!\big( \frac{\Delta-1}{2} \big)} , &\mbox{ if $1<\Delta \le 2$.}
       \end{array} \right.
\]

\noindent {\rm (iii)} \textup{(Symplectic symmetry)} We have
\[
K_{{\rm Sp},\pi\Delta}(0,0) = \left\{ \begin{array}{lcl}
2\Delta / (2 - \Delta), &\mbox{ if $0<\Delta \le 1$,} 
\\[\bigskipamount]
  \dfrac{ 4 \cos\!\big( \frac{\Delta-1}{2}  \big) }{4 - 4 \sin\!\big( \frac{\Delta-1}{2} \big) - \big(4 - \Delta \big) \cos\!\big( \frac{\Delta-1}{2} \big)} -  2, &\mbox{ if $1<\Delta \le 2$.}
       \end{array} \right.
\]

\noindent {\rm (iv)} \textup{(Odd orthogonal symmetry)} We have
\[
K_{{\rm SO(odd)},\pi\Delta}(0,0) = \left\{ \begin{array}{lcl}
2\Delta/(2 + \Delta), &\mbox{ if $0<\Delta \le 1$,} 
\\[\bigskipamount]
  2 + \dfrac{ 4 \cos\!\big( \frac{\Delta-1}{2}  \big) }{4 -4 \sin\!\big( \frac{\Delta-1}{2} \big) - \big(8 - \Delta \big) \cos\!\big( \frac{\Delta-1}{2} \big)}, &\mbox{ if $1<\Delta \le 2$.}
       \end{array} \right.
\]
\end{corollary}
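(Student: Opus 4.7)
The corollary is a pure substitution exercise: each value $K_{G,\pi\Delta}(0,0)$ is obtained by setting $w=z=0$ in the closed-form reproducing kernel supplied by the corresponding result among Theorems \ref{Thm3_PW}--\ref{Thm_odd_orthogonal}, and simplifying. No new tools are required, only bookkeeping, with the complexity concentrated in the piecewise second branches of (ii) and (iii). I would organize the verification in three stages.

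For part (i) and the $0<\Delta\leq 1$ sub-cases of parts (ii), (iii), (iv), each kernel has the common shape
$$K(w,z) \ = \ \frac{\sin\pi\Delta(z-\overline{w})}{\pi(z-\overline{w})} \ + \ c \cdot \frac{\sin\pi\Delta z}{\pi z} \cdot \frac{\sin\pi\Delta\overline{w}}{\pi\overline{w}}$$
for some explicit constant $c\in\{0,\,\pm(2\pm\Delta)^{-1}\}$. Since each of the three factors tends to $\Delta$ as its argument approaches $0$, evaluation at $w=z=0$ gives $\Delta + c\Delta^2$, which simplifies to the stated expression in each case. For example, $c=-(2+\Delta)^{-1}$ for the orthogonal and even-orthogonal symmetries yields $\frac{\Delta(2+\Delta)-\Delta^2}{2+\Delta}=\frac{2\Delta}{2+\Delta}$, and $c=(2-\Delta)^{-1}$ for symplectic symmetry gives $\frac{2\Delta}{2-\Delta}$.

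For the regime $1<\Delta\leq 2$ in parts (ii) and (iii), I would substitute $w=z=0$ directly into the long formulas \eqref{20210512_11:40} and \eqref{20210825_10:58am}. The essential numerical inputs are $C(0)=0$, $F(0)=2$, and $G(0)=1$; the last value follows from the Taylor expansion $\sin(\pi(2-\Delta)w)/(2\pi w)\to(2-\Delta)/2$ at $w=0$, which cancels against the $\tfrac{2-\Delta}{4}F(0)$ correction in the definition of $G$. The third line of each kernel is indeterminate of form $0/0$, which I would resolve by expanding $C(\overline{w})=\mp 4\pi i w + O(w^2)$ and noting that the bracketed exponentials $e^{\pi\Delta i u}-e^{\pi(2-\Delta)i u}$ vanish to first order in $u=z-\overline{w}$, so a single differentiation produces a finite limit. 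The first two lines then contribute complex-exponential expressions in $e^{\pm i\Delta/4}$ and $e^{\pm i(2-\Delta)/4}$ weighted by $\overline{A(0)}$ and $\overline{B(0)}$, while the fourth line contributes $\overline{D(0)}(2-\Delta)+(2-\Delta)$. After summing, the change of variables $\alpha:=\Delta/4$, $\beta:=(2-\Delta)/4$, which satisfies $\alpha+\beta=\tfrac12$ and $\alpha-\beta=(\Delta-1)/2$, allows every exponential to collapse via product-to-sum identities into $\cos\!\big(\tfrac{\Delta-1}{2}\big)$ and $\sin\!\big(\tfrac{\Delta-1}{2}\big)$, yielding the closed forms displayed in (ii) and (iii).

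For part (iv), no further computation is needed. Setting $w=z=0$ in Theorem \ref{Thm_odd_orthogonal}(ii) and simplifying gives
$$K_{{\rm SO(odd)},\pi\Delta}(0,0) \ = \ \frac{K_{{\rm Sp},\pi\Delta}(0,0)}{1+K_{{\rm Sp},\pi\Delta}(0,0)},$$
and substituting the two expressions for $K_{{\rm Sp},\pi\Delta}(0,0)$ from part (iii) on the sub-intervals $0<\Delta\leq 1$ and $1<\Delta\leq 2$ produces the formulas in (iv); in particular, the $\Delta\leq 1$ case recovers $\frac{2\Delta}{2+\Delta}$, in agreement with Theorem \ref{Thm_odd_orthogonal}(i). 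The principal obstacle in this plan is the long calculation for $1<\Delta\leq 2$ in (ii)--(iii): the quantities $A(0), B(0), D(0)$ are ratios with complex denominator $\overline{a}b-a\overline{b}=2i\,\textup{Im}(\overline{a}b)$, and one must maintain careful ledgers of real and imaginary parts throughout the substitution, since a single sign error in $\tau$, $a$, or $b$ would destroy the cancellation of imaginary cross-terms that produces the real trigonometric answers stated in the corollary.
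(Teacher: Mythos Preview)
Your proposal is correct and follows exactly the route the paper intends: Corollary \ref{Cor_8_00} is stated without proof because it is a direct evaluation of the explicit kernels in Theorems \ref{Thm3_PW}--\ref{Thm_odd_orthogonal} at $w=z=0$, and your three-stage organization (the rank-one perturbation cases, the long $1<\Delta\le 2$ formulas, and the reduction of ${\rm SO(odd)}$ to ${\rm Sp}$ via Theorem \ref{Thm_odd_orthogonal}(ii)) is the natural one. One small simplification: the third line of \eqref{20210512_11:40} and \eqref{20210825_10:58am} is not genuinely indeterminate---since $C(0)=0$, setting $w=0$ first kills that line identically in $z$, so no Taylor expansion of $C$ is needed and its contribution to $K(0,0)$ is simply $0$.
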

The corresponding inputs from Theorems \ref{Thm3_PW} -- \ref{Thm_odd_orthogonal} needed for the right-hand side of \eqref{20210503_09:43} are computable for any fixed $t >0$ (but the expressions do not dramatically simplify to justify stating them in a new corollary). We now illustrate the reach of our framework by highlighting a few examples and plotting the corresponding lower bound for the proportion of non-vanishing at low-lying heights implied by \eqref{20210503_09:43}.

\subsection{Some examples and plots of non-vanishing}\label{SSec:Examples} There are many examples of families of $L$-functions in the literature which are amenable to our framework. A non-exhaustive list includes, for instance, the works 
\cite{AM, AB,BZ,BF,CK,CS,DPR,Du,DM,FI,GZ,G,HB,HM,HR,ILS,KS1,MP,OS,RR,R,Ru,SST,Y}.
In order to keep the exposition short, the reader is invited to consult the corresponding excerpts from these works for the precise definitions and technicalities, and we only briefly comment on a few of these examples below. Recall that the relevant input for our purposes is that estimate \eqref{20210430_11:41} holds for even Schwartz functions $\phi:\R \to \R$ with ${\rm supp} (\widehat{\phi}) \subset (-\Delta, \Delta)$ for some $\Delta >0$. The lower bound for the proportion of non-vanishing at a low-lying height $t>0$ implied by \eqref{20210503_09:43} is $\mc{P} = \mc{P}_{G, \pi \Delta}$ given by
\begin{align*}
\liminf_{Q \to\infty} \ \frac{1}{|\mathcal{F}(Q)|} \cdot \#\Big\{ L\big(\tfrac{1}{2}+\tfrac{2\pi i t}{\log c_f}\big) \ne 0  : f \in \mathcal{F}(Q) \Big\} \, \ge \, \mc{P}(t)  \, :=  \, 1 \, - \,\frac{1}{K(t,t) + |K(t,-t)|},
\end{align*}
where $K = K_{G, \pi\Delta}$.

\subsubsection{Unitary examples} The case of primitive Dirichlet $L$-functions modulo a prime $q$, presented in Theorem \ref{Dirichlet1}, relies on the work of Hughes and Rudnick \cite[Theorem 3.1]{HR}, with unitary symmetry ($G= {\rm U}$) and $\Delta =2$. The corresponding plot of the lower bound for the proportion of non-vanishing implied by Theorem \ref{Dirichlet1}, which follows from \eqref{20210503_09:43}, is given in Figure \ref{figure1}. In \cite[Theorem 1]{DPR}, Drappeau, Pratt, and Radziwi\l\l \, show that the larger family of primitive Dirichlet $L$-functions modulo $q$ with an additional smoothed average on the parameter $q$ in an interval ($q$ running over all integers in the interval) is a unitary family of $L$-functions  with extended support corresponding to $\Delta = 2 + \frac{50}{1093}$. This result can be used in conjunction with Theorems \ref{Thm2_Non-vanishing} and \ref{Thm3_PW} to give improved estimates for the proportion of non-vanishing of $L$-functions in this family at low-lying heights on the critical line (we discuss how to deal with a smooth averaging in a remark below).

\subsubsection{\textup{(}Even and odd\textup{)}~orthogonal examples}\label{mf} As examples of orthogonal families of automorphic $L$-functions, we highlight the ones described by Iwaniec, Luo, and Sarnak \cite[Theorems 1.1--1.3]{ILS}. Following the notation in \cite{ILS}, for $k$ even and $N$ square-free, let $H_k^\star(N)$ denote the set of holomorphic cusp forms $f$ of weight $k$ which are newforms of level $N$, and let $L(s,f)$ be the associated $L$-function. Using the sign of the functional equation, this set naturally decomposes into two subsets, $H_k^+(N)$ and $H_k^-(N)$, of forms $f$ corresponding to $\varepsilon_f = +1$ and $\varepsilon_f = - 1$, respectively. As the level $N \to \infty$ over square-free integers, \cite[Theorem 1.1]{ILS} gives that the families $H_k^\star(N)$, $H_k^+(N)$, and $H_k^-(N)$ have symmetry types $G = {\rm O}$, $G = {\rm SO}({\rm even})$, and $G =  {\rm SO}(\rm{odd})$, respectively, with support $\Delta=2$.  If one considers the same families and lets $kN \to \infty$, then \cite[Theorem 1.2]{ILS} establishes the same symmetry types but with $\Delta=1$. Averaging over the weight $k$, they define the larger families
\[
\mc{M}^\star(K,N)=\bigcup_{k\le K}H_k^\star(N), \quad \mc{M}^+(K,N)=\bigcup_{k\le K}H_k^+(N), \quad \text{and} \quad \mc{M}^-(K,N)=\bigcup_{k\le K}H_k^-(N),
\]
and in \cite[Theorem 1.3]{ILS} they establish that as $KN \to \infty$ these families of $L$-functions have symmetry types $G = {\rm O}$, $G = {\rm SO}({\rm even})$, and $G =  {\rm SO}(\rm{odd})$, respectively, with the larger support $\Delta=2$. Plots for the proportion of non-vanishing implied by \eqref{20210503_09:43} for \textup{(}even and odd\textup{)}~orthogonal families of $L$-functions when $\Delta=1$ and $\Delta=2$ are given in Figures \ref{figure_O}, \ref{figure_SO(even)}, and \ref{figure_SO(odd)}. In particular, Figure \ref{figure_SO(odd)} illustrates a nice example of so-called zero repulsion. For every $f \in H_k^-(N)$, the sign of functional equation implies that $L(\frac{1}{2},f)=0$. However, when $\Delta=2$ we are able to show that a large proportion of this family cannot vanish in certain ranges near central point (the zeros are repelled from $s=\frac{1}{2}$). For instance, Figure \ref{figure_SO(odd)} shows that, for {\it every} $t \in [\frac{3}{10},\frac{4}{10}]$, at least 70\% of the $f \in H_k^-(N)$ have $L\big(\tfrac{1}{2}+\tfrac{2\pi i t}{\log k^2N},f\big) \ne 0$ as $N \to \infty$ over square-free integers. 

\begin{figure} 
\includegraphics[scale=0.7]{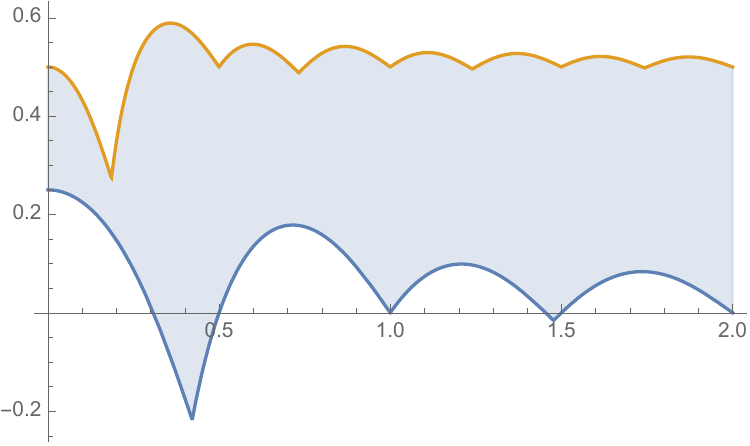}
\caption{Plots of $\mc{P}(t)$, the lower bound for the proportion of non-vanishing, when $G = {\rm O}$ and $\Delta \in \{1,2\}$ (blue and orange, respectively), for small $t>0$. When $\Delta =2$ the maximum value is $0.5892\ldots$ attained at $t = 0.3575\ldots$, and the limit as $t \to \infty$ is $\frac12$. }
\label{figure_O}
\end{figure}

\begin{figure} 
\includegraphics[scale=0.7]{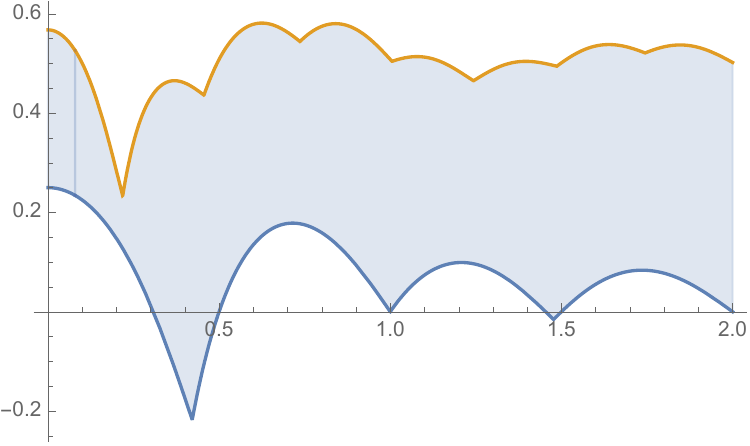}
\caption{Plots of $\mc{P}(t)$, the lower bound for the proportion of non-vanishing, when $G = {\rm SO}({\rm even})$ and $\Delta \in \{1,2\}$ (blue and orange, respectively), for small $t>0$. When $\Delta =2$ the maximum value is $0.5814\ldots$ attained at $t = 0.6247\ldots$, and the limit as $t \to \infty$ is $\frac12$.}
\label{figure_SO(even)}
\end{figure}

\subsubsection{Symplectic examples} We now describe a few examples of symplectic families of $L$-functions. Let $D$ be square-free, $D > 3$, and $D \equiv 3 \,({\rm mod} \,4)$. In \cite{FI}, Fouvry and Iwaniec consider the family of $L$-functions $L(s, \Psi)$, where $\Psi$  
runs over the characters of the ideal class group $\mathcal{C}\ell(K)$ of the imaginary quadratic field $K=\mathbb{Q}(\sqrt{-D})$. In \cite[Theorem 1.1]{FI}, they verify \eqref{20210430_11:41} holds for this family with $G = {\rm Sp}$ for $\Delta =1$ and, in \cite[Theorem 1.2]{FI}, with an additional restricted average over $D$, they are able to prove that this larger family is symplectic with extended support $\Delta = 4/3$.\footnote{They also show that extending the support is naturally connecting to counting primes $p$ of the form $4p=m^2+Dn^2$.} 
In \cite[Theorem 1.5]{ILS}, another example of a symplectic family of $L$-functions is given. Here they consider the family of symmetric square $L$-functions, $L(s,\mathrm{sym}^2f)$, for forms $f\in \mc{M}^\star(K,N)$ as described above. As $K \to \infty$, it is shown that this family satisfies \eqref{20210430_11:41} with $G = {\rm Sp}$ and support $\Delta = 3/2$. Finally, \"{O}zl\"{u}k and Snyder \cite{OS} showed that the family of quadratic Dirichlet $L$-functions (averaging over fundamental discriminants $d$) satisfies \eqref{20210430_11:41} with $G = {\rm Sp}$ and support $\Delta = 2$. They work with a weighted average of $d$, but the formulation of their result in the recent work of Gao \cite[Theorem 2.1 and Equation (2.2)]{Gao} agrees with our setup. Plots of the lower bounds for the proportion of non-vanishing at low-lying heights for these symplectic families of $L$-functions are given in Figure \ref{figure_Sp}. We draw the attention to the fact that, when $\Delta =2$, for every $t$ with $0<t<\frac{1}{16}$, one has $L\big(\tfrac{1}{2}+\tfrac{2\pi i t}{\log c_f},f\big) \ne 0$ for at least $94\%$ of the $L$-functions in the family.

\begin{figure} 
\includegraphics[scale=0.7]{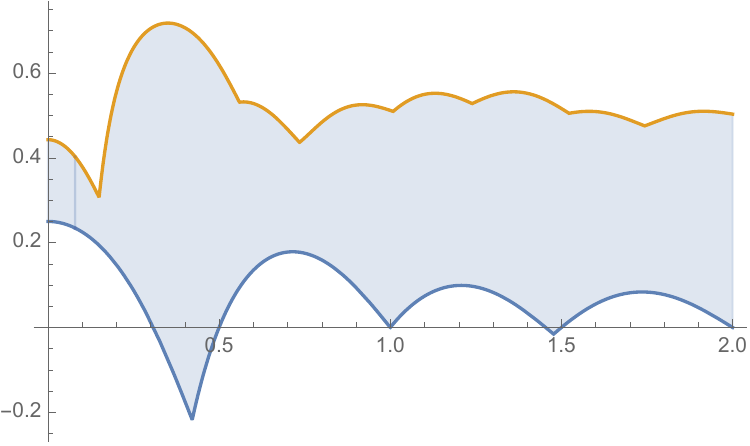}
\caption{Plots of $\mc{P}(t)$, the lower bound for the proportion of non-vanishing, when $G = {\rm SO}({\rm odd})$ and $\Delta \in \{1,2\}$ (blue and orange, respectively), for small $t>0$. When $\Delta =2$ the maximum value is $0.7175\ldots$ attained at $t = 0.3505\ldots$, and the limit as $t \to \infty$ is $\frac12$.}  
\label{figure_SO(odd)}
\end{figure}

\begin{figure} 
\includegraphics[scale=0.7]{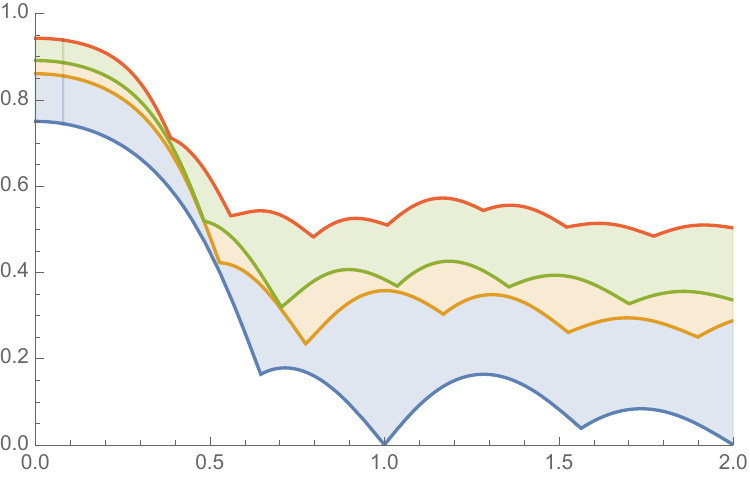}
\caption{Plots of $\mc{P}(t)$, the lower bound for the proportion of non-vanishing, when $G = {\rm Sp}$ and $\Delta \in \{1,\frac{4}{3}, \frac{3}{2}, 2\}$ (blue, orange, green, and red, respectively), for small $t>0$. The maximum values are attained when $t \to 0^+$ and can be explicitly computed using Corollary \ref{Cor_8_00}. In particular, $\mc{P}(0^+)$ is equal to $\frac{3}{4}$ (for $\Delta =1$), $0.8604\ldots$ (for $\Delta =\frac{4}{3}$), $0.8910\ldots$ (for $\Delta =\frac{3}{2}$), and $0.9427\ldots$ (for $\Delta =2$). From Theorem \ref{Thm_symplectic}, the limit of $\mc{P}(t)$ as $t \to \infty$ is $1 - \frac{1}{\Delta}$ in each case.}
\label{figure_Sp}
\end{figure}

\smallskip

\noindent {\sc Remark:} There are instances in the literature in which a small variation of \eqref{20210430_11:41} is proved, namely, when the average on the left-hand side of \eqref{20210430_11:41} is replaced by a weighted/smoothed average. A typical example would be when our family $\mc{F}$ decomposes as $\displaystyle \mc{F} = \bigcup_q \mc{F}(q)$ and one smoothly averages over the parameter $q$ in an interval, say $Q \leq q \leq 2Q$, in order to prove an identity of the form 
\begin{equation*}
\lim_{Q \to\infty} \, \frac{ \displaystyle \sum_q \Psi(q/Q)\sum_{f\in \mathcal{F}(q)} \sum_{\gamma_f} \phi\!\left( \gamma_f \frac{\log c_f}{2\pi}\right)}{\displaystyle \sum_q \Psi(q/Q) \cdot  |\mc{F}(q)|   }= \int_\mathbb{R} \phi(x) \, W_G(x) \, \d x,
\end{equation*}
where $\Psi$ is a smooth function supported in the interval $[1,2]$. Assuming that the weight $\Psi$ is non-negative, 
the proof of our Theorem \ref{Thm2_Non-vanishing} below carries through {\it ipsis litteris}, yielding, for $t>0$,
\begin{equation*}
\limsup_{Q \to\infty} \frac{\displaystyle \sum_q \Psi(q/Q) \sum_{f \in \mathcal{F}(q) } \ \underset{s=\frac{1}{2}}{\textup{ord}} \ L\left(s+ \frac{2 \pi i t }{\log c_f}\,,\, f\right)}{ \displaystyle \sum_q \Psi(q/Q) \cdot  |\mc{F}(q)| } \, \leq \, \frac{1}{K(t,t) + |K(t,-t)|}.
\end{equation*}
This is how the upper bound for the proportion of vanishing (and, consequently, the lower bound for proportion of non-vanishing) should be interpreted in these cases.

 \subsection{Existence of low-lying zeros} \label{Intro_low_lying_zeros}In \cite[Theorem 8.1]{HR}, Hughes and Rudnick consider the problem of establishing the {\it existence} of low-lying zeros in the family of primitive Dirichlet $L$-functions modulo $q$ by relating it to an interesting extremal problem in analysis. They prove that, under GRH, if the support is $[-\Delta,\Delta]$ in the one-level density theorem for Dirichlet $L$-functions modulo a prime $q$, then
\begin{equation*}
\limsup_{q \to \infty} \,\min_{\substack{\gamma_\chi \\ \chi \neq \chi_0}} \left|\frac{\gamma_{\chi} \,\log q}{2\pi}\right|\leq \frac{1}{2\Delta}\,,
\end{equation*}
where the minimum is taken over the ordinates $\gamma_\chi$ of the zeros of $L(s,\chi)$ for all non-principal characters $\chi$ (mod $q$). In other words, there exist low-lying zeros that are at most $\frac{1}{2\Delta}$ times the average spacing. With $\Delta = 2$, they are able to deduce the value $\frac{1}{4}$ for this problem. Their setup will work for any unitary family of $L$-functions, and our goal is to discuss a generalization of their result for the other four symmetry types. We remark that such a generalization was first obtained by Bernard \cite{Bernard}, but our methods here are different (and so is our setup in the cases $G = {\rm O}$ or $G = {\rm SO}({\rm odd})$).

\smallskip

Given a family $\mathcal{F}$, we might aim to seek an upper bound for 
\[
\limsup_{Q\to\infty} \ \min_{ \substack{\gamma_f \\ f \in \mathcal{F}(Q) }} \left| \frac{\gamma_f \log c_f}{2\pi} \right|,
\]
where the minimum is over the ordinates $\gamma_f$ of the zeros of the $L$-functions $L(s,f)$ with $f \in \mathcal{F}(Q)$. Indeed, for even orthogonal or symplectic families, we estimate this quantity. However, for an orthogonal (resp.~odd orthogonal) family, we expect half (resp.~all) of the $L$-functions in the family to trivially vanish at the central point due to the sign of the functional equation. Therefore, for these two symmetry types, we define a slightly modified quantity. As we shall see, in all four cases it becomes more difficult to solve the associated extremal problem in analysis, stated in \eqref{20210907_13:49} below, when the corresponding measure is not the Lebesgue measure (as is the case for unitary symmetry). We do so by establishing a bridge to the powerful theory of de Branges spaces of entire functions \cite{Branges}, and by taking advantage of our explicit representations for the reproducing kernels given in Theorems \ref{Thm3_PW} -- \ref{Thm_odd_orthogonal}.

\smallskip

Before setting up the general problem, we first give an example to illustrate one of the results. Using the notation in \textsection \ref{mf}, we consider the even orthogonal family $H_k^+\!(N)$ of holomorphic newforms of weight $k$, level $N$, and $\varepsilon_f=+1$. 
As the level $N \to \infty$ over square-free integers, \cite[Theorem 1.1]{ILS} implies that we may take $G = {\rm SO}({\rm even})$ and $\Delta=2$  
for this family. Therefore, Theorem \ref{Thm_Db_9} below implies that
\[
\limsup_{ N \to\infty}  \min_{ \substack{\gamma_f \\ f \in H_k^+\!(N) }} \left| \frac{\gamma_f \log k^2N }{2\pi} \right| \le 0.2185\ldots.
\]
In other words, as $N\to \infty$ over square-free integers, we see that there exist $L(s,f)$ with $f \in H_k^+(N)$ whose first low-lying zero is at most $0.2185\ldots$ times the average spacing. The fact that this quantity is less than the corresponding value of $\frac{1}{4}$ in Hughes and Rudnick's result for Dirichlet $L$-functions is not surprising when we recall that the corresponding density functions are $W_{{\rm U}}(x) = \, 1$ and $W_{{\rm SO}(\rm{even})}(x) \, = \, 1 + \frac{\sin 2\pi x}{2\pi x}$, respectively. In both cases the support is $\Delta=2$, so these densities indicate that we expect more zeros in an even orthogonal family near the central point than we do for a unitary family. The analysis of the problem detects this and we are able to obtain a stronger result.

\smallskip

We now set up our general problem, in which we work under the same hypotheses of Theorem \ref{Thm2_Non-vanishing}. When $G = {\rm O}$ or $G = {\rm SO}({\rm odd})$, we are going to assume that there exists a subset $\bigcup_{f \in \mc{F}}\mc{Z}(f)$ of the zeros at the central point such that 
\begin{equation*}
\lim_{Q \to \infty}  \frac{1}{|\mc{F}(Q)|} \sum_{f \in \mc{F}(Q)}  \sum_{\gamma_f \in \mc{Z}(f)} \!\!1\, = \frac{1}{2} \ \ \  ({\rm when} \ G = {\rm O}) 
\end{equation*}
and
\begin{equation*}
\lim_{Q \to \infty}  \frac{1}{|\mc{F}(Q)|} \sum_{f \in \mc{F}(Q)} \sum_{\gamma_f \in \mc{Z}(f)}  \!\! 1\,= 1 \ \ \  ({\rm when} \ G = {\rm SO}({\rm odd})).
\end{equation*}
When $G \in \{{\rm U, Sp, SO}({\rm even})\}$ we may simply regard $\mc{Z}(f) = \emptyset$. Hence, removing these zeros at the central point that `naturally' arise from the functional equation, our target is to bound the quantity
\begin{equation*}
\limsup_{Q \to \infty} \ \min_{\substack{ \gamma_f \notin \mc{Z}(f) \\ f \in \mc{F}(Q)}} \left|\frac{\gamma_{f} \,\log c_f}{2\pi}\right|.
\end{equation*}
These assumptions remove the delta distributions from the original density functions \eqref{densities}, and \eqref{20210430_11:41} becomes
\begin{equation}\label{20210907_11:58}
\lim_{Q \to\infty} \, \frac{1}{|\mathcal{F}(Q)|} \sum_{f\in \mathcal{F}(Q)} \, \sum_{\gamma_f \notin \mc{Z}(f)} \phi\!\left( \gamma_f \frac{\log c_f}{2\pi}\right) = \int_\mathbb{R} \phi(x) \, W_{G^\sharp}(x) \, \d x,
\end{equation}
where we are denoting 
\begin{equation}\label{20210909_23:00}
G^{\sharp} := G \ \ {\rm if}\ \  G \in \{{\rm U, Sp, SO}({\rm even})\} \ \ \ ; \ \ \ {\rm O}^{\sharp} := {\rm U} \ \ \ ;\  \ \  {\rm SO}({\rm odd})^{\sharp} := {\rm Sp}.
\end{equation}
In other words, we have
\begin{equation*}
W_{{\rm U}^{\sharp}}(x) = W_{{\rm O}^{\sharp}}(x) = 1\ \ ;\ \ W_{{\rm Sp}^{\sharp}}(x) = W_{{\rm SO}({\rm odd})^{\sharp}}(x) = 1 - \frac{\sin 2\pi x}{2\pi x} \ \ ;\ \ W_{{\rm SO}({\rm even})^{\sharp}}(x) = 1 + \frac{\sin 2\pi x}{2\pi x}. 
\end{equation*}
Note here, by abuse of notation, that the entity $G^{\sharp}$ no longer corresponds to the symmetry group of the family. We have introduced this notation simply for convenience, so that we can easily refer to the Hilbert spaces and the corresponding reproducing kernels that we have already defined.
\smallskip

We establish the following result, as a consequence of a more general result within the theory in de Branges spaces (Theorem \ref{DB_Thm} below).

\begin{theorem} \label{Thm_Db_9} Under the same hypotheses of Theorem \ref{Thm2_Non-vanishing}, let $G^{\sharp}$ be given by \eqref{20210909_23:00} and $K = K_{G^{\sharp}, \pi\Delta}$ be the reproducing kernel of the Hilbert space $\mathcal{H}_{G^{\sharp}, \pi\Delta}$. Let $\xi_0$ be the smallest positive real zero of the function $x \mapsto {\rm Re} \big((1-ix)K(i,x)\big)$. Then 
\begin{equation*}
\limsup_{Q\to \infty} \ \min_{\substack{ \gamma_f \notin \mc{Z}(f) \\ f \in \mc{F}(Q)}} \left|\frac{\gamma_{f} \,\log c_f}{2\pi}\right| \leq \xi_0.
\end{equation*}
\end{theorem}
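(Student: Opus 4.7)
The plan is to prove Theorem \ref{Thm_Db_9} by a contradiction argument that combines the one-level density identity \eqref{20210907_11:58} with an existence result coming from de Branges space theory (the general Theorem \ref{DB_Thm} referenced in the introduction). The strategy follows the spirit of the Hughes-Rudnick argument \cite{HR} for the unitary case, but with Lebesgue measure replaced by $W_{G^\sharp}(x)\,\dx$, whose underlying Hilbert space $\mc{H}_{G^\sharp,\pi\Delta}$ carries a natural de Branges structure whose reproducing kernel is accessible through Theorems \ref{Thm3_PW}--\ref{Thm_odd_orthogonal}.

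Assume for contradiction that the limsup in Theorem \ref{Thm_Db_9} strictly exceeds $\xi_0$. Then there exist $\xi > \xi_0$ and a subsequence of families (with $\#\mc{F} \to\infty$) along which every normalized non-trivial ordinate $\tilde\gamma_f := \gamma_f\log c_f/(2\pi)$ satisfies $|\tilde\gamma_f|\geq \xi$. Since $\xi > \xi_0$, Theorem \ref{DB_Thm} applied to $\mc{H}_{G^\sharp,\pi\Delta}$ yields a nonzero entire $F$ of exponential type at most $\pi\Delta$ with $F, xF \in L^2(\R,W_{G^\sharp})$ and the strict moment inequality
$$\int_\R (\xi^2 - x^2)\,|F(x)|^2\, W_{G^\sharp}(x)\,\dx \ >\ 0.$$
Set $\phi(x):=(\xi^2-x^2)|F(x)|^2$. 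By Paley-Wiener and a convolution, $\widehat{|F|^2}$ is supported in $[-\Delta,\Delta]$; multiplication by the polynomial $(\xi^2-x^2)$ preserves the Fourier support, so $\widehat\phi \subset [-\Delta,\Delta]$. After a standard mollification to arrange even Schwartz regularity with $\widehat\phi \subset (-\Delta,\Delta)$, we insert $\phi$ into \eqref{20210907_11:58} to obtain
$$\lim_{\#\mc{F}\to\infty} \frac{1}{\#\mc{F}} \sum_{f\in\mc{F}} \sum_{\gamma_f\notin\mc{Z}(f)} (\xi^2-\tilde\gamma_f^2)\,|F(\tilde\gamma_f)|^2 \ =\ \int_\R \phi(x)\, W_{G^\sharp}(x)\,\dx \ >\ 0.$$
The left-hand side is, however, non-positive: under the standing assumption $|\tilde\gamma_f|\geq\xi$, each summand has $(\xi^2-\tilde\gamma_f^2)\leq 0$ while $|F(\tilde\gamma_f)|^2 \geq 0$. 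This contradiction forces $\limsup \leq \xi_0$.

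The hard part is packaged inside Theorem \ref{DB_Thm}: proving that the smallest $\xi>0$ for which the extremizer $F$ above exists is precisely $\xi_0$, the smallest positive zero of $x\mapsto{\rm Re}\bigl((1-ix)K(i,x)\bigr)$. This is where the de Branges machinery enters. One identifies the Hermite-Biehler pair $E=A-iB$ naturally attached to $\mc{H}_{G^\sharp,\pi\Delta}$ with $A$ even and $B$ odd real entire, giving the formula $K(w,z) = [B(z)\overline{A(w)} - A(z)\overline{B(w)}]/[\pi(z-\overline{w})]$. Using $(1-ix)=-i(x+i)$ together with the parity relations $A(i)\in\R$ and $B(i)\in i\R$, a short computation shows that ${\rm Re}\bigl((1-ix)K(i,x)\bigr)$ is a nonzero real multiple of $A(x)$, so its smallest positive zero coincides with that of $A$. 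The de Branges extremal principle for multiplication by $x$ then identifies this zero with the critical threshold for the moment inequality above, and the explicit reproducing kernels from Theorems \ref{Thm3_PW}--\ref{Thm_odd_orthogonal} render $\xi_0$ effectively computable for each symmetry type.
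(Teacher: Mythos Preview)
Your proposal is correct and follows essentially the same approach as the paper: both reduce Theorem \ref{Thm_Db_9} to the extremal problem \eqref{20210907_13:49} via the Hughes--Rudnick test function $\phi(x)=(\xi^2-x^2)|F(x)|^2$ and the one-level density \eqref{20210907_11:58}, then invoke Theorem \ref{DB_Thm} for the de Branges space isometric to $\mathcal{H}_{G^\sharp,\pi\Delta}$ to identify the threshold with the smallest positive zero of $A$. Your contradiction packaging is equivalent to the paper's direct argument, and your verification that $\mathrm{Re}\big((1-ix)K(i,x)\big)$ is a nonzero real multiple of $A(x)$ (via $B(i)\in i\mathbb{R}\setminus\{0\}$) makes explicit what the paper leaves implicit in the definition $E(z)=L(i,z)/L(i,i)^{1/2}$.
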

\begin{figure} 
\includegraphics[scale=0.7]{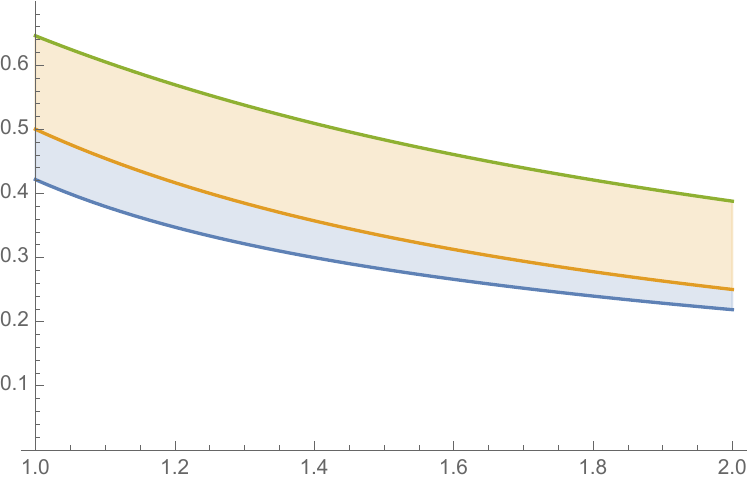}
\caption{Plots of the map $\Delta \mapsto (\xi_0)_{G, \pi\Delta}$ for $1 \leq \Delta \leq 2$ and $G = {\rm SO}({\rm even})$ (in blue), $G \in \{{\rm U, O}\}$ (in orange), and $G \in \{{\rm Sp, SO}({\rm odd})\}$ (in green).}
\label{figure_Rudnick}
\end{figure}
We shall see that this conceptual bound is the best possible with this method. In the cases $G \in \{{\rm U, O}\}$ ($G^{\sharp} = {\rm U}$), from Theorem \ref{Thm3_PW} we have ${\rm Re} \big((1-ix)K(i,x)\big) = \frac{\sinh(\pi \Delta)}{\pi} \cos (\pi \Delta x)$ and we immediately get $\xi_0 = 1/(2\Delta)$, recovering the original result of Hughes and Rudnick \cite{HR} via a different approach. The power of this framework becomes evident in the other cases, in which we use the explicit representations for the reproducing kernels in Theorems \ref{Rep_Kernel_SO(even)} and \ref{Thm_symplectic}. The table below collects some particular values of the upper bound $\xi_0 = (\xi_0)_{G, \pi\Delta}$ for the height of the first low-lying zero (see Figure \ref{figure_Rudnick} for the plots):
\medskip
\begin{center}
\begin{tabular}{|c|c|c|c|c|}
\hline
& \ $\Delta =1$ \ & $\Delta = 4/3$ & $\Delta = 3/2$ & \ $\Delta =2$\  \\
\hline
$G = G^{\sharp}  = {\rm SO}({\rm even})$ &0.4215\ldots&0.3136\ldots&0.2815\ldots&0.2185\ldots \\
\hline
$G \in \{{\rm U, O}\} \ (G^{\sharp} = {\rm U})$ &1/2 &3/8 &1/3 &1/4\\
\hline
$G \in \{{\rm Sp, SO}({\rm odd})\} \ (G^{\sharp} = {\rm Sp})$ &0.6457\ldots &0.5277\ldots&0.4836\ldots&0.3877\ldots \\
\hline
\end{tabular}
\end{center}

\subsection{Notation} Throughout the text we denote by ${\bf 1}$ (resp. ${\bf 0}$) the constant function equal to $1$ (resp. $0$) on $\R$ (or on an alternative domain that should be clear from the context). The indicator function of a set $X$ is denoted by ${\bf 1}_X$. The space of continuous functions on $\R$ is denoted by $C(\R)$.

\section{Extremal problems and the proofs of Theorems \ref{Dirichlet1} and \ref{Thm2_Non-vanishing}} \label{Sec2_proofs}

\subsection{The delta extremal problems} \label{Two-Delta_Sec}For $G \in \{{\rm U, Sp, O, SO}({\rm even}), {\rm SO}(\rm{odd})\}$, and $\Delta >0$, let us assume throughout this section the validity of the claim that the Hilbert space $\mathcal{H}_{G, \pi\Delta}$ and the Paley-Wiener space $\mathcal{H}_{\pi \Delta}$ are the same (as sets) and have equivalent norms. This is proved in Section \ref{Sec2}. For each $t \in \R$, let us define the following classes of functions $M:\R \to \R$:
\begin{equation*}
\mathcal{A}^{\star}_{2\pi\Delta}(t):= \Big\{ M \in L^1(\R) \cap C(\R)\ ; \ {\rm supp}(\widehat{M}) \subset [-\Delta,\Delta] \ ; \  M \geq 0 \ {\rm on} \ \R \ ; \ M(t) \geq 1\Big\}.
\end{equation*}
and
\begin{equation*}
\mathcal{A}_{2\pi\Delta}(t):= \Big\{ M \in L^1(\R) \cap C(\R)\ ; \ {\rm supp}(\widehat{M}) \subset [-\Delta,\Delta] \ ; \  M \geq 0 \ {\rm on} \ \R \ ; \ M(t) \geq 1 \ ; \ M(-t) \geq 1\Big\}.
\end{equation*}
Note that, by Fourier inversion, each function $M \in L^1(\R) \cap C(\R)$ with ${\rm supp}(\widehat{M}) \subset [-\Delta,\Delta]$ is the restriction to $\R$ of an entire function of exponential type at most $2 \pi \Delta$. We consider here two extremal problems.

\medskip

\noindent{\it One-delta extremal problem.} For $G \in \{{\rm U, Sp, O, SO}({\rm even}), {\rm SO}(\rm{odd})\}$, $\Delta >0$ and $t \in \R$, find:
\begin{equation}\label{20210504_12:23_1}
\inf_{M \in\mathcal{A}^{\star}_{2\pi\Delta}(t)}  \int_\mathbb{R} M(x) \, W_G(x) \, \d x.
\end{equation}

\medskip

\noindent{\it Two-delta extremal problem.} For $G \in \{{\rm U, Sp, O, SO}({\rm even}), {\rm SO}(\rm{odd})\}$, $\Delta >0$ and $t \in \R$, find:
\begin{equation}\label{20210504_12:23}
\inf_{M \in\mathcal{A}_{2\pi\Delta}(t)}  \int_\mathbb{R} M(x) \, W_G(x) \, \d x.
\end{equation}

\medskip

The bridge that connects these two extremal problems to our Hilbert spaces is a classical decomposition result due to Krein \cite[p. 154]{A}: a function $M:\R \to \R$ that verifies $M \in L^1(\R) \cap C(\R)$,  ${\rm supp}(\widehat{M}) \subset [-\Delta,\Delta]$ and  $M \geq 0 \ {\rm on} \ \R$ can be written as $M(x) = |F(x)|^2$ with $F \in \mathcal{H}_{\pi \Delta} = \mathcal{H}_{G, \pi \Delta}$, and conversely. 

\medskip 

The one-delta extremal problem \eqref{20210504_12:23_1} can then be reformulated in the reproducing kernel Hilbert space $\mathcal{H}_{G, \pi \Delta}$, in which one wants to minimize the norm $\|F\|^2_{\mathcal{H}_{G, \pi \Delta}}  = \|F\|^2_{L^2(\R, W_G)}$, subject to the condition $|F(t)| \geq 1$. The solution now follows by an application of the Cauchy-Schwarz inequality using the reproducing kernel (this idea dates back at least to the work of Holt and Vaaler \cite{HV})
\begin{equation}\label{20210512_12:33}
1 \leq |F(t)|^2 = |\langle F, K(t, \cdot)\rangle |^2 \leq \|F\|^2_{L^2(\R, W_G)}\,\|K(t, \cdot)\|^2_{L^2(\R, W_G)} = \|F\|^2_{L^2(\R, W_G)}\, K(t, t),
\end{equation}
where $K = K_{G, \pi\Delta}$ and $\langle \cdot\,, \cdot \rangle$ is the inner product in $\mathcal{H}_{G, \pi \Delta}$.
The conclusion is that 
\begin{equation}\label{20210512_14:33}
\inf_{M \in\mathcal{A}^{\star}_{2\pi\Delta}(t)}  \int_\mathbb{R} M(x) \, W_G(x) \, \d x = \frac{1}{K(t,t)}.
\end{equation}
The infimum is attained and the unique extremal function $\mathscr{M}^{\star} = \mathscr{M}^{\star}_{G, \Delta, t}  \in\mathcal{A}^{\star}_{2\pi\Delta}(t)$ (from the condition of equality in the Cauchy-Schwarz inequality \eqref{20210512_12:33}) is given by
\begin{equation}\label{20210512_14:34}
\mathscr{M^{\star}}(x) = \frac{|K(t, x)|^2}{K(t,t)^2}.
\end{equation}
Note that this function is even when $t=0$; see \S\ref{Sub5_Prelim} below.

\smallskip

The solution of the two-delta extremal problem \eqref{20210504_12:23} falls under the scope of the general framework of Carneiro, Chandee, Littmann and Milinovich \cite[Lemma 13]{CCLM} (see also the related works \cite{Kelly, Litt, Sono,V}). We state here this result for the convenience of the reader.

\begin{lemma}$(${\rm cf.} \cite[Lemma 13]{CCLM}$)$\label{HS_geometric_lemma}
Let $H$ be a Hilbert space $($over $\C$$)$ with norm $\| \cdot \|$ and inner product $\langle \cdot\,, \cdot \rangle$. Let $v_1, v_2 \in H$ be two nonzero vectors $($not necessarily distinct$)$ such that $\|v_1\| = \|v_2\|$ and define
$$\J = \big\{ x \in H; \ |\langle x, v_1\rangle |  \geq 1 \ {\rm and} \ |\langle x, v_2\rangle |  \geq 1\big\}.$$
Then
\begin{equation*}
\min_{x \in \J}\|x\| = \left(\frac{2}{\big(\|v_1\|^2 + |\langle v_1, v_2\rangle|\big)}\right)^{1/2}.
\end{equation*}
The extremal vectors $y \in \J$ are given by:
\begin{enumerate}
\item[(i)] If $\langle v_1, v_2\rangle = 0$, then $y =   \left(\dfrac{2}{\big(\|v_1\|^2 + |\langle v_1, v_2\rangle|\big)}\right)^{\!\!1/2} \!\dfrac{(c_1v_1 + c_2v_2)}{\left\|v_1 + v_2\right\|}$, where $c_1, c_2 \in \C$ with $|c_1| = |c_2| =1$. 
\smallskip
\item[(ii)] If $\langle v_1, v_2\rangle \neq 0$, and $\langle v_1, v_2\rangle =  \,e^{-i\alpha} \,|\langle v_1, v_2\rangle|$, then $y =   \left(\dfrac{2}{\big(\|v_1\|^2 + |\langle v_1, v_2\rangle|\big)}\right)^{1/2}\,  \dfrac{c\,(e^{i\alpha } v_1 + v_2)}{\left\| e^{i\alpha} v_1 + v_2\right\|}$, 
where $c \in \C$ with $|c| =1$. 
\end{enumerate}
\end{lemma}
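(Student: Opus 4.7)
My plan is to establish the lower bound by a Cauchy--Schwarz argument against a phase-aligned test vector, and then to verify sharpness by direct computation with the candidate extremal. First, since orthogonal projection onto $\mathrm{span}(v_1, v_2)$ preserves both values $\langle x, v_j\rangle$ (as each $v_j$ already lies in the subspace) while not increasing $\|x\|$, I may assume $x \in \mathrm{span}(v_1, v_2)$ throughout. For $x \in \J$, write $\langle x, v_j\rangle = \rho_j e^{i\phi_j}$ with $\rho_j \geq 1$ and introduce the test vector
\[
u \,:=\, e^{i\phi_1} v_1 + e^{i\phi_2} v_2,
\]
designed so that the two terms in $\langle x, u\rangle$ add in phase: $\langle x, u\rangle = \rho_1 + \rho_2 \geq 2$. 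Using $\|v_1\| = \|v_2\|$, one computes
\[
\|u\|^2 \,=\, 2\|v_1\|^2 + 2\,\mathrm{Re}\bigl(e^{i(\phi_1-\phi_2)}\langle v_1, v_2\rangle\bigr) \,\leq\, 2\bigl(\|v_1\|^2 + |\langle v_1, v_2\rangle|\bigr).
\]
Cauchy--Schwarz then yields $4 \leq |\langle x, u\rangle|^2 \leq \|x\|^2\,\|u\|^2$, and combining this with the bound above gives $\|x\|^2 \geq 2/(\|v_1\|^2 + |\langle v_1, v_2\rangle|)$, which is the asserted minimum.

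For sharpness and the description of extremals, I would verify directly that the vector $y$ in the statement achieves this bound. In case (ii), writing $\langle v_1, v_2\rangle = e^{-i\alpha}|\langle v_1, v_2\rangle|$, short computations give $\|e^{i\alpha}v_1 + v_2\|^2 = 2(\|v_1\|^2 + |\langle v_1, v_2\rangle|)$ and $|\langle e^{i\alpha}v_1 + v_2, v_j\rangle| = \|v_1\|^2 + |\langle v_1, v_2\rangle|$ for $j = 1, 2$; after the normalization in the statement these read $|\langle y, v_j\rangle| = 1$ and $\|y\|^2 = 2/(\|v_1\|^2 + |\langle v_1, v_2\rangle|)$. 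Case (i) is analogous but simpler, as orthogonality decouples the two constraints and allows $c_1, c_2$ to be chosen independently. To classify \emph{all} minimizers, I would retrace the inequality chain: Cauchy--Schwarz equality forces $x$ to be a scalar multiple of $u$; saturating $\rho_1 + \rho_2 \geq 2$ under $\rho_j \geq 1$ forces $\rho_1 = \rho_2 = 1$; and in case (ii), saturating $\mathrm{Re}(\zeta) \leq |\zeta|$ pins down the relative phase $\phi_1 - \phi_2 \equiv \alpha \pmod{2\pi}$, whereas in case (i) this relative phase is unconstrained.

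The main obstacle is that the two independent sources of slack---the constraint-side inequality $\rho_1 + \rho_2 \geq 2$ and the test-vector inequality $\|u\|^2 \leq 2(\|v_1\|^2 + |\langle v_1, v_2\rangle|)$, each of which depends on the phases $\phi_j$---must be made simultaneously tight by a single $x \in \J$. In case (ii) this compatibility requirement forces the rigid alignment $\phi_1 - \phi_2 \equiv \alpha$ and produces the one-parameter family of extremals parameterized by the unimodular constant $c$; in case (i) the alignment is vacuous, and one recovers the advertised two-parameter family with independent unimodular constants $c_1, c_2$.
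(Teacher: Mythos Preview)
The paper does not actually prove this lemma; it is quoted from \cite[Lemma 13]{CCLM} and introduced with ``We state here this result for the convenience of the reader.'' So there is no proof in the paper to compare against.

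That said, your argument is correct and self-contained. The key idea---for a given $x\in\J$, building the phase-aligned test vector $u=e^{i\phi_1}v_1+e^{i\phi_2}v_2$ so that $\langle x,u\rangle=\rho_1+\rho_2\geq 2$, bounding $\|u\|^2\leq 2(\|v_1\|^2+|\langle v_1,v_2\rangle|)$, and applying Cauchy--Schwarz---cleanly produces the lower bound. Your verification that the candidate $y$ attains the bound is accurate, and your equality analysis (Cauchy--Schwarz equality forces $x\parallel u$; $\rho_1+\rho_2=2$ with $\rho_j\geq 1$ forces $\rho_1=\rho_2=1$; saturation of $\mathrm{Re}(e^{i(\phi_1-\phi_2)}\langle v_1,v_2\rangle)\leq|\langle v_1,v_2\rangle|$ pins down $\phi_1-\phi_2\equiv\alpha$ in case (ii)) correctly recovers the stated extremal families. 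The preliminary reduction to $\mathrm{span}(v_1,v_2)$ is also sound, and is needed for the uniqueness part of the classification (any minimizer must lie in this span, since projecting strictly decreases the norm otherwise). One small point worth stating explicitly: when $v_1$ and $v_2$ are linearly dependent (so $v_2=\lambda v_1$ with $|\lambda|=1$), the two constraints coincide and the problem degenerates to the one-vector Cauchy--Schwarz case, which your formula still covers.
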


As before, by Krein's decomposition, the two-delta extremal problem \eqref{20210504_12:23} can be reformulated in the reproducing kernel Hilbert space $\mathcal{H}_{G, \pi \Delta}$, in which one wants to minimize the norm $\|F\|^2_{\mathcal{H}_{G, \pi \Delta}}  = \|F\|^2_{L^2(\R, W_G)}$, subject to the conditions
 $$|F(t)| = |\langle F, K(t, \cdot)\rangle | \geq 1 \ \  {\rm and} \ \  |F(-t)| = |\langle F, K(-t, \cdot)\rangle | \geq 1,$$
In \S\ref{Sub5_Prelim} below we verify a few properties of $K$, including the following:
\begin{itemize}
\item[(a)] For each $w,z \in \C$ we have $K(w,z) = K(-w,-z)$. In particular, $K(t,t) = \|K(t, \cdot)\|_{L^2(\R, W_G)}^2 = \|K(-t, \cdot)\|_{L^2(\R, W_G)}^2 = K(-t,-t) > 0$. 
\smallskip
\item[(b)] The function $z \to K(t, z)$ is real entire for each $t\in \R$. Hence $K(t, -t) = \langle K(t, \cdot), K(-t, \cdot)\rangle = \langle K(-t, \cdot), K(t, \cdot)\rangle =  K(-t,t)$ is real-valued. 
\end{itemize}
This puts us in position to apply Lemma \ref{HS_geometric_lemma} and conclude that     
\begin{equation}\label{20210504_14:13}
\inf_{M \in\mathcal{A}_{2\pi\Delta}(t)}  \int_\mathbb{R} M(x) \, W_G(x) \, \d x = \frac{2}{K(t,t) + |K(t,-t)|}.
\end{equation}
Lemma \ref{HS_geometric_lemma} tells us that this infimum is attained and the function $\mathscr{M} = \mathscr{M}_{G, \Delta, t}  \in\mathcal{A}_{2\pi\Delta}(t)$ given by\footnote{Recall that ${\rm sgn}:\R \to \R$ is defined by ${\rm sgn}(t) =1$, if $t>0$; ${\rm sgn}(0) = 0$; and ${\rm sgn}(t) =-1$, if $t<0$. In the case $K(t,-t) = 0$, we are choosing constants $c_1$ and $c_2$ in Lemma \ref{HS_geometric_lemma} (i) with ${\rm Re}(c_1\overline{c_2}) =0$ to arrive at \eqref{20210504_13:44}.} 
\begin{equation}\label{20210504_13:44}
\mathscr{M}(x) =  \frac{K(t,x)^2 +K(-t,x)^2  + 2\, {\rm sgn}(K(t,-t)) \, K(t,x)\, K(-t,x)}{\big(K(t,t) + |K(t,-t)|\big)^2}
\end{equation}
is an extremizer in general, being unique if $K(t, -t) \neq 0$. Note that $\mathscr{M}$ is an even function with $\mathscr{M}(\pm t) = 1$.  

\medskip

\noindent{\sc Remark:} It is interesting to observe how much the solution of the two-delta extremal problem improves over the simple superposition (addition) of the two solutions of the one-delta extremal problem at $t$ and $-t$.  The latter construction would give an answer of $2/K(t,t)$ instead of the right-hand side of \eqref{20210504_14:13}. The improvement is more significant when $|t|$ is small, as illustrated in Figure \ref{figure2}.

\subsection{Proof of Theorem \ref{Thm2_Non-vanishing}} \label{Approx_Arg} Let us start with part (ii). Fix $t >0$. Using \eqref{assumption} and \eqref{duality}, for any non-negative function $\phi :\R \to \R$ with $\phi(\pm t)\geq 1$ we have 
\begin{align}\label{20210504_14:04}
\begin{split}
\frac{2}{|\mathcal{F}(Q)|} \sum_{f \in \mathcal{F}(Q)} \ \underset{s=\frac{1}{2}}{\textup{ord}} \ L\left(s+ \frac{2 \pi i t }{\log c_f}\,,\, f\right)  & =  \frac{1}{|\mathcal{F}(Q)|}  \sum_{f \in \mathcal{F}(Q)} \left\{ \underset{s=\frac{1}{2}}{\textup{ord}} \ L\left(s+ \frac{2 \pi i t }{\log c_f}\,,\, f\right)  + \underset{s=\frac{1}{2}}{\textup{ord}} \ L\left(s- \frac{2 \pi i t }{\log c_f}\,,\, \overline{f}\right) \right\} \\
& = \frac{1}{|\mathcal{F}(Q)|}  \sum_{f \in \mathcal{F}(Q)} \left\{\underset{s=\frac{1}{2}}{\textup{ord}} \ L\left(s+ \frac{2 \pi i t }{\log c_f}\,,\, f\right)  + \underset{s=\frac{1}{2}}{\textup{ord}} \ L\left(s- \frac{2 \pi i t }{\log c_f}\,,f\right) \right\}\\
& \leq \frac{1}{|\mathcal{F}(Q)|} \sum_{f \in \mathcal{F}(Q)} \sum_{\gamma_f} \phi\!\left( \gamma_f \frac{\log c_f}{2\pi}\right).
\end{split}
\end{align}
Assuming that estimate \eqref{20210430_11:41} holds for such $\phi$ we would have, from \eqref{20210504_14:04} and  \eqref{20210430_11:41}, 
\begin{align}\label{20210504_16:02}
\limsup_{ Q \to\infty} \,\frac{2}{|\mathcal{F}(Q)|} \sum_{f \in \mathcal{F}(Q)} \ \underset{s=\frac{1}{2}}{\textup{ord}} \ L\left(s+ \frac{2 \pi i t }{\log c_f}\,,\, f\right) \leq \limsup_{Q \to\infty}  \frac{1}{|\mathcal{F}(Q)|} \sum_{f \in \mathcal{F}(Q)} \sum_{\gamma_f} \phi\!\left( \gamma_f \frac{\log c_f}{2\pi}\right) = \int_\mathbb{R} \phi(x) \, W_G(x) \, \d x,
\end{align}
and this is how our two-delta extremal problem arises, in order to minimize the right-hand side above. Hence, if estimate \eqref{20210430_11:41} holds for the extremal function $\mathscr{M}$ defined in \eqref{20210504_14:13} - \eqref{20210504_13:44}, we are done. Although $\mathscr{M}$ is an entire function, it may not belong to the Schwartz class as we are assuming in the hypotheses of the theorem. This is no major concern as we can proceed by standard approximation arguments. For instance, let $\varphi: \R \to \R$ be an even, non-negative function, with ${\rm supp}(\varphi) \subset [-1,1]$;\, $\int_{\R}\varphi(y)\,\dy = 1$; and $\widehat{\varphi}$ non-negative. For $\varepsilon >0$, let $\varphi_{\varepsilon}(y): = \tfrac{1}{\varepsilon} \varphi \big( \tfrac{y}{\varepsilon}\big)$. For $0 < \delta < 1$ let $\mathscr{M}^{\delta}(x):= \delta \mathscr{M}(\delta x)$. Then $\widehat{\mathscr{M}^{\delta}}(y) = \widehat{\mathscr{M}}\big( \tfrac{y}{\delta}\big)$ and one sees that ${\rm supp} \big(\widehat{\mathscr{M}^{\delta}}\big) \subset [-\delta \Delta, \delta \Delta]$. Hence, if $0 < \varepsilon < \Delta - \delta \Delta$ we have that ${\rm supp} \big(\widehat{\mathscr{M}^{\delta}} * \varphi_{\varepsilon}\big) \subset (-\Delta, \Delta)$. For $0 < \varepsilon < \Delta - \delta \Delta$ let 
$$\phi_{\varepsilon}^{\delta}(x) = \frac{\mathscr{M}^{\delta}(x)\widehat{\varphi_{\varepsilon}}(x)}{\mathscr{M}^{\delta}(t)\widehat{\varphi_{\varepsilon}}(t)}.$$
Note that $\widehat{\varphi_{\varepsilon}}(t) = \widehat{\varphi}(\varepsilon t)$, and we can always choose $\varepsilon$ small enough so that this quantity is positive (recall that $\widehat{\varphi}(0) =1 $ and $t$ is fixed). Hence, $\phi_{\varepsilon}^{\delta}$ is an even and non-negative Schwartz function, with $\phi_{\varepsilon}^{\delta}(\pm t) =1$ and ${\rm supp} \big( \widehat{\phi_{\varepsilon}^{\delta}}\big) \subset (-\Delta, \Delta)$. In particular, we can plug $\phi_{\varepsilon}^{\delta}$ in the mechanism \eqref{20210504_14:04} - \eqref{20210504_16:02}. One can verify that, given any $\eta >0$, it is possible to choose $0 < \delta < 1$ ($\delta$ close to $1$), and  $0 < \varepsilon < \Delta - \delta \Delta$ ($\varepsilon$ close to $0$) such that 
$$ \int_\mathbb{R} \phi_{\varepsilon}^{\delta}(x) \, W_G(x) \, \d x \leq \left(\int_\mathbb{R} \mathscr{M}(x) \, W_G(x) \, \d x \right)+ \eta.$$
This concludes the proof of part (ii).

\smallskip

Part (i) is simpler. For any non-negative function $\phi :\R \to \R$ with $\phi(0)\geq 1$ we have
\begin{align}\label{20210513_14:26}
\frac{1}{|\mathcal{F}(Q)|} \sum_{f \in \mathcal{F}(Q)} \ \underset{s=\frac{1}{2}}{\textup{ord}} \ L\left(s\,,\, f\right) \leq \frac{1}{|\mathcal{F}(Q)|} \sum_{f \in \mathcal{F}(Q)} \sum_{\gamma_f} \phi\!\left( \gamma_f \frac{\log c_f}{2\pi}\right).
\end{align}
If estimate \eqref{20210430_11:41} holds for such $\phi$ we would have, from \eqref{20210513_14:26} and  \eqref{20210430_11:41},
\begin{align}\label{20210512_14:37}
\limsup_{Q \to\infty} \,\frac{1}{|\mathcal{F}(Q)|} \sum_{f \in \mathcal{F}(Q)} \ \underset{s=\frac{1}{2}}{\textup{ord}} \ L\left(s\,,\, f\right) \leq \limsup_{Q \to\infty}  \frac{1}{|\mathcal{F}(Q)|} \sum_{f \in \mathcal{F}(Q)} \sum_{\gamma_f} \phi\!\left( \gamma_f \frac{\log c_f}{2\pi}\right) = \int_\mathbb{R} \phi(x) \, W_G(x) \, \d x.
\end{align}
This brings us to the one-delta extremal problem at $t=0$. If estimate \eqref{20210430_11:41} holds for the extremal function $\mathscr{M}^{\star}$ defined in \eqref{20210512_14:33} - \eqref{20210512_14:34}, we are done. If not, we proceed with an approximation argument as above.

\smallskip

\noindent {\sc Remark:} Without the duality hypotheses \eqref{assumption} -- \eqref{duality} one can still proceed as in \eqref{20210513_14:26} - \eqref{20210512_14:37} (provided that estimate \eqref{20210430_11:41} holds for Schwartz functions $\phi:\R \to \R$ with ${\rm supp} (\widehat{\phi}) \subset (-\Delta, \Delta)$) using the one-delta extremal problem to prove, for any $t \in \R$, that
\begin{equation}\label{20210903_14:08}
\limsup_{Q \to\infty} \,\frac{1}{|\mathcal{F}(Q)|} \sum_{f \in \mathcal{F}(Q)} \ \underset{s=\frac{1}{2}}{\textup{ord}} \ L\left(s+ \frac{2 \pi i t }{\log c_f}\,,\, f\right) \leq \frac{1}{K(t,t)}.
\end{equation}
Note that \eqref{20210503_09:43} is generically sharper than \eqref{20210903_14:08}, for $t \neq 0$.

\subsection{Proof of Theorem \ref{Dirichlet1}} Theorem \ref{Dirichlet1} is a particular case of Theorem \ref{Thm2_Non-vanishing}, but we briefly give here the details. For $q$ prime, and an even and continuous function $\phi:\R \to \R$ with $\mathrm{supp}(\widehat{\phi}) \subseteq [-2,2]$ and such that $|\phi(x)| \ll (1+|x|)^{-1-\delta}$ for some $\delta>0$ as $|x|\to \infty$, Hughes and Rudnick \cite[Theorem 3.1]{HR} proved that
\begin{equation}\label{Hughes}
\begin{split}
\frac{1}{q\!-\!2}\sum_{\substack{\chi \, (\textup{mod }q) \\ \chi \ne \chi_0}} \sum_{\gamma_\chi} \phi\left(\gamma_\chi\frac{\log q}{2\pi}\right) &= \int_\mathbb{R} \phi(x) \, \dx + O\left(\frac{1}{\log q}\right),
\end{split}
\end{equation}
where $\frac{1}{2}+i\gamma_\chi$ runs over the non-trivial zeros of $L(s,\chi)$, counting multiplicity. This is estimate \eqref{20210430_11:41} in the regime $G = {\rm U}$ and $\Delta =2$. In this Paley-Wiener space, we have $K = K_{{\rm U}, 2\pi}$ given by
$$K(w,z) = \frac{\sin 2\pi (z - \overline{w})}{\pi (z   -  \overline{w})}.$$
Hence, our extremal function $\mathscr{M}$ defined in \eqref{20210504_14:13} - \eqref{20210504_13:44} is (recall that $t >0$ is fixed)
\begin{equation}\label{20210504_17:27}
\mathscr{M}(x) =  \frac{\left(\frac{\sin 2\pi (x - t)}{\pi (x   -  t)}\right)^2 + \left(\frac{\sin 2\pi (x + t)}{\pi (x   +  t)}\right)^2 + 2\, {\rm sgn}\big(\!\sin 4\pi t\big)\left(\frac{\sin 2\pi (x - t)}{\pi (x   -  t)}\right)\left(\frac{\sin 2\pi (x + t)}{\pi (x   +  t)}\right)}{\left(2 + \left|\frac{\sin 4 \pi t}{2\pi t}\right|\right)^2}.
\end{equation}
Note that $|\mathscr{M}(x)| \ll |x|^{-2}$ as $|x| \to \infty$. Proceeding as in \eqref{20210504_14:04} and using \eqref{Hughes} (with $\phi = \mathscr{M}$) one arrives at 
\begin{align*}
\frac{1}{q\!-\!2}\sum_{\substack{\chi \, (\textup{mod }q) \\ \chi \ne \chi_0}} \ \underset{s=\frac{1}{2}}{\textup{ord}} \ L\left(s+ \frac{2 \pi i t }{\log q},\chi\right)& \ \le \frac{1}{2(q\!-\!2)}\sum_{\substack{\chi \, (\textup{mod }q) \\ \chi \ne \chi_0}} \sum_{\gamma_\chi} \, \mathscr{M}\left(\gamma_\chi\frac{\log q}{2\pi}\right) = \frac{1}{2}\int_\mathbb{R} \mathscr{M}(x) \, \dx + O\left(\frac{1}{\log q}\right)\\
&   = \frac{1}{2} \, \left(1+\left|\frac{\sin 4 \pi t }{4\pi t}\right|\right)^{\!-1} + \ O\left(\frac{1}{\log q}\right).
\end{align*}
This completes the proof of Theorem \ref{Dirichlet1}.

\begin{figure} 
\includegraphics[scale=.6]{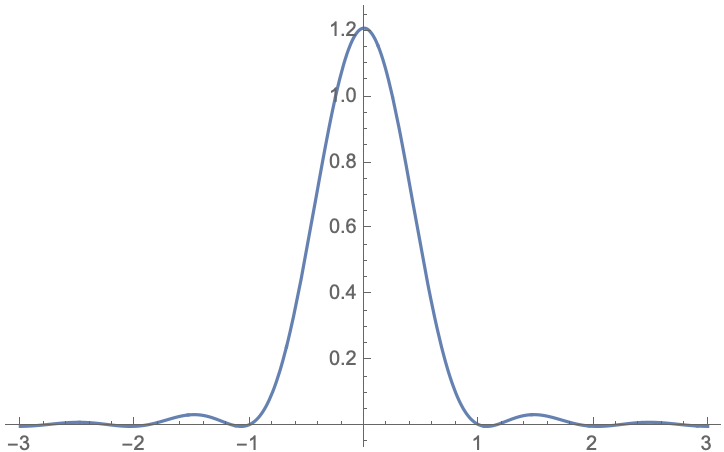} 
\includegraphics[scale=.6]{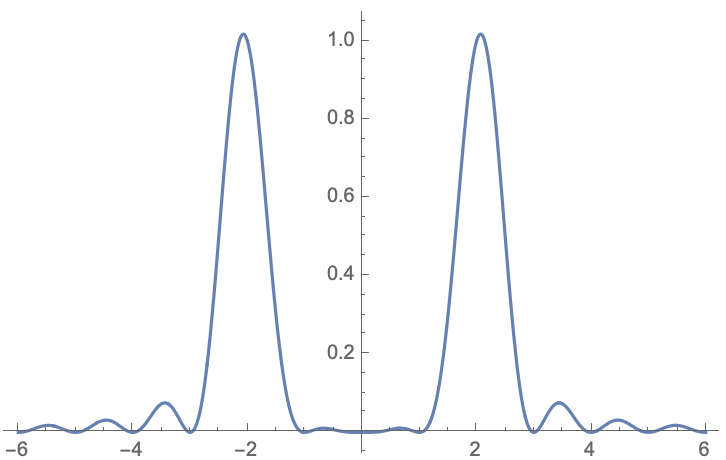} 
\caption{Plots of the extremal function $\mathscr{M}$ in \eqref{20210504_17:27} when $t = 1/4$ (on the left) and when $t = 2$ (on the right).}
\label{figure2}
\end{figure}

\section{Hilbert spaces and Fourier uncertainty}\label{Sec2}

We now establish the equivalence of norms between the spaces $\mathcal{H}_{G, \pi\Delta}$ and $\mathcal{H}_{\pi \Delta}$ defined in \S \ref{RKHS_Def}. We follow the outline of \cite[Lemma 12]{CCLM}, where a similar equivalence of norms was proved for Hilbert spaces associated to the pair correlation of zeros of the Riemann zeta-function. The fundamental tool for this purpose is the uncertainty principle for the Fourier transform, which appears in many different forms in the literature. We recall here the version of Donoho and Stark \cite{DS}, which is particularly well-suited for our argument.

\begin{lemma}{\rm (cf.~\cite[Theorem 2]{DS})}\label{uncertainty} Let $S, T \subset \R$ be measurable sets and let $F \in L^2(\R)$ with $\|F\|_{L^2(\R)} \!=\!1$. Then
$$|S|^{1/2}\,.\,|T|^{1/2} \geq  1 - \|F \, .\, {\bf 1}_{\R\setminus T}\|_{L^2(\R)} - \|\widehat{F} \, .\, {\bf 1}_{\R\setminus S}\|_{L^2(\R)} ,$$
where $|S|$ denotes the Lebesgue measure of the set $S$. 
\end{lemma}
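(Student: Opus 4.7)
The plan is to prove Lemma \ref{uncertainty} via the standard operator-theoretic approach to the Donoho-Stark uncertainty principle. I would introduce the time-limiting projection $P_T F := F \cdot {\bf 1}_T$ and the band-limiting projection $Q_S$ defined by $\widehat{Q_S F} := \widehat{F} \cdot {\bf 1}_S$. Both are orthogonal projections of norm at most $1$ on $L^2(\R)$, and by Plancherel's theorem one immediately has the identifications
\begin{equation*}
\|F - P_T F\|_{L^2(\R)} = \|F \cdot {\bf 1}_{\R \setminus T}\|_{L^2(\R)} \quad \text{and} \quad \|F - Q_S F\|_{L^2(\R)} = \|\widehat{F} \cdot {\bf 1}_{\R \setminus S}\|_{L^2(\R)}.
\end{equation*}

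Next, I would apply the triangle inequality combined with the contraction property of $Q_S$ to bound
\begin{equation*}
1 = \|F\|_{L^2(\R)} \leq \|F - Q_S F\|_{L^2(\R)} + \|F - P_T F\|_{L^2(\R)} + \|Q_S P_T F\|_{L^2(\R)},
\end{equation*}
where the middle term arises from the estimate $\|Q_S F - Q_S P_T F\|_{L^2(\R)} \leq \|F - P_T F\|_{L^2(\R)}$. At this point everything reduces to the operator-norm estimate $\|Q_S P_T F\|_{L^2(\R)} \leq |S|^{1/2} |T|^{1/2}$, after which rearranging yields the lemma.

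For this last estimate — the only place where the Lebesgue measures $|S|$ and $|T|$ actually enter — I would use Plancherel to rewrite $\|Q_S P_T F\|_{L^2(\R)} = \|{\bf 1}_S \cdot \widehat{P_T F}\|_{L^2(\R)}$, apply Cauchy-Schwarz against ${\bf 1}_S$, then use the trivial Hausdorff-Young bound $\|\widehat{P_T F}\|_{L^\infty(\R)} \leq \|P_T F\|_{L^1(\R)}$ followed by one more Cauchy-Schwarz on $\int_T |F(x)|\,\dx$. Chained together, these give
\begin{equation*}
\|Q_S P_T F\|_{L^2(\R)} \leq |S|^{1/2} \|\widehat{P_T F}\|_{L^\infty(\R)} \leq |S|^{1/2} \|P_T F\|_{L^1(\R)} \leq |S|^{1/2} |T|^{1/2} \|F\|_{L^2(\R)} = |S|^{1/2} |T|^{1/2}.
\end{equation*}
The argument is entirely soft analysis; the cases $|S| = \infty$ or $|T| = \infty$ render the inequality trivial, so no real obstacle arises. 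The conceptual takeaway — and the reason this lemma is useful in Section \ref{Sec2} — is that the smallness of the composition $Q_S P_T$ encodes the uncertainty principle: one cannot simultaneously concentrate $L^2$-mass in a small time set and in a small frequency set.
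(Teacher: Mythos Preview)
Your proof is correct and follows the standard Donoho--Stark argument. Note, however, that the paper does not actually prove this lemma: it is quoted directly from \cite[Theorem 2]{DS} as an external input, so there is no ``paper's own proof'' to compare against. What you have written is essentially the original proof from \cite{DS}, phrased in the time/band-limiting projection language, and all the steps --- the triangle-inequality decomposition $F = (F - Q_S F) + Q_S(F - P_T F) + Q_S P_T F$, the contraction bound for $Q_S$, and the chain of Cauchy--Schwarz/Hausdorff--Young estimates giving $\|Q_S P_T\|_{\mathrm{op}} \le |S|^{1/2}|T|^{1/2}$ --- are valid.
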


The following qualitative result is enough for our purposes, and we present it here in the main text for its simplicity. With some additional work, it is possible to go further and obtain the sharp forms of the inequalities presented in \eqref{20210505_11:03}, below. We discuss such matters in an appendix at the end of the paper. 

\begin{proposition} \label{Lem_equiv_norms} Let $G \in \{{\rm U, Sp, O, SO}({\rm even}), {\rm SO}(\rm{odd})\}$ and $\Delta >0$.
Let $F:\C \to \C$ be an entire function. Then $F\in \mathcal{H}_{\pi \Delta}$ if and only if $F\in \mathcal{H}_{G, \pi\Delta}$.  Moreover, there exist positive constants $C^+$ and $C^- $, that may depend on $\Delta$, such that 
\begin{equation}\label{20210505_11:03}
C^- \,\|F\|_{L^2(\R)} \le  \|F\|_{L^2(\R, W_G)} \le  C^+ \, \|F\|_{L^2(\R)}
\end{equation}
for all $F\in \mathcal{H}_{\pi \Delta}$.
\end{proposition}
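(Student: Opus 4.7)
The plan is to handle the upper and lower bounds in \eqref{20210505_11:03} separately, classifying the five weights $W_G$ from \eqref{densities} by their qualitative behavior on $\R$. Four of them are of the form ``$1$ plus a bounded continuous term plus possibly a Dirac mass at the origin,'' and $W_{{\rm SO}({\rm even})}$ is in fact uniformly bounded below by a positive constant on $\R$ because $\sin u/u > -1/4$. The only weight that degenerates on $\R$ is $W_{{\rm Sp}}(x) = 1 - \sin(2\pi x)/(2\pi x)$, which vanishes (simply) at $x=0$; and $W_{{\rm SO}({\rm odd})}$ is an additive perturbation of $W_{{\rm Sp}}$ by a non-negative delta mass at the origin. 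This classification dictates which cases are routine and where real analytic content is required.

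For the upper bound in \eqref{20210505_11:03}, the pointwise estimate $W_G(x)\le 2$ handles $G \in \{{\rm U},{\rm Sp},{\rm SO}({\rm even})\}$ at once. For $G \in \{{\rm O}, {\rm SO}({\rm odd})\}$, the distributional identities in \eqref{20210506_11:40_1} indicate that $W_G$ carries an additional $c\,\boldsymbol{\delta}_0$ component with $c \in \{1/2, 1\}$, contributing $c|F(0)|^2$ to $\|F\|_{L^2(\R, W_G)}^2$. I would control this using the classical Paley-Wiener reproducing kernel of Theorem \ref{Thm3_PW}: by Cauchy-Schwarz, $|F(0)| \le \|F\|_{L^2(\R)}\,\|K_{{\rm U},\pi\Delta}(0,\cdot)\|_{L^2(\R)} = \sqrt{\Delta}\,\|F\|_{L^2(\R)}$, so $|F(0)|^2 \le \Delta\,\|F\|_{L^2(\R)}^2$, which gives an admissible $C^+$ depending only on $\Delta$.

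For the lower bound, the cases $G\in\{{\rm U},{\rm O},{\rm SO}({\rm even})\}$ are immediate: $W_{{\rm U}}\equiv 1$, $W_{{\rm O}} \ge 1$ pointwise (the delta mass is non-negative), and $W_{{\rm SO}({\rm even})} \ge c_0 > 0$ for some absolute constant $c_0$ by the numerical lower bound on $\sin u/u$. Since $W_{{\rm SO}({\rm odd})}(x) \ge W_{{\rm Sp}}(x)$ pointwise on $\R\setminus\{0\}$ (again using that the delta mass is non-negative), the odd-orthogonal case reduces at once to the symplectic one. The only genuinely nontrivial case is therefore $G = {\rm Sp}$, and this is where Lemma \ref{uncertainty} will do the heavy lifting.

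For $G = {\rm Sp}$, the strategy is to invoke Fourier uncertainty to rule out concentration of $F \in \mathcal{H}_{\pi\Delta}$ near the sole zero of $W_{{\rm Sp}}$ at the origin. Fix $\varepsilon>0$ and set $T_\varepsilon := \{x \in \R : W_{{\rm Sp}}(x) \le \varepsilon\}$; by continuity and the fact that $W_{{\rm Sp}}(x) > 0$ for $x \ne 0$, $T_\varepsilon$ is a bounded neighborhood of the origin with $|T_\varepsilon|\to 0$ as $\varepsilon\to 0^+$. Normalize $\|F\|_{L^2(\R)}=1$. Since $\mathrm{supp}(\widehat F)\subset [-\Delta/2,\Delta/2]$ by Paley-Wiener, Lemma \ref{uncertainty} with $S = [-\Delta/2,\Delta/2]$ (so that $\|\widehat F\cdot\mathbf{1}_{\R\setminus S}\|_{L^2(\R)}=0$) gives
\[
\|F\cdot \mathbf{1}_{\R\setminus T_\varepsilon}\|_{L^2(\R)} \,\ge\, 1 - \sqrt{\Delta\,|T_\varepsilon|}\,.
\]
Choosing $\varepsilon$ small enough that $\sqrt{\Delta|T_\varepsilon|} \le 1/2$ and using the bound $W_{{\rm Sp}}\ge \varepsilon$ off $T_\varepsilon$, one obtains
\[
\int_\R |F(x)|^2\,W_{{\rm Sp}}(x)\,\dx \,\ge\, \varepsilon\,\|F\cdot \mathbf{1}_{\R\setminus T_\varepsilon}\|_{L^2(\R)}^2 \,\ge\, \varepsilon/4\,,
\]
producing an admissible constant $C^- = \sqrt{\varepsilon}/2$ depending only on $\Delta$. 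The main obstacle is precisely the point-degeneracy of $W_{{\rm Sp}}$; the role of the uncertainty principle is to convert the band-limited hypothesis into the quantitative anti-concentration statement needed to absorb it.
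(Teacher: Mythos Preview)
Your argument is correct. The upper bound is handled exactly as in the paper (the reproducing-kernel Cauchy--Schwarz bound $|F(0)|^2\le \Delta\|F\|_{L^2(\R)}^2$ is the same as the paper's Fourier-inversion computation), and for the only genuinely delicate lower bound, $G={\rm Sp}$, you invoke Donoho--Stark uncertainty in essentially the same way the paper does, just parametrizing the ``bad'' set as a sublevel set $T_\varepsilon=\{W_{\rm Sp}\le\varepsilon\}$ rather than a fixed interval $[-1/(8\Delta),1/(8\Delta)]$.

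The organizational difference is that the paper runs the uncertainty argument \emph{uniformly} for all five groups, bounding every $W_G$ below by $\eta^2\,{\bf 1}_{\R\setminus T}\le 1-|\sin(2\pi x)/(2\pi x)|$, whereas you dispose of $G\in\{{\rm U},{\rm O},{\rm SO}({\rm even})\}$ by direct pointwise bounds (in particular using $\inf_{u}\sin u/u>-1/4$ for ${\rm SO}({\rm even})$) and reduce ${\rm SO}({\rm odd})$ to ${\rm Sp}$. Your route is slightly more elementary in the easy cases; the paper's is more compact.

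One small omission: you establish \eqref{20210505_11:03} for $F\in\mathcal{H}_{\pi\Delta}$ but do not address the converse implication ``$F\in\mathcal{H}_{G,\pi\Delta}\Rightarrow F\in\mathcal{H}_{\pi\Delta}$'' in the first sentence of the proposition. The paper handles this in one line: since $F$ is entire (hence continuous, hence bounded near $0$) and $W_G$ is bounded below by a positive constant outside any neighborhood of the origin, $\|F\|_{L^2(\R,W_G)}<\infty$ forces $\|F\|_{L^2(\R)}<\infty$. You should add this remark.
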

\begin{proof}
If $F\in \mathcal{H}_{\pi \Delta}$, then ${\rm supp}(\widehat{F}) \subset [-\tfrac{\Delta}{2}, \tfrac{\Delta}{2}]$. By Fourier inversion, the Cauchy-Schwarz inequality, and Plancherel's theorem, we get
\begin{equation}\label{20210505_10:58}
|F(0)|^2 = \left| \int_{-\frac{\Delta}{2}}^{\frac{\Delta}{2}} \widehat{F}(y)\, \dy \,\right|^2\leq \Delta \big\|  \widehat{F}\big\|^2_{L^2(\R)} = \Delta \|F\|^2_{L^2(\R)}.
\end{equation}
Hence, using the trivial bound $\big| \frac{\sin 2\pi x}{2\pi x} \big| \leq 1$, it follows from \eqref{densities} and \eqref{20210505_10:58} that 
\begin{align*}
\|F\|_{L^2(\R, W_G)}^2 \leq (2 + \Delta)\, \|F\|_{L^2(\R)}^2,
\end{align*}
for any of the symmetry groups $G$. This shows that $\!F \in \mathcal{H}_{G, \pi\Delta}$ and verifies the rightmost inequality in \eqref{20210505_11:03}.

\smallskip

Conversely, assume now that $F \in \mathcal{H}_{G, \pi\Delta}$. Since $F$ is entire, it is in particular continuous at the origin, and this leads us to $\|F\|_{L^2(\R)} < \infty$. Hence $F\in \mathcal{H}_{\pi \Delta}$. It remains to show the existence of a positive constant $C^-$ in \eqref{20210505_11:03}, independent of $F$. Letting $T = [-\tfrac{1}{8\Delta}, \tfrac{1}{8\Delta}]$ and $S =  [-\tfrac{\Delta}{2}, \tfrac{\Delta}{2}]$ in Lemma \ref{uncertainty} we plainly get
\begin{equation}\label{20210505_11:32}
\frac12 \|F\|_{L^2(\R)} \leq \|F \, .\, {\bf 1}_{\R\setminus T}\|_{L^2(\R)}.
\end{equation}
Let $0 < \eta < 1$ ($\eta$ may depend on $\Delta$) be such that 
\begin{equation}\label{20210505_11:33}
\eta^2\, {\bf 1}_{\R\setminus T}(x) \le  1 - \left|\frac{\sin 2\pi x}{2\pi x}\right|.
\end{equation}
Then, for any of the symmetry groups $G$, from \eqref{20210505_11:32}, \eqref{densities} and \eqref{20210505_11:33} we have
\begin{align*}
\frac{\eta}{2} \,\|F\|_{L^2(\R)} \leq \eta\, \|F \, .\, {\bf 1}_{\R\setminus T}\|_{L^2(\R)} \leq \|F\|_{L^2(\R, W_G)}.
\end{align*}
This verifies the leftmost inequality in \eqref{20210505_11:03} and concludes the proof.
\end{proof}

\section{Reproducing kernels}\label{Sec_Rep_Kernels}
In this section we establish the explicit representations for the reproducing kernels presented in Theorems \ref{Thm3_PW} -- \ref{Thm_odd_orthogonal}. Throughout the section we continue to let $G \in \{{\rm U, Sp, O, SO}({\rm even}), {\rm SO}(\rm{odd})\}$ and $\Delta >0$.

\subsection{Basic properties}\label{Sub5_Prelim} We start our discussion by proving a few basic properties of the reproducing kernel $K = K_{G, \pi\Delta}$ that were already used in \S \ref{Two-Delta_Sec}. These essentially follow from the definition \eqref{20210504_11:59} and the fact that $W_G(x)$ is even and real-valued (as a distribution). Let $\langle \cdot\,, \cdot \rangle$ and $\| \cdot\|$ be the inner product and the norm in the Hilbert space $\mathcal{H}_{G, \pi \Delta}$, respectively. 

\smallskip

\noindent (P1) We have observed that, for each $w \in \C$, we have $K(w,w) = \|K(w, \cdot)\|^2 \geq 0$. If, for some $w \in \C$, we had $K(w,w) = 0$, this would mean that $K(w, \cdot) = {\bf 0}$ and hence $F(w) = \langle F, K(w, \cdot)\rangle =0$ for all $F \in \mathcal{H}_{G, \pi \Delta}$. However, the function $F(z) = \frac{\sin (\pi\Delta (z - w))}{\pi \Delta(z-w)}$ belongs to $\mathcal{H}_{\pi \Delta}$ (and hence to $\mathcal{H}_{G, \pi \Delta}$) and verifies $F(w)= 1$, a contradiction. Therefore, we must have $K(w,w) >0$ for all $w \in \C$. 

\smallskip

\noindent (P2) If $F \in \mathcal{H}_{G, \pi \Delta}$ and we let $H(z) = F(-z)$, using the fact that the density function $W_G$ is even we get
$$F(w) = H(-w) = \int_{\R} H(x) \, \overline{K(-w, x)} \,W_G(x)\,\dx  =\int_{\R} F(x) \, \overline{K(-w, -x)} \,W_G(x)\,\dx.$$
From the definition \eqref{20210504_11:59} of the reproducing kernel (and its uniqueness), we must have, for each $w, z \in \C$,
$$K(-w, -z) = K(w, z).$$

\smallskip

\noindent (P3) For each $w, z \in \C$, with $F = K(z, \cdot)$ in definition \eqref{20210504_11:59} we get
\begin{equation}\label{20210510_14:27}
K(z,w) = \int_{\R} K(z,x) \, \overline{K(w, x)} \,W_G(x)\,\dx = \overline{\int_{\R} K(w,x) \, \overline{K(z, x)} \,W_G(x)\,\dx} = \overline{K(w,z)}. 
\end{equation}

\smallskip

\noindent (P4) If $F \in \mathcal{H}_{G, \pi \Delta}$ then $F^*(z) := \overline{F(\overline{z})} \in \mathcal{H}_{G, \pi \Delta}$. Hence
\begin{align*}
F(w) = \overline{F^*(\overline{w})}  & = \overline{\int_{\R} F^*(x) \, \overline{K(\overline{w}, x)} \,W_G(x)\,\dx} =  \int_{\R} F(x) \, K(\overline{w}, x) \,W_G(x)\,\dx.
\end{align*}
From the uniqueness of the reproducing kernel, we are led to the conclusion that 
$$\overline{K(\overline{w}, \overline{z})} = K(w,z)$$
for each $w, z \in \C$. In particular, for $t \in \R$ the function $z \mapsto K(t,z)$ is real entire.

\smallskip

We are now ready to move into challenge of describing the reproducing kernels explicitly.

\subsection{Cases $G \in \{ {\rm U, O}\}$ and $\Delta >0$} When $G = {\rm U}$ and $\Delta >0$, we have $\mathcal{H}_{{\rm U}, \pi \Delta}$ being the classical Paley-Wiener space $\mathcal{H}_{\pi \Delta}$ for which the reproducing kernel can be easily computed via Fourier inversion:
\begin{equation*}
K_{{\rm U},\pi\Delta}(w,z)= K_{\pi\Delta}(w,z) = \dfrac{\sin\pi\Delta(z-\overline{w})}{\pi(z-\overline{w})}.
\end{equation*}
When $G = {\rm O}$ and $\Delta >0$, we need an adjustment to incorporate the term $\tfrac12 \boldsymbol{\delta}_0(x)$ in the definition \eqref{densities}. In fact, a direct verification in \eqref{20210504_11:59} shows that 
\begin{align}\label{20210505_16:54}
\begin{split}
K_{{\rm O},\pi\Delta}(w,z)& = K_{\pi\Delta}(w,z) - \bigg(\dfrac{K_{\pi\Delta}(w,0)}{2+\Delta}\bigg)K_{\pi\Delta}(0,z)\\
& = \dfrac{\sin\pi\Delta(z-\overline{w})}{\pi(z-\overline{w})} - \frac{1}{(2 + \Delta)} \left(\dfrac{\sin\pi\Delta z}{\pi z}\right) \left( \dfrac{\sin\pi\Delta \overline{w} }{\pi \overline{w}}\right).
\end{split}
\end{align} 
This establishes Theorems \ref{Thm3_PW} and \ref{Rep_Kernel_O}.

\subsection{Cases $G \in \{{\rm Sp, SO(even), SO(odd)}\}$ and $0 < \Delta \leq 1$} If $G \in \{{\rm SO(even), SO(odd)}\}$, observe from \eqref{20210506_11:40_1} that  
\begin{equation}\label{20210506_11:50}
\widehat{W_{G}}(y) = \widehat{W_{{\rm O}}}(y) \ \ {\rm for} \ \ y \in [-1,1].
\end{equation}
Hence, if $0 < \Delta \leq 1$, directly from the definition \eqref{20210504_11:59}, identity \eqref{20210506_11:42} (applied to $\phi(x) = F(x)\,\overline{K_{{\rm G},\pi\Delta}(w,x)}$, that verifies ${\rm supp}\big(\widehat{\phi}\,\big) \subset [-\Delta, \Delta]$ when $F \in \mathcal{H}_{{\rm G}, \pi \Delta}$), and \eqref{20210506_11:50}, we conclude that
\begin{align*}
\begin{split}
K_{{\rm SO(even)},\pi\Delta}(w,z) & = K_{{\rm SO(odd)},\pi\Delta}(w,z)  = K_{{\rm O},\pi\Delta}(w,z) \\
& = \dfrac{\sin\pi\Delta(z-\overline{w})}{\pi(z-\overline{w})} - \frac{1}{(2 + \Delta)} \left(\dfrac{\sin\pi\Delta z}{\pi z}\right) \left( \dfrac{\sin\pi\Delta \overline{w} }{\pi \overline{w}}\right).
\end{split}
\end{align*}
When $G = {\rm Sp}$ we notice that 
\begin{equation}\label{20210506_12:02}
\widehat{W}_{{\rm Sp}}(y) \, = \, \boldsymbol{\delta}_0(y) - \tfrac{1}{2} \,{\bf 1}_{[-1,1]}(y) = \boldsymbol{\delta}_0(y) - \tfrac12 = \widehat{\big({\bf 1} - \tfrac12\,\boldsymbol{\delta}_0\big)}(y) \ \ {\rm for} \ \ y \in [-1,1].
\end{equation}
Hence, for $0 < \Delta \leq 1$, using \eqref{20210506_11:42} and \eqref{20210506_12:02}, we may perform an adjustment similar to \eqref{20210505_16:54} to conclude that  
\begin{align}\label{20210506_12:25}
\begin{split}
K_{{\rm Sp},\pi\Delta}(w,z)& = K_{\pi\Delta}(w,z) + \bigg(\dfrac{K_{\pi\Delta}(w,0)}{2-\Delta}\bigg)K_{\pi\Delta}(0,z)\\
& = \dfrac{\sin\pi\Delta(z-\overline{w})}{\pi(z-\overline{w})} + \frac{1}{(2 - \Delta)} \left(\dfrac{\sin\pi\Delta z}{\pi z}\right) \left( \dfrac{\sin\pi\Delta \overline{w} }{\pi \overline{w}}\right).
\end{split}
\end{align} 
This establishes Theorems \ref{Rep_Kernel_SO(even)}, \ref{Thm_symplectic} and \ref{Thm_odd_orthogonal} in the regime $0 < \Delta \leq 1$. 

\subsection{Relation between {\rm SO(odd)} and {\rm Sp}} From \eqref{densities} we have $W_{{\rm SO}({\rm odd})}(x) =   W_{\rm Sp}(x) +\boldsymbol{\delta}_0(x)$. In such a situation, where the two densities differ by a Dirac delta, one can relate the reproducing kernels, as already exemplified in \eqref{20210505_16:54} and \eqref{20210506_12:25}. In this particular case, one can check directly from the definition \eqref{20210504_11:59} that, for any $\Delta >0$, 
\begin{align*}
K_{{\rm SO(odd)},\pi\Delta}(w,z) = K_{{\rm Sp},\pi\Delta}(w,z) - \frac{K_{{\rm Sp},\pi\Delta}(w,0)}{1 + K_{{\rm Sp},\pi\Delta}(0,0)}K_{{\rm Sp},\pi\Delta}(0,z).
\end{align*}
This concludes the proof of Theorem \ref{Thm_odd_orthogonal}.

\subsection{Interlude: considerations from Fredholm theory} \label{SubS_Interlude} We are left with the harder task of finding explicit representations for the reproducing kernels in the cases  $G \in \{{\rm Sp, SO(even)}\}$ and $\Delta >1$. We start with a useful auxiliary lemma.

\begin{lemma}\label{Lem8}
Let $\Delta >0$ and $w \in \C$. 
\begin{itemize}
\item[(i)] There exist unique functions $u_w^+$ and $u_w^-$ in $L^2(\R)$, with ${\rm supp}\,(u_w^{\pm}) \subset \big[\!-\frac{\Delta}{2}, \frac{\Delta}{2}\big]$, such that 
\begin{align}\label{20210506_17:52}
	u^{\pm}_w(y)\pm \dfrac{1}{2}\int_{y-1}^{y+1}u^{\pm}_w(s)\,\ds = e^{-2\pi iwy} \ \ \ {\rm for \ a.e.} \ \,y \in \big[\!-\tfrac{\Delta}{2}, \tfrac{\Delta}{2}\big].
\end{align}
\item[(ii)] Define entire functions $k_w^+$ and $k_w^-$ by
\begin{align*} 
k^{\pm}_w(z):=\int_{-\tfrac{\Delta}{2}}^{\tfrac{\Delta}{2}}u^{\pm}_w(y)\,e^{2\pi iyz}\, \dy. 
\end{align*}
Then 
\begin{equation}\label{20210506_18:28}
K_{{\rm SO(even)},\pi\Delta}(w,z) = \overline{k_w^+(\overline{z})} \ \ \ \ {\rm and}\ \ \ \ K_{{\rm Sp},\pi\Delta}(w,z) = \overline{k_w^-(\overline{z})}.
\end{equation}
\end{itemize}
\end{lemma}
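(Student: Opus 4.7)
For part (i), the plan is to view the integral equation as $(I \pm T)\, u_w^{\pm} = f_w$ in $L^2[-\Delta/2, \Delta/2]$, where $T$ is the self-adjoint compact integral operator
\[
(Tu)(y) := \dfrac{1}{2}\int_{[-\Delta/2, \Delta/2]\cap[y-1,y+1]} u(s)\,\ds,
\]
and $f_w(y) = e^{-2\pi i w y}$. Since the kernel $\tfrac{1}{2}{\bf 1}_{|y-s|\le 1}$ is symmetric and bounded on $[-\Delta/2, \Delta/2]^2$, the operator $T$ is Hilbert--Schmidt and self-adjoint, so by the Fredholm alternative it suffices to show that $I \pm T$ is injective. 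For this I would extend $u$ by zero to all of $\R$ and pair the equation $(I \pm T)u = {\bf 0}$ with $u$ in $L^2(\R)$: Plancherel together with $\widehat{\tfrac{1}{2}{\bf 1}_{[-1,1]}}(\xi) = \frac{\sin 2\pi \xi}{2\pi \xi}$ yields
\[
\int_\R \big|\widehat{u}(\xi)\big|^2 \left(1 \pm \frac{\sin 2\pi \xi}{2\pi \xi}\right)\d\xi = 0.
\]
The elementary inequality $|\sin x| \le |x|$ (with equality only at $x=0$) shows that the weight in parentheses is nonnegative and vanishes at most at a single point; since $u$ has compact support, $\widehat{u}$ is actually entire, and therefore the a.e.\ vanishing forces $u \equiv 0$. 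The Fredholm alternative then produces the unique $u_w^{\pm}$ as the solution with right-hand side $e^{-2\pi i w y}$.

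For part (ii), the plan is to verify directly that $K(w,z) := \overline{k_w^{\pm}(\overline z)}$ satisfies the reproducing identity \eqref{20210504_11:59}. Given $F \in \mathcal{H}_{\pi \Delta}$, unfolding the definition of $k_w^{\pm}$ and applying Fubini reduces the goal to
\[
F(w) = \int_{-\Delta/2}^{\Delta/2} u_w^{\pm}(y) \left(\int_\R F(x)\, e^{2\pi i y x}\, W_G(x)\,\dx\right)\dy.
\]
Writing $W_G(x) = 1 \pm \frac{\sin 2\pi x}{2\pi x}$ and using the multiplication formula $\int f\,\widehat{g} = \int \widehat{f}\,g$ with $g = \tfrac{1}{2}{\bf 1}_{[-1,1]}$, the inner integral evaluates to $\widetilde{F}(y) \pm \tfrac{1}{2}\int_{y-1}^{y+1} \widetilde{F}(s)\,\ds$, where $\widetilde{F}(y) := \widehat{F}(-y)$ is supported in $[-\Delta/2, \Delta/2]$. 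Swapping the order of integration (permitted by the symmetry of the kernel ${\bf 1}_{|y-s|\le 1}$) and invoking the defining equation \eqref{20210506_17:52} for $u_w^{\pm}$ converts the right-hand side into $\int_{-\Delta/2}^{\Delta/2} \widetilde{F}(s)\, e^{-2\pi i w s}\,\ds$, which, after the substitution $t=-s$, becomes $\int_{-\Delta/2}^{\Delta/2} \widehat{F}(t)\, e^{2\pi i w t}\,\dt = F(w)$ by Fourier inversion. Uniqueness of the reproducing kernel then yields the identifications in \eqref{20210506_18:28}.

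The main technical care is needed in the Fubini swap in part (ii): after switching, the quantity $u_w^{\pm}(s) \pm \tfrac{1}{2}\int_{s-1}^{s+1} u_w^{\pm}(y)\,\dy$ is defined for every $s \in \R$ but only equals $e^{-2\pi i w s}$ on $[-\Delta/2, \Delta/2]$. One must use the fact that $\widetilde{F}$ is supported in that interval to discard the contribution from $s$ outside it, and verify the required integrability to justify each interchange (including the distributional pairing implicit in the multiplication formula applied to $\widehat{W_G}$).
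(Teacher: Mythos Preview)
Your argument for part (ii) is correct and is essentially the paper's own proof, just unwound a bit more explicitly: the paper observes that the Fourier transform of $k_w^{\pm}(x)\big(1 \pm \tfrac{\sin 2\pi x}{2\pi x}\big)$ agrees with $e^{-2\pi i w y}$ on $I$ by \eqref{20210506_17:52}, and then applies the multiplication formula once to reach $F(w)$; your Fubini/swap computation is the same identity written out line by line.

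For part (i) your approach is correct but genuinely different from the paper's. The paper argues by a \emph{maximum principle}: if $u \pm Tu = {\bf 0}$, the integral representation forces $u$ to be absolutely continuous on $I$, and then evaluating at a leftmost point $y_0$ where $|u|$ attains its maximum $M>0$ yields the contradiction $M < M$, because the averaging integral on the left of $y_0$ is strictly below $M$. You instead pair the homogeneous equation with $u$, extend by zero, and use Plancherel to obtain $\int_\R |\widehat{u}(\xi)|^2\big(1 \pm \tfrac{\sin 2\pi \xi}{2\pi \xi}\big)\,\d\xi = 0$; since the weight is nonnegative and (in the minus case) vanishes only at $\xi=0$, while $\widehat{u}$ is entire, this forces $u\equiv 0$. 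Your route is arguably cleaner and ties the injectivity directly to the positivity of the density $W_{G}$, which is conceptually pleasant; the paper's maximum-principle argument, on the other hand, is purely real-variable and would survive in settings where no Plancherel identity is available.
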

\begin{proof} Throughout this proof let $I := \big[\!-\!\frac{\Delta}{2}, \frac{\Delta}{2}\big]$. 

\smallskip

\noindent Part (ii). Assume for a moment that we have proved part (i). The Fourier transform of the function $k^{\pm}_{w}(x)\big(1\pm \frac{\sin2\pi x}{2\pi x}\big)$ is $u^{\pm}_{w}(y) \pm \frac{1}{2} \big(u^{\pm}_{w}*{\bf 1}_{[-1,1]}\big)(y)$ and the latter, by \eqref{20210506_17:52}, agrees with $e^{-2\pi iwy}$ on $I$ (as $L^2$-functions). Therefore, by the multiplication formula for the Fourier transform, if $F \in \mathcal{H}_{\pi \Delta}$ (hence ${\rm supp}(\widehat{F}) \subset I$), we have
\begin{align*}
\int_{\R} F(x) \left( k^{\pm}_{w}(x)\left(1\pm \dfrac{\sin2\pi x}{2\pi x}\right)\right) \,\dx =  \int_{I} \widecheck{F}(y)\,e^{-2\pi iwy}\,\dy  = F(w).
\end{align*}
This leads us to \eqref{20210506_18:28}.

\smallskip

\noindent Part (i). The operator $T: L^2(I) \to L^2(I)$ defined by 
\begin{equation*}
(Tu)(y) := \frac{1}{2} \big(u*{\bf 1}_{[-1,1]}\big)(y) = \frac{1}{2}\int_{I}{\bf 1}_{[-1,1]}(y-s)\, u(s)\,\ds \ \ \ ({\rm for} \  y \in I)
\end{equation*}
is a Hilbert-Schmidt operator (and hence compact) since the kernel $(y,s) \mapsto \frac12\,{\bf 1}_{[-1,1]}(y-s)$ belongs to $L^2 (I \times I)$. In \eqref{20210506_17:52} we seek to solve the functional equations
$$(u \pm Tu)(y) = e^{-2\pi iwy} \ \ {\rm in} \ \ L^2(I).$$
By Fredholm's alternative, each of these equations has a unique solution in $L^2(I)$ if and only if the homogeneous equations 
$$u \pm Tu = {\bf 0} \ \ {\rm in} \ \ L^2(I)$$
admit only the trivial solution. Let us verify that this is indeed the case. Assume that $u \in L^2(I)$ is a solution of $u \pm Tu = {\bf 0}$. This means that 
\begin{equation}\label{20210506_19:04}
u(y) = \mp \dfrac{1}{2}\int_{I \cap [y-1, y+1]} u(s)\,\ds
\end{equation}
for a.e. $y \in I$. Hence, we may assume without loss of generality that $u$ is absolutely continuous on $I$, since the right-hand side of \eqref{20210506_19:04} is. We now argue with a maximum principle in mind. Let $M = \max_{y \in I}|u(y)|$, and assume that $M>0$. Let $y_0 \in I$ by the minimal value of the set $\{y \in I\ ; \ |u(y)| = M\}$. Then, from \eqref{20210506_19:04}, 
\begin{equation}\label{20210507_08:51}
M = |u(y_0)| \leq \dfrac{1}{2}\left(\int_{I \cap [y_0-1, y_0]} |u(s)|\,\ds  +\int_{I \cap [y_0, y_0+1]} |u(s)|\,\ds\right) < M,
\end{equation}
a contradiction. Note that, in our setup, the first of the integrals in \eqref{20210507_08:51} must be strictly less than $M$, while the second is less than or equal to $M$. Hence we must have $u = {\bf 0}$. This concludes the proof.
\end{proof}

Lemma \ref{Lem8} establishes that the functions $u_w^+$ and $u_w^-$, which are essentially the Fourier transforms of our reproducing kernels, are the functions in $L^2(I)$ given by
$$u_w^+ = ({\rm Id} + T)^{-1} \left(e^{-2\pi iw (\cdot)}\right) \ \ {\rm and} \ \ u_w^- = ({\rm Id} - T)^{-1} \left(e^{-2\pi iw (\cdot)}\right).$$
Inverting these operators is a task that involves a certain computational cost. Our proposed method has two stages. In the first stage, starting from equation \eqref{20210506_17:52}, one will successively differentiate and manipulate the equations in order to achieve a differential equation that must be satisfied by $u$ in each of a few subintervals of $I$. These are what we call the {\it descending steps}, and the difficulty is that the degree of the final differential equations satisfied by $u$ grow with the parameter $\Delta$. In the second stage, one must retrace the steps in the hierarchy of differentiation to figure out the constant terms that appeared (which, in our case, are in fact functions of the variable $w$). This ultimately leads to a linear system of equations (which is well-posed since Lemma \ref{Lem8} shows that a solution exists and is unique). These are what we call the {\it ascending steps}. In the next subsections, we run this method to solve the functional equation in the case $1 < \Delta \leq 2$ and similar ideas could be applied to treat the cases $\Delta >2$. Similar computational challenges and strategies appear in a related step in the approach of Freeman and Miller \cite{F, FM}, which is essentially equivalent to the discussion below in the particular case $w=0$ (see also the work of Bernard \cite{Bernard}).

\subsection{Case $G = {\rm SO(even)}$ and $1 < \Delta \leq 2$} \label{Subs_Delta=2} We now complete the proof of Theorem \ref{Rep_Kernel_SO(even)}. With the notation of Lemma \ref{Lem8}, fix $w \in \C$ and let $u = u_w^+$. We may assume that $u$ is an absolutely continuous function on the interval $[-\tfrac{\Delta}{2},\tfrac{\Delta}{2}]$, and that $u$ is zero on $\R \setminus [-\tfrac{\Delta}{2},\tfrac{\Delta}{2}]$ ($u$ is not necessarily a continuous function on $\R$). The function $u$ verifies 
 \begin{equation}\label{20210507_16:39_R1}
 u(y) + \dfrac{1}{2}\int_{y-1}^{y+1}u(s)\,\ds = e^{-2\pi iwy} \ \ \ {\rm for\ all} \ \  -\tfrac{\Delta}{2} \leq y \leq \tfrac{\Delta}{2}.
 \end{equation}
\subsubsection{Descending steps} By the fundamental theorem of calculus, since ${\rm supp}(u) \subset [-\tfrac{\Delta}{2},\tfrac{\Delta}{2}]$, we get 
\begin{align}
u'(y) - \tfrac12 u(y-1) &= -2\pi i w \,e^{-2\pi iwy} \ \ \ {\rm for}  \ \ 1 - \tfrac{\Delta}{2} < y <  \tfrac{\Delta}{2}; \label{20210507_11:33_R1}\\
u'(y)  &= -2\pi i w \,e^{-2\pi iwy} \ \ \ {\rm for}  \ \  \tfrac{\Delta}{2}-1 < y <  1-\tfrac{\Delta}{2}; \label{20210507_11:33_R2}\\
u'(y) + \tfrac12 u(y+1) &= -2\pi i w \,e^{-2\pi iwy} \ \ \ {\rm for}  \ \ -\tfrac{\Delta}{2} < y <  \tfrac{\Delta}{2}-1.\label{20210507_11:34_R1}
\end{align}
Note that, if $\Delta =2$, equation \eqref{20210507_11:33_R2} should be disregarded. The general solution of \eqref{20210507_11:33_R2} is 
$$u(y) = e^{-2 \pi i wy} + D(w) \ \ \ {\rm for}  \ \  \tfrac{\Delta}{2}-1 \leq y \leq  1-\tfrac{\Delta}{2}. $$
For $1 - \tfrac{\Delta}{2} < y <  \tfrac{\Delta}{2}$, by differentiating \eqref{20210507_11:33_R1}, and adding up half of \eqref{20210507_11:34_R1} with $y$ replaced by $y-1$, we get
\begin{equation}\label{20210507_11:48_R1}
u''(y) + \frac14 u(y) =  \big(-4\pi^2w^2 - \pi i w\,e^{2\pi iw}\big) \,e^{-2\pi iwy}.
\end{equation}
Assume for a moment that $w \neq \pm 1/4 \pi$. The general solution of \eqref{20210507_11:48_R1} is 
\begin{equation}\label{20210507_12:27_R1}
u(y) = A(w)\,e^{iy/2}+B(w)\,e^{-iy/2}+C(w)\,e^{-2\pi iwy} \ \ {\rm for} \ \ 1 - \tfrac{\Delta}{2} \leq y \leq  \tfrac{\Delta}{2},
\end{equation}
where
$$C(w)= \frac{-16\pi^2w^2 -4 \pi i w\,e^{2\pi iw}}{1 - 16 \pi^2w^2}.$$
An analogous reasoning shows that
\begin{equation}\label{20210507_12:28_R1}
u(y) = A_1(w)\,e^{iy/2}+B_1(w)\,e^{-iy/2}+ \overline{C(\overline{w})}\,e^{-2\pi iwy}  \ \ {\rm for} \ \ -\tfrac{\Delta}{2} \leq y \leq  \tfrac{\Delta}{2}-1.
\end{equation}

\subsubsection{Ascending steps} At this point, we must retrace the steps in the hierarchy of differentiation to figure out the constant terms (in the variable $y$). Using \eqref{20210507_12:27_R1} and \eqref{20210507_12:28_R1} in \eqref{20210507_11:33_R1} (or in \eqref{20210507_11:34_R1}) we get
\begin{equation*}
A_1(w) =i\,e^{i/2}\, A(w)  \ \ \ {\rm and} \ \ \ B_1(w) = -i\,e^{-i/2} \,B(w).
\end{equation*}
It is then enough to determine $A(w)$, $B(w)$ and $D(w)$. We need three pieces of information to set up a linear system of equations. There are different ways of doing this, by evaluating \eqref{20210507_16:39_R1} at particular points and/or by evaluating the continuity conditions of $u$ at the junction points of the internal subintervals. We choose the following system:
\begin{align}\label{20210510_11:18_R1}
\begin{cases}
a_1 A(w) + \overline{a_1} \,B(w) + \left( \tfrac{2 - \Delta}{2}\right) D(w)= E(w) \,;\\
b_1 A(w) + \overline{b_1} \,B(w) + \left( \tfrac{2 - \Delta}{2}\right) D(w)= \overline{E(\overline{w})}\,;\\
\tau A(w) + \overline{\tau} B(w) - 2D(w) = F(w),
\end{cases}
\end{align}
where $a_1  := e^{\Delta i/4} + i e^{(2-\Delta)i/4} - ie^{\Delta i/4}$\,;\, $b_1 :=e^{ \Delta i/4} +  ie^{ (2-\Delta)i /4} - e^{(2- \Delta) i/4}$\,;\, $\tau := e^{(2-\Delta)i/4}  + i e^{\Delta i/4}$, 
\begin{align*}
E(w) := \frac{2\cos(\pi\Delta w)+ 4 \pi i w e^{- \pi \Delta i w } - e^{\pi (2 -\Delta) i w }  }{1 - 16 \pi^2 w^2}  - \frac{\sin(\pi (2 - \Delta) w )}{2\pi w (1 - 16 \pi^2 w^2)}.
\end{align*}
and
\begin{align*}
F(w) := \frac{2\cos(\pi(2-\Delta)w) - 8 \pi w \sin(\pi \Delta w)}{1 - 16 \pi^2 w^2}.
\end{align*}
The first two equations in \eqref{20210510_11:18_R1} come from the evaluation of \eqref{20210507_16:39_R1} at the points $y=\tfrac{\Delta}{2}$ and $y=-\tfrac{\Delta}{2}$. The third equation in \eqref{20210510_11:18_R1} (which is not necessary if $\Delta =2$) comes from taking the continuity condition of $u$ at the points  $y = 1 - \tfrac{\Delta}{2}$ and $y = \tfrac{\Delta}{2} -1$ and adding them up to get matters in a slightly more symmetric form.

\smallskip

In \eqref{20210510_11:18_R1}, multiplying the third equation by $ \left( \tfrac{2 - \Delta}{4}\right)$ and adding it up to the first two equations yields
\begin{align*}
\begin{cases}
a A(w) + \overline{a} \,B(w) = G(w) \,;\\
b A(w) + \overline{b} \,B(w) = \overline{G(\overline{w})}\,,
\end{cases}
\end{align*}
with $a := a_1 + \left( \tfrac{2 - \Delta}{4}\right)\tau$\,;\, $b := b_1 + \left( \tfrac{2 - \Delta}{4}\right)\tau$\,; and 
\begin{equation*}
G(w):= E(w) + \left( \tfrac{2 - \Delta}{4}\right)F(w).
\end{equation*}
At this point we get 
\begin{equation}\label{20210514_10:59}
A(w) = \frac{\overline{a}\,\overline{G(\overline{w})} - \overline{b}\,G(w)}{\overline{a} b - a\overline{b}} \ \ ;  \ \ B(w) = \frac{b\,G(w) - a\,\overline{G(\overline{w})}}{\overline{a}b - a \overline{b}}.
\end{equation}
One can check that for $1 < \Delta \leq 2$ the denominator in \eqref{20210514_10:59} is indeed non-zero. We can then find $D(w)$ from the third equation of \eqref{20210510_11:18_R1}. 

\smallskip

We have now completely determined $u = u_w^+$. Then 
\begin{align*}
&k^+_w(z)  =\int_{-\frac{\Delta}{2}}^{\frac{\Delta}{2}}u(y)\,e^{2\pi iyz}\, \dy  = \int_{-\frac{\Delta}{2}}^{\frac{\Delta}{2}-1 }u(y)\,e^{2\pi iyz}\, \dy + \int_{\frac{\Delta}{2}-1}^{1-\frac{\Delta}{2} }u(y)\,e^{2\pi iyz}\, \dy + \int_{1-\frac{\Delta}{2}}^{\frac{\Delta}{2} }u(y)\,e^{2\pi iyz}\, \dy\\
& =  \frac{A(w) \big(i e^{-2\pi i z} + 1\big)\big(e^{\Delta( \pi i z + i/4)} - e^{(2-\Delta)(\pi i z + i/4)}\big)}{2\pi i z + i/2} + \frac{B(w)\big(\!\!-i e^{-2\pi i z} + 1\big)\big(e^{\Delta( \pi i z - i/4)} - e^{(2-\Delta)(\pi i z - i/4)}\big)}{2\pi i z - i/2} \\
& \qquad \qquad + \frac{\overline{C(\overline{w})}\big(- e^{-\pi \Delta i (z - w)} + e^{-\pi(2-\Delta) i (z - w)}\big)   + C(w)\big(e^{\pi\Delta i (z-w)} - e^{\pi (2 - \Delta) i (z-w)}\big)}{2\pi i (z-w)}\\
&  \qquad \qquad +  \frac{D(w) \sin(\pi (2-\Delta) z)}{\pi z} + \frac{\sin(\pi (2-\Delta) (z - w))}{\pi(z-w)}.
\end{align*}
From $\eqref{20210506_18:28}$ we have $K_{{\rm SO(even)},\pi\Delta}(w,z) = \overline{k_w^+(\overline{z})}$. This leads us to the proposed explicit expression for $K_{{\rm SO(even)},\pi\Delta}(w,z)$ in Theorem \ref{Rep_Kernel_SO(even)}. Finally, from \eqref{20210510_14:27}, the function $w \mapsto K_{{\rm SO(even)},\pi\Delta}(\overline{w},z)$ is entire. Hence, the cases $w = \pm1/4\pi$ that we had left behind are removable singularities.

\subsection{Case $G = {\rm Sp}$ and $1 < \Delta \leq 2$} \label{subsec_Sp_1-2}The procedure is entirely analogous to what we did in \S\ref{Subs_Delta=2}, following it line by line, now with $u = u_w^-$ solution of 
 \begin{equation*}
 u(y) - \dfrac{1}{2}\int_{y-1}^{y+1}u(s)\,\ds = e^{-2\pi iwy} \ \ \ {\rm for\ all} \ \  -\tfrac{\Delta}{2} \leq y \leq \tfrac{\Delta}{2}.
 \end{equation*}
We omit most of the details for simplicity. One just has to be careful with some sign changes that will appear in the auxiliary constants and functions. In the statement of Theorem \ref{Thm_symplectic} we denote the auxiliary constants and functions with the same letters as in Theorem \ref{Rep_Kernel_SO(even)} to reinforce the indication that they play the exact same roles as in \S\ref{Subs_Delta=2}. In addition to the auxiliary variables stated in Theorem \ref{Thm_symplectic}, we have the following ones that appear along \S\ref{Subs_Delta=2}:
\begin{align*}
A_1(w) =-i\,e^{i/2}\, A(w)  \ \ \ &; \ \ \ B_1(w) = i\,e^{-i/2} \,B(w)\,;\\
a_1= e^{\Delta i /4} - i e^{(2-\Delta)i/4} + ie^{ \Delta i/4} \ \ \ &; \ \ \ b_1 = e^{\Delta i/4} - ie^{(2-\Delta) i/4} - e^{(2- \Delta) i/4}.
\end{align*}
The system of equations \eqref{20210510_11:18_R1} becomes
\begin{align*}
\begin{cases}
a_1 A(w) + \overline{a_1} \,B(w) - \left( \tfrac{2 - \Delta}{2}\right) D(w)= E(w) \,;\\
b_1 A(w) + \overline{b_1} \,B(w) - \left( \tfrac{2 - \Delta}{2}\right) D(w)= \overline{E(\overline{w})}\,;\\
\tau A(w) + \overline{\tau} B(w) - 2D(w) = F(w),
\end{cases}
\end{align*}
with 
\begin{align*}
E(w) := \frac{2\cos(\pi\Delta w)- 4 \pi i w e^{- \pi \Delta i w } - e^{\pi (2 -\Delta) i w }  }{1 - 16 \pi^2 w^2}  + \frac{\sin(\pi (2 - \Delta) w )}{2\pi w (1 - 16 \pi^2 w^2)}
\end{align*}
and
\begin{align*}
F(w) := \frac{2\cos(\pi(2-\Delta)w) + 8 \pi w \sin(\pi \Delta w)}{1 - 16 \pi^2 w^2}.
\end{align*}
This leads to the result stated in Theorem \ref{Thm_symplectic}.

 \section{De Branges spaces and the existence of low-lying zeros} 
This section brings a general discussion on the problem of establishing the {\it existence} of low-lying zeros for families of $L$-functions, generalizing the work of Hughes and Rudnick in \cite[Theorem 8.1]{HR} for all the symmetry types, providing an alternative approach to the work of Bernard \cite{Bernard}. We shall obtain Theorem \ref{Thm_Db_9} as a corollary of a much more general extremal result (see Theorem \ref{DB_Thm} below).

\subsection{The Hughes--Rudnick extremal problem} Recall that our goal is to bound the quantity 
\begin{equation*}
\beta(\mc{F}) := \limsup_{Q \to \infty} \ \min_{\substack{ \gamma_f \notin \mc{Z}(f) \\ f \in \mc{F}(Q)}} \left|\frac{\gamma_{f} \,\log c_f}{2\pi}\right|,
\end{equation*}
introduced in \S \ref{Intro_low_lying_zeros}. Suppose we have an even (real-valued) Schwartz function $\phi$ with ${\rm supp} (\widehat{\phi}) \subset (-\Delta, \Delta)$ that verifies $\phi(x) \leq 0$ for $|x| \leq a$, and $\phi(x) \geq 0$ for $|x| \geq a$, for some $a >0$. If
\begin{equation}\label{20210907_12:14am}
\lim_{Q \to\infty} \, \frac{1}{|\mathcal{F}(Q)|} \sum_{f\in \mathcal{F}(Q)} \, \sum_{\gamma_f \notin \mc{Z}(f)} \phi\!\left( \gamma_f \frac{\log c_f}{2\pi}\right) < 0,
\end{equation}
then we can conclude that $\beta(\mc{F}) \leq a$. Hughes and Rudnick \cite{HR} make the natural choice $\phi(x) = (x^2 - a^2)\,g(x)$, with $g$ even, Schwartz, non-negative and ${\rm supp} (\widehat{g}) \subset (-\Delta, \Delta)$ (and hence ${\rm supp} (\widehat{\phi}) \subset (-\Delta, \Delta)$ by the Paley-Wiener theorem). From \eqref{20210907_11:58}  and \eqref{20210907_12:14am} we then need
\begin{equation*}
0 > \lim_{Q\to\infty} \, \frac{1}{|\mathcal{F}(Q)|} \sum_{f\in \mathcal{F}(Q)} \sum_{\gamma_f \notin \mc{Z}(f)} \phi\!\left( \gamma_f \frac{\log c_f}{2\pi}\right) = \int_\mathbb{R} \phi(x) \, W_{G^\sharp}(x) \, \d x = \int_\mathbb{R} (x^2 - a^2)\,g(x) \, W_{G^\sharp}(x) \, \d x,
\end{equation*}
which is equivalent to 
\begin{equation}\label{20210907_14:30}
a > \dfrac{\left(\int_\mathbb{R} x^2 \,g(x) \, W_{G^\sharp}(x)  \, \d x\right)^{1/2}}{\left(\int_\mathbb{R} g(x) \, W_{G^\sharp}(x) \, \d x\right)^{1/2}}.
\end{equation}
Recalling Krein's decomposition theorem \cite[p.~154]{A}, that such $g$ must be of the form $g(x) = |F(x)|^2$ for some $F \in \mathcal{H}_{G^{\sharp}, \pi \Delta}$, this leads us to consider the following extremal problem: find
\begin{equation}\label{20210907_13:49}
{\bf A}_{G, \pi\Delta} := \inf_{{\bf 0} \neq F \in \mathcal{H}_{G^{\sharp}, \pi \Delta}}  \frac{\|z \, F\|_{L^2(\R, \, W_{G^\sharp})}}{\|F\|_{L^2(\R, \,W_{G^\sharp})}}.
\end{equation}

We shall see below that the infimum in \eqref{20210907_13:49} is attained by an even extremal function ${\bf 0} \neq \frak{F} \in\mathcal{H}_{G^{\sharp}, \pi \Delta}$. With an approximation argument along the same lines of \S \ref{Approx_Arg}, one may choose a Schwartz function $g$ (close to $|\frak{F}|^2$) in \eqref{20210907_14:30}, yielding any arbitrary $a > {\bf A}_{G, \pi\Delta}$. The conclusion is that 
\begin{equation}\label{20210914_13:55}
\beta(\mc{F}) \leq {\bf A}_{G, \pi\Delta}.
\end{equation}

\subsection{De Branges spaces} \label{DeBranges_Intro} Our aim is to connect the extremal problem \eqref{20210907_13:49} to the beautiful theory of de Branges spaces of entire functions \cite{Branges}. In order to do so, we start by very briefly reviewing some of the main elements of this theory, and we invite the reader to consult \cite[Chapters 1 and 2]{Branges} for the relevant additional details. Recall that, for an entire function $F :\C \to \C$, we define the entire function $F^*:\C \to \C$ by $F^*(z) := \overline{F(\overline{z})}$. We denote by $\C^+ = \{z \in \C \ ; \ {\rm Im}(z) >0\}$ the open upper half-plane.

\smallskip

Given a {\it Hermite-Biehler} function $E: \C \to \C$, i.e. an entire function that verifies $|E(\overline{z})| < |E(z)|$ for all $z \in \C^+$, the de Branges space $\mc{H}(E)$ associated to $E$ is the space of entire functions $F:\C \to \C$ such that
\begin{equation}\label{20210913_14:31}
\|F\|_{\mc{H}(E)}^2 := \int_{\R} |F(x)|^{2} \, |E(x)|^{-2} \, \dx <\infty\,,
\end{equation}
and such that $F/E$ and $F^*/E$ have bounded type and non-positive mean type\footnote{A function $F$, analytic in $\C^{+}$, has {\it bounded type} if it can be written as the quotient of two functions that are analytic and bounded in $\C^{+}$. If $F$ has bounded type in $\C^{+}$, from its Nevanlinna factorization \cite[Theorems 9 and 10]{Branges} one has $v(F) := \limsup_{y \to \infty} \, y^{-1}\log|F(iy)| <\infty$. The number $v(F)$ is called the {\it mean type} of $F$.} in $\C^{+}$. This is a reproducing kernel Hilbert space with inner product 
\begin{equation*}
\langle F, G \rangle_{\mc{H}(E)} :=  \int_{-\infty}^\infty F(x) \, \ov{G(x)} \, |E(x)|^{-2} \, \dx.
\end{equation*}
The reproducing kernel (that we keep denoting by $K(w,\cdot)$) is given by (see \cite[Theorem 19]{Branges})
\begin{equation}\label{20210809_11:30am}
2\pi i (\ov{w}-z)K(w,z) = E(z)E^*(\ov{w}) - E^*(z)E(\ov{w}). 
\end{equation}
Associated to $E$, one can consider a pair of real entire functions $A$ and $B$ such that $E(z) = A(z) -iB(z)$. These companion functions are given by
\begin{equation}\label{20210909_18:29}
A(z) := \frac12 \big(E(z) + E^*(z)\big) \ \ \ {\rm and}\ \ \  B(z) := \frac{i}{2}\big(E(z) - E^*(z)\big)\,, 
\end{equation}
and note that they can only have real roots, by the Hermite-Biehler condition. The reproducing kernel has the alternative representation 
\begin{equation}\label{20210913_13:57}
\pi (z - \ov{w})K(w,z) = B(z)A(\ov{w}) - A(z)B(\ov{w})\,,
\end{equation}
and, when $z = \ov{w}$, one has
\begin{equation}\label{Intro_Def_K}
\pi K(\ov{z}, z) = B'(z)A(z) - A'(z)B(z).
\end{equation}
We shall only be interested in the situation where $K(w,w) = \|K(w, \cdot)\|^2_{\mc{H}(E)} > 0$ for all $w \in \C$, which is equivalent (see \cite[Lemma 11]{HV}) to the statement that $E$ has no real zeros. In this case, from \eqref{Intro_Def_K}, one sees that all the roots of $A$ and $B$ are simple. The set of functions $\Gamma_A:= \{K(\xi, \cdot)\ ;  A(\xi) = 0\}$ is always an orthogonal set in $\mc{H}(E)$. If $A \notin \mc{H}(E)$, the set $\Gamma_A$ is an orthogonal basis of $\mc{H}(E)$ and, if $A \in \mc{H}(E)$, the only elements of $\mc{H}(E)$ that are orthogonal to $\Gamma_A$ are the constant multiples of $A$ (see \cite[Theorem 22]{Branges} with $\alpha = \pi/2$). In particular, if $A \notin \mc{H}(E)$, for every $F \in \mc{H}(E)$ we have
\begin{equation}\label{20210809_11:01}
F(z) = \sum_{A(\xi) = 0}  \frac{F(\xi)}{K(\xi, \xi)} \, K(\xi, z) \ \ \ {\rm and} \ \ \   \|F\|_{\mc{H}(E)}^2 = \sum_{A(\xi) = 0}  \frac{\big| F(\xi)\big|^2 }{K(\xi, \xi)}.
\end{equation}
The most basic example of a de Branges space is the classical Paley-Wiener space $\mc{H}_{\pi \Delta}$, in which one can take $E(z) = e^{-\pi \Delta i z}$, $A(z) = \cos(\pi \Delta z)$ and $B(z) = \sin(\pi \Delta z)$, and \eqref{20210809_11:01} is well-known by Fourier analysis methods. In full generality, \eqref{20210809_11:01} provides a remarkable extension of Plancherel's identity and plays an important role in our approach.

\smallskip

We now draw the reader's attention to another relevant ingredient in our strategy: the ability to construct a de Branges space that is isometric to a given reproducing kernel Hilbert space of entire functions. That is, instead of constructing $K$ from $E$ as in \eqref{20210809_11:30am}, sometimes it is also possible to construct $E$ from $K$ (not necessarily in a unique way). 

\smallskip

In our particular situation, letting $K = K_{G^{\sharp}, \pi\Delta}$ be the reproducing kernel of the Hilbert space $\mathcal{H}_{G^{\sharp}, \pi \Delta}$ we define the function $L = L_{G^{\sharp}, \pi\Delta}$ by
\begin{equation*}
L(w,z) := 2\pi i (\overline{w} - z) K(w,z)\,,
\end{equation*}
and the entire function $E = E_{G^{\sharp}, \pi\Delta}$ by 
\begin{equation}\label{20210909_19:00}
E(z) := \frac{L(i,z)}{L(i,i)^{\frac12}}.
\end{equation}
One can show that $E$ is a Hermite-Biehler function such that 
\begin{equation}\label{20210909_19:09}
L(w,z) = E(z)E^*(\ov{w}) - E^*(z)E(\ov{w})\,;
\end{equation}
see \cite[Appendix A]{CCLM} for the details. This implies \cite[Theorem 23]{Branges} that the Hilbert space $\mathcal{H}_{G^{\sharp}, \pi \Delta}$ is equal isometrically to the de Branges space $\mc{H}(E)$, which yields the key identity 
\begin{equation}\label{20210809_11:39am}
\|F\|^2_{L^2(\R, W_{G^{\sharp}})} = \|F\|^2_{\mathcal{H}_{G^{\sharp}, \pi \Delta}}  = \int_{\R} |F(x)|^2 \,W_{G^{\sharp}}(x)\,\dx = \int_{\R} |F(x)|^{2} \, |E(x)|^{-2}\,\dx = \|F\|_{\mc{H}(E)}^2.
\end{equation}
Note that one does not necessarily have $W_{G^{\sharp}}(x) = |E(x)|^{-2}$ a.e.. Writing $E(z) = A(z) -iB(z)$, with $A$ and $B$ as in \eqref{20210909_18:29}, from the basic properties of $K$ in \S\ref{Sub5_Prelim}, one plainly sees that $A$ is even (which is going to be important for our argument) and $B$ is odd. We also have that $A(0) \neq 0$, since otherwise we would have a double zero at $x=0$ (recall that $A$ is even), and by \eqref{Intro_Def_K} this would contradict the fact that $K(0,0)>0$.

\smallskip

\noindent {\sc Remark}: The construction of $E$ such that $\mathcal{H}_{G^{\sharp}, \pi \Delta}$ is equal isometrically to $\mc{H}(E)$ is not unique. In fact, for any $\alpha \in \C^+$, one can choose $E_\alpha(z):= L(\alpha,z)/L(\alpha, \alpha)^{\frac12}$ in place of \eqref{20210909_19:00}. This is a Hermite-Biehler function, identity \eqref{20210909_19:09} continues to hold (with $E$ replaced by $E_{\alpha}$) and the corresponding de Branges space $\mc{H}(E_{\alpha})$ is equal isometrically to $\mathcal{H}_{G^{\sharp}, \pi \Delta}$; see \cite[Appendix A]{CCLM}. We choose $\alpha = i$ for simplicity. Generally, the functions $E_{\alpha}$ are different for different values of $\alpha$, and one actually has a family of identities given by \eqref{20210809_11:39am}. If one writes $E_{\alpha} = A_{\alpha} - i B_{\alpha}$ as usual, one can show that whenever $\alpha = i t$, with $t >0$, then $A_{it}/A_i$ and $B_{it}/ B_i$ are real positive numbers (not necessarily the same). In particular $A_{it}$ and $A_i$ have the same zeros. When $\alpha$ is not purely imaginary, the companion function $A_\alpha$ is not necessarily even. For example, for the Paley-Wiener space $\mc{H}_{\pi\Delta}$, one starts with $K(w,z) = \sin (\pi\Delta (z - \ov{w}))/(\pi (z - \ov{w}))$ and hence $L(w,z) = -2i\sin (\pi\Delta(z - \ov{w}))$. If $\alpha = it$, with $t>0$, we have
\begin{equation}\label{20210909_22:49}
E_{it}(z)  =  \frac{2\sinh \pi \Delta t \, \cos \pi \Delta z - 2i \,\cosh \pi \Delta t\,\sin \pi \Delta z}{\big(2 \sinh 2\pi \Delta t\big)^{\frac{1}{2}}}.
\end{equation}
In this case, it is interesting to notice that none of these functions $E_{\alpha}$, for $\alpha \in \C^+$, is actually equal to the `classical' generator $E(z) = e^{-\pi \Delta i z}$. If we let $t \to \infty$ in \eqref{20210909_22:49} we would then recover the classical one.

\subsection{Solving the extremal problem in a broader setting} We are now in position to state and prove the main result of this section.
\begin{theorem} \label{DB_Thm}Let $E$ be a Hermite-Biehler function with no real zeros and such that $z \mapsto E(iz)$ is real entire.
Let $\mc{H}(E)$ be the associated de Branges space with reproducing kernel $K$. Let $A := \frac{1}{2} \big(E + E^*\big)$ and let $\xi_0$ be the smallest positive real zero of $A$. Then 
\begin{equation*}
{\bf A}_{E} := \inf_{{\bf 0} \neq F \in \mathcal{H}(E)} \frac{\|z \, F\|_{\mc{H}(E)}}{\|F\|_{\mc{H}(E)}} = \xi_0.
\end{equation*}
The unique extremizers are $\frak{F}(z) = c\big( K(\xi_0, z) + K(\xi_0, -z)\big)$, with $c \in \C\setminus\{0\}$. 
\end{theorem}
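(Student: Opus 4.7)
The plan is to reduce the extremal problem to a weighted-sum inequality via the orthogonal basis of $\mc{H}(E)$ supplied by \eqref{20210809_11:01}, and to identify the extremizer using the explicit representation \eqref{20210913_13:57} of $K$ at the real zeros of $A$. To begin, I will extract the symmetry consequences of the hypothesis that $z\mapsto E(iz)$ is real entire. Unwrapping $g=g^*$ for $g(z):=E(iz)$ gives $E(z)=E^*(-z)$, which via \eqref{20210909_18:29} is equivalent to $A$ being even and $B$ being odd. Since $E$ has no real zeros and $B(0)=0$, we have $A(0)\neq 0$, so the zero set of $A$ is a symmetric collection $\{\pm\xi_n\}_{n\geq 0}$ with $0<\xi_0<\xi_1<\cdots$. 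The same symmetry, applied to \eqref{20210809_11:30am}, yields $K(w,z)=K(-w,-z)$ and, in particular, $K(\xi_n,\xi_n)=K(-\xi_n,-\xi_n)$.

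For the lower bound ${\bf A}_E\geq\xi_0$, I will work in the generic case $A\notin\mc{H}(E)$, so that \eqref{20210809_11:01} provides an orthogonal basis $\{K(\xi,\cdot):A(\xi)=0\}$ of $\mc{H}(E)$. For $F\in\mc{H}(E)$ with $zF\in\mc{H}(E)$ (otherwise the quotient is $+\infty$), applying \eqref{20210809_11:01} to both $F$ and $zF$ yields
\begin{equation*}
\|F\|_{\mc{H}(E)}^2=\sum_{A(\xi)=0}\dfrac{|F(\xi)|^2}{K(\xi,\xi)}, \qquad \|zF\|_{\mc{H}(E)}^2=\sum_{A(\xi)=0}\dfrac{\xi^2\,|F(\xi)|^2}{K(\xi,\xi)}.
\end{equation*}
Since $\xi^2\geq\xi_0^2$ at every zero $\xi$ of $A$, this immediately gives $\|zF\|_{\mc{H}(E)}^2\geq \xi_0^2\,\|F\|_{\mc{H}(E)}^2$.

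For the matching upper bound, \eqref{20210913_13:57} together with $A(\pm\xi_0)=0$ and $B$ odd gives $K(\pm\xi_0,z)=\mp B(\xi_0)A(z)/(\pi(z\mp\xi_0))$, where $B(\xi_0)\neq 0$ since $E(\xi_0)\neq 0$. A direct manipulation yields
\begin{equation*}
zK(\xi_0,z)=\xi_0 K(\xi_0,z)-\dfrac{B(\xi_0)}{\pi}A(z), \qquad zK(-\xi_0,z)=-\xi_0 K(-\xi_0,z)+\dfrac{B(\xi_0)}{\pi}A(z).
\end{equation*}
For the candidate $\frak{F}(z):=K(\xi_0,z)+K(\xi_0,-z)=K(\xi_0,z)+K(-\xi_0,z)$, the $A$-terms cancel, leaving $z\frak{F}(z)=\xi_0(K(\xi_0,z)-K(-\xi_0,z))\in\mc{H}(E)$. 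Using that $K(\xi_0,\cdot)$ and $K(-\xi_0,\cdot)$ are orthogonal basis vectors with the same norm $K(\xi_0,\xi_0)$, one finds $\|\frak{F}\|_{\mc{H}(E)}^2=2K(\xi_0,\xi_0)$ and $\|z\frak{F}\|_{\mc{H}(E)}^2=2\xi_0^2 K(\xi_0,\xi_0)$, so the ratio is exactly $\xi_0$.

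Finally, for uniqueness, equality in the sum inequality forces $F(\pm\xi_n)=0$ for every $n\geq 1$, which by the orthogonal expansion restricts $F$ to $\mathrm{span}\{K(\xi_0,\cdot),K(-\xi_0,\cdot)\}$. Writing $F=\alpha K(\xi_0,\cdot)+\beta K(-\xi_0,\cdot)$ and combining the two displayed identities for $zK(\pm\xi_0,\cdot)$ gives
\begin{equation*}
zF(z)=\alpha\xi_0\,K(\xi_0,z)-\beta\xi_0\,K(-\xi_0,z)+\dfrac{B(\xi_0)}{\pi}(\beta-\alpha)A(z),
\end{equation*}
and the admissibility constraint $zF\in\mc{H}(E)$, combined with $A\notin\mc{H}(E)$, forces $\alpha=\beta$, whence $F=\alpha\frak{F}$. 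The main obstacle I anticipate is precisely this last reconciliation: the equality condition in the weighted-sum inequality is a priori two-dimensional, and collapsing it to the one-dimensional family in the statement relies on the nontrivial constraint that $zF$ must live in $\mc{H}(E)$, which kills the antisymmetric combination by picking up a nonzero multiple of $A\notin\mc{H}(E)$.
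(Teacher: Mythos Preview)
Your argument is essentially the paper's own proof in the case $A \notin \mc{H}(E)$: the lower bound via the Plancherel-type identity \eqref{20210809_11:01}, the explicit check that $z\frak{F}\in\mc{H}(E)$ with ratio $\xi_0$, and the uniqueness via the admissibility constraint forcing $\alpha=\beta$ all match. The computations with \eqref{20210913_13:57} are correct.

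The gap is that you only treat what you call the ``generic case'' $A\notin\mc{H}(E)$, but the theorem is stated for arbitrary Hermite--Biehler $E$ satisfying the hypotheses, and the case $A\in\mc{H}(E)$ does occur. In that case the set $\Gamma_A=\{K(\xi,\cdot):A(\xi)=0\}$ is \emph{not} a basis of $\mc{H}(E)$; one only knows that $\Gamma_A\cup\{A\}$ is an orthogonal basis. Your norm identity for $\|F\|_{\mc{H}(E)}^2$ then fails for general $F$, and your uniqueness step (which uses $A\notin\mc{H}(E)$ to force the coefficient of $A$ in $zF$ to vanish) breaks down. The paper repairs this as follows: by \cite[Theorem~29]{Branges}, the orthogonal complement of $\mc{X}(E):=\{F\in\mc{H}(E):zF\in\mc{H}(E)\}$ is exactly ${\rm span}\{A\}$ (one checks $B\notin\mc{H}(E)$ since $E\notin\mc{H}(E)$). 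Hence any $F\in\mc{X}(E)$ is orthogonal to $A$, so the sampling formula for $\|F\|^2$ still holds, while for $\|zF\|^2$ one only has Bessel's inequality $\|zF\|^2\geq\sum_{A(\xi)=0}|\xi F(\xi)|^2/K(\xi,\xi)$. The lower bound then follows as before; for equality one needs both $F(\xi)=0$ for $\xi\neq\pm\xi_0$ \emph{and} $zF\perp A$, the latter replacing your condition $A\notin\mc{H}(E)$ and again forcing $F(\xi_0)=F(-\xi_0)$ via your displayed formula for $zF$.
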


\begin{proof} Letting $B = \frac{i}{2} \big(E - E^*\big)$, the fact that $z \mapsto E(iz)$ is real entire is equivalent to the statement that $A$ is even and $B$ is odd. Note that $A(0) \neq 0$, otherwise we would have $K(0,0) =0$ by \eqref{Intro_Def_K}, which in turn would imply that $E(0) = 0$, a contradiction. In what follows we denote $\mc{X}(E) := \{F \in \mc{H}(E)\, : \,  zF \in \mc{H}(E)\}$. One can check directly from \eqref{20210913_13:57} that the proposed extremizers are non-zero functions in the subspace $\mc{X}(E)$.

\smallskip

We start with the most typical case $A \notin \mc{H}(E)$. Let ${\bf 0} \neq F \in \mc{X}(E)$. From \eqref{20210809_11:01} we get
\begin{align}\label{20210909_23:48}
\|F\|_{\mc{H}(E)}^2  = \sum_{A(\xi) = 0}  \frac{\big| F(\xi)\big|^2 }{K(\xi, \xi)} \leq \frac{1}{\xi_0^2} \sum_{A(\xi) = 0}  \frac{|\xi|^2\,\big|F(\xi)\big|^2 }{K(\xi, \xi)} = \frac{1}{\xi_0^2}\, \|z F\|_{\mc{H}(E)}^2. 
\end{align}
This plainly shows that ${\bf A}_{E} \geq \xi_0$. In order to have equality in \eqref{20210909_23:48} one must have $F(\xi) = 0$ for each $\xi \in \R$ such that $A(\xi) = 0$ and $\xi \neq \pm \xi_0$. By the interpolation formula in \eqref{20210809_11:01} (recall that $K(\xi_0, \xi_0) = K(-\xi_0, -\xi_0)$ in our setup) we get
\begin{align}\label{20210913_17:12}
F(z) & = \frac{1}{K(\xi_0, \xi_0)} \big( F(\xi_0)\, K(\xi_0, z) +  F(-\xi_0) \, K(-\xi_0, z)\big).
\end{align}
From \eqref{20210913_17:12} and \eqref{20210913_13:57} note that 
\begin{equation}\label{20210913_17:19}
zF(z) = \frac{A(z) B(\xi_0)}{\pi K(\xi_0, \xi_0)} \big( F(-\xi_0) -F(\xi_0)\big) + \frac{\xi_0 \, F(\xi_0)}{K(\xi_0, \xi_0)} K(\xi_0, z) + \frac{(-\xi_0) \, F(-\xi_0)}{K(\xi_0, \xi_0)} K(-\xi_0, z).
\end{equation}
One then sees that $zF \in \mc{H}(E)$ if and only if $F(\xi_0) = F(-\xi_0)$ (recall that $B(\xi_0) \neq 0$ since $E$ has no real zeros). Hence ${\bf A}_{E} = \xi_0$ and the extremizers have the proposed form.

\smallskip

We now consider the case $A \in \mc{H}(E)$. By \cite[Theorem 29]{Branges} a function $G \in \mc{H}(E)$ is  orthogonal to $\mc{X}(E)$ if and only if it is of the form $G(z) = uA(z) + vB(z)$ for constants $u,v \in \C$. If there exists such a function $G$ with $v \neq 0$, we find that $B \in  \mc{H}(E)$ and hence $E \in \mc{H}(E)$, contradicting \eqref{20210913_14:31}. Hence $\mc{X}(E)^{\perp} = {\rm span}\{A\}$. Recalling the discussion in \S \ref{DeBranges_Intro}, that $\Gamma_A \cup \{A\}$ is an orthogonal basis of $\mc{H}(E)$, if ${\bf 0} \neq F \in \mc{X}(E)$ we have
\begin{align}\label{20210913_16:40}
\|F\|_{\mc{H}(E)}^2  = \sum_{A(\xi) = 0}  \frac{\big| F(\xi)\big|^2 }{K(\xi, \xi)} \leq \frac{1}{\xi_0^2} \sum_{A(\xi) = 0}  \frac{|\xi|^2\,\big|F(\xi)\big|^2 }{K(\xi, \xi)} \leq \frac{1}{\xi_0^2}\, \|z F\|_{\mc{H}(E)}^2. 
\end{align}
This shows that ${\bf A}_{E} \geq \xi_0$. In order to have equality in \eqref{20210913_16:40} one must have $F(\xi) = 0$ for each $\xi \in \R$ such that $A(\xi) = 0$ and $\xi \neq \pm \xi_0$, and $zF \perp A$. By the interpolation conditions, $F$ has a representation as in \eqref{20210913_17:12}. By \eqref{20210913_17:19} one sees that $zF \perp A$ if and only if $F(\xi_0) = F(-\xi_0)$. This leads us to the same extremizers as before and to the conclusion that ${\bf A}_{E} = \xi_0$.
\end{proof}

\subsection{Proof of Theorem \ref{Thm_Db_9}} The solution of our original extremal problem \eqref{20210907_13:49} is then a corollary of Theorem \ref{DB_Thm}, applied to the Hermite-Biehler function $E = E_{G^{\sharp}, \pi\Delta}$ given by \eqref{20210909_19:00}. One has 
$${\bf A}_{G, \pi\Delta} = \xi_0,$$
where $\xi_0$ is the smallest positive real zero of the even function $A(x) := {\rm Re} \,(E_{G^{\sharp}, \pi\Delta}(x))$. Inequality \eqref{20210914_13:55} plainly leads us to Theorem \ref{Thm_Db_9}.

 \section{Appendix: Sharp embeddings} 
\subsection{Sharp constants} In this appendix, we are interested in determining the values of the sharp constants
\begin{equation*}
{\bf C}^-_{G, \pi\Delta} := \inf_{\substack{F \in \mc{H}_{\pi\Delta} \\ F\neq {\bf 0}}}  \frac{\|F\|_{L^2(\R, W_G)}}{\|F\|_{L^2(\R)}} \ \ \ {\rm and} \ \ \ {\bf C}^+_{G, \pi\Delta} := \sup_{\substack{F \in \mc{H}_{\pi\Delta} \\ F\neq {\bf 0}}}  \frac{\|F\|_{L^2(\R, W_G)}}{\|F\|_{L^2(\R)}},
\end{equation*}
associated to inequality \eqref{20210505_11:03}, and investigating the extremizing functions; see \cite[Appendix B]{CCCM} for a related problem involving Hilbert spaces of entire functions associated to the pair correlation of zeros of the Riemann zeta-function. Our study considers the cases of Theorems \ref{Thm3_PW} -- \ref{Thm_odd_orthogonal}, that is, when $G \in \{{\rm U, O}\}$ and $\Delta >0$ and when $G \in \{{\rm Sp, SO}({\rm even}), {\rm SO}(\rm{odd})\}$ and $0 < \Delta \leq 2$. The methods below could be implemented to treat the latter cases in the regime $\Delta >2$, at a higher computational cost. When $G = {\rm U}$ we have $\mathcal{H}_{{\rm U}, \pi \Delta}$ being the Paley-Wiener space $\mathcal{H}_{\pi \Delta}$ and there is nothing to do. We have then 8 sharp constants to determine, and these are described in the results below.

\begin{theorem}[Sharp constants:~orthogonal symmetry] \label{Thm12_O} For any $\Delta >0$ we have
\begin{equation*}
{\bf C}^-_{{\rm O}, \pi\Delta} = 1 \ \ \ {\rm and} \ \ \ {\bf C}^+_{{\rm O}, \pi\Delta} = \sqrt{1 + \tfrac{\Delta}{2}}\,.
\end{equation*}
\end{theorem}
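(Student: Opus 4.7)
The plan is to exploit the particularly simple structure of the orthogonal density $W_{\mathrm{O}}(x) = 1 + \tfrac{1}{2}\boldsymbol{\delta}_0(x)$, which immediately yields the identity
\begin{equation*}
\|F\|^2_{L^2(\R, W_{\mathrm{O}})} = \|F\|^2_{L^2(\R)} + \tfrac{1}{2}\,|F(0)|^2
\end{equation*}
for every $F \in \mc{H}_{\pi\Delta}$. Thus the whole problem reduces to understanding the ratio $|F(0)|^2/\|F\|^2_{L^2(\R)}$ over $\mc{H}_{\pi\Delta}$, and both sharp constants should fall out by elementary means. I expect no serious obstacle here; the work is essentially bookkeeping together with a single application of Cauchy--Schwarz in the reproducing kernel Hilbert space $\mc{H}_{\pi\Delta}$.

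For the lower bound $\mathbf{C}^-_{\mathrm{O},\pi\Delta}=1$, I would simply observe that the identity above gives the trivial inequality $\|F\|^2_{L^2(\R,W_{\mathrm{O}})} \geq \|F\|^2_{L^2(\R)}$, so the ratio is always at least $1$; equality holds precisely when $F(0)=0$. To see this is attained, I would exhibit a concrete nonzero $F \in \mc{H}_{\pi\Delta}$ vanishing at the origin, for instance the shifted sinc $F(z) = \frac{\sin(\pi\Delta(z-1/\Delta))}{\pi(z-1/\Delta)}$, which has $\Delta \cdot F(0)= \sin(-\pi)/(-\pi)= 0$. Hence the infimum equals $1$ and is attained by every $F \in \mc{H}_{\pi\Delta}$ with $F(0)=0$.

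For the upper bound $\mathbf{C}^+_{\mathrm{O},\pi\Delta} = \sqrt{1+\tfrac{\Delta}{2}}$, I would use Theorem \ref{Thm3_PW}, which in particular gives the special value $K_{{\rm U},\pi\Delta}(0,0) = \Delta$ (the reproducing kernel of the Paley--Wiener space at the origin). Applying the reproducing formula and Cauchy--Schwarz as in \eqref{20210512_12:33},
\begin{equation*}
|F(0)|^2 = \big|\langle F, K_{{\rm U},\pi\Delta}(0,\cdot)\rangle_{\mc{H}_{\pi\Delta}}\big|^2 \leq K_{{\rm U},\pi\Delta}(0,0)\, \|F\|^2_{L^2(\R)} = \Delta\, \|F\|^2_{L^2(\R)},
\end{equation*}
with equality if and only if $F$ is a scalar multiple of $K_{{\rm U},\pi\Delta}(0,\cdot)(z) = \frac{\sin\pi\Delta z}{\pi z}$. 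Combining this with the identity from the first paragraph gives
\begin{equation*}
\frac{\|F\|^2_{L^2(\R, W_{\mathrm{O}})}}{\|F\|^2_{L^2(\R)}} = 1 + \frac{|F(0)|^2}{2\|F\|^2_{L^2(\R)}} \leq 1 + \frac{\Delta}{2},
\end{equation*}
so the supremum equals $\sqrt{1+\tfrac{\Delta}{2}}$ and is attained (uniquely up to scalar) by $F(z) = \frac{\sin\pi\Delta z}{\pi z}$. This completes the proposed proof.
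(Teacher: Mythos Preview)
Your proof is correct and follows essentially the same approach as the paper: both rely on the identity $\|F\|^2_{L^2(\R,W_{\rm O})}=\|F\|^2_{L^2(\R)}+\tfrac12|F(0)|^2$ and then bound $|F(0)|^2\le\Delta\|F\|^2_{L^2(\R)}$ with equality characterizing the extremizer as (a multiple of) $\sin(\pi\Delta z)/(\pi z)$. The only cosmetic difference is that the paper derives the latter inequality via the sampling identity $\|F\|^2_{L^2(\R)}=\Delta^{-1}\sum_{k\in\Z}|F(k/\Delta)|^2$, whereas you invoke the reproducing kernel Cauchy--Schwarz inequality; both routes are equivalent and already appear elsewhere in the paper.
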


\begin{theorem}[Sharp constants:~even orthogonal symmetry] \label{Thm13_SOeven}\hfill

\smallskip

\noindent {\rm (i)} For $0 < \Delta \leq 1$ we have
\begin{equation*}
{\bf C}^-_{{\rm SO(even)}, \pi\Delta} = 1 \ \ \ {\rm and} \ \ \ {\bf C}^+_{{\rm SO(even)}, \pi\Delta} = \sqrt{1 + \tfrac{\Delta}{2}}\,.
\end{equation*}

\noindent {\rm (ii)} For $1 < \Delta \leq 2$, let $\eta^+$ and $\eta^-$ be the largest and the smallest real solutions of 
\begin{align*}
\left(\tfrac{1}{2}+\tfrac{2-\Delta}{4\eta}\right)\cos\left(\tfrac{\Delta-1}{2\eta}\right)+\sin\left(\tfrac{\Delta-1}{2\eta}\right) = 1.
\end{align*}
Then 
\begin{equation*}
{\bf C}^-_{{\rm SO(even)}, \pi\Delta} = \sqrt{1 + \eta^-} \ \ \ {\rm and} \ \ \ {\bf C}^+_{{\rm SO(even)}, \pi\Delta} = \sqrt{1 + \eta^+}\,.
\end{equation*}
\end{theorem}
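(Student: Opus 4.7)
The plan is to pass to the Fourier side via Plancherel and recast the problem as a spectral one for a compact self-adjoint operator. By Proposition \ref{Lem_equiv_norms} the spaces $\mc{H}_{{\rm SO(even)},\pi\Delta}$ and $\mc{H}_{\pi\Delta}$ coincide as sets, so it suffices to work with any $F\in\mc{H}_{\pi\Delta}$ and its Fourier transform $f=\widehat F\in L^2([-\Delta/2,\Delta/2])$. Plancherel gives $\|F\|^2_{L^2(\R)} = \|f\|^2_{L^2}$, and the multiplication formula applied to $W_{\rm SO(even)}(x)-1=\tfrac{\sin 2\pi x}{2\pi x}$, whose Fourier transform is $\tfrac12{\bf 1}_{[-1,1]}$, yields
\[
\|F\|^2_{L^2(\R, W_{\rm SO(even)})} \;=\; \langle (I+P)f,f\rangle_{L^2},
\]
where $P$ is the self-adjoint Hilbert--Schmidt operator on $L^2([-\Delta/2,\Delta/2])$ defined by
\[
(Pf)(s) \;=\; \tfrac12\int_{\max(-\Delta/2,\,s-1)}^{\min(\Delta/2,\,s+1)} f(y)\,\dy.
\]
Compactness of $P$ implies that its spectrum is discrete (accumulating only at $0$), so ${\bf C}^{\pm}_{{\rm SO(even)},\pi\Delta} = \sqrt{1+\lambda^{\pm}}$ with $\lambda^{\pm}$ the extreme eigenvalues of $P$.

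For part \textup{(i)} with $0<\Delta\le 1$, the truncation bounds in $P$ are inactive and $(Pf)(s)=\tfrac12\int_{-\Delta/2}^{\Delta/2}f$; hence $P=\tfrac12\langle\cdot,{\bf 1}\rangle{\bf 1}$ is rank-one with spectrum $\{0,\Delta/2\}$, giving $\lambda^{-}=0$, $\lambda^{+}=\Delta/2$, and the stated values.

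For part \textup{(ii)} with $1<\Delta\le 2$, I would solve $Pf=\eta f$ by the descending/ascending method already used in \S\ref{Subs_Delta=2} to construct the reproducing kernel. Differentiating the integral equation forces $f$ to obey the ODE $f''+f/(4\eta^2)=0$ on each of the outer intervals $[-\Delta/2,\Delta/2-1]$ and $[1-\Delta/2,\Delta/2]$ (producing trigonometric ans\"atze with parameters $(\alpha,\beta)$) and to be constant $C$ on the middle interval $[\Delta/2-1, 1-\Delta/2]$. Since $P$ commutes with the reflection $f(\cdot)\mapsto f(-\cdot)$, I would diagonalize on the even and odd sectors separately. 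In each sector the first-derivative cross-relation $2\eta f'_R(s) = -f_L(s-1)$ on the right interval, the continuity of $f$ at $s=1-\Delta/2$, and a single evaluation of the original integral equation at that same point produce a small homogeneous linear system in the free parameters, and its consistency determinant gives the sector's eigenvalue equation. A direct trigonometric simplification (using $\sin^2+\cos^2=1$ to cancel a common factor) shows that the union of the nontrivial eigenvalues coming from the two sectors coincides with the set of real solutions of the single equation
\[
\Bigl(\tfrac12+\tfrac{2-\Delta}{4\eta}\Bigr)\cos\!\Bigl(\tfrac{\Delta-1}{2\eta}\Bigr)+\sin\!\Bigl(\tfrac{\Delta-1}{2\eta}\Bigr)=1
\]
displayed in the statement: the generic (even) branch comes from dividing an unrationalized form by the denominator $\cos((\Delta-1)/(2\eta))$, while the genuine odd eigenvalues correspond exactly to the zeros of that denominator, at which the displayed equation is trivially satisfied.

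The main obstacle will be the careful bookkeeping of spurious roots. The derivation produces auxiliary denominators that can vanish at exceptional values of $\eta$ (for instance $1\pm\sin(1/(2\eta))=0$ or $\cos(1/(2\eta))=0$), and at each such exceptional point the parameterization of the one-dimensional kernel of the first-derivative relation changes form. I would check by direct substitution that, except for the odd-sector zeros described above, these exceptional candidates always fail either the continuity condition or the integral equation at a second test point, and hence do not contribute genuine eigenvalues. Once the bijective correspondence between the real solutions of the displayed equation and the nontrivial eigenvalues of $P$ is established, compactness forces $\lambda^{\pm} = \eta^{\pm}$, yielding the claimed formulas (a quick sign analysis shows $\eta^{+}>0>\eta^{-}$ in the relevant range of $\Delta$, so the $0$ always present in the spectrum of $P$ is not responsible for either extreme).
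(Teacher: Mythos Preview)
Your approach is essentially the paper's: pass to the Fourier side, reduce to the Rayleigh quotient of the compact self-adjoint convolution operator, and find the extremal eigenvalues by differentiating the integral equation into piecewise trigonometric solutions. Part (i) via the rank-one observation is clean and equivalent to what the paper does through the reduction to the orthogonal case.

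For part (ii), however, the even/odd decomposition is a detour that the paper avoids. The paper keeps the full ansatz $u_R = A e^{iy/2\eta} + B e^{-iy/2\eta}$, $u_M = D$, $u_L = A_1 e^{iy/2\eta} + B_1 e^{-iy/2\eta}$, uses the first-derivative cross-relation to eliminate $A_1,B_1$, and then imposes three conditions (evaluation of the integral equation at $y=\pm\Delta/2$ and continuity at $y=\Delta/2-1$) to obtain a $3\times 3$ homogeneous system in $(A,B,D)$; after eliminating $D$ this becomes $2\times 2$, and the vanishing of its determinant $a\overline{b}-\overline{a}b=0$ simplifies directly to the displayed equation. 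No sector-by-sector recombination is needed, and no spurious-denominator case analysis arises.

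Two points in your write-up need more care. First, your claim that ``the genuine odd eigenvalues correspond exactly to the zeros of $\cos((\Delta-1)/(2\eta))$'' is imprecise: only the half of those zeros with $\sin((\Delta-1)/(2\eta))=+1$ actually yield odd eigenfunctions (and only those make the displayed equation hold). Second, the assertion that ``a quick sign analysis shows $\eta^{+}>0>\eta^{-}$'' is exactly the content of the paper's separate ``Non-trivial signs'' step, which exhibits explicit test functions (translates of $\sin(\pi\Delta x)/(\pi\Delta x)$) to force ${\bf L}^+>0>{\bf L}^-$; this is not immediate from the transcendental equation alone and should be argued.
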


\begin{theorem}[Sharp constants:~symplectic symmetry] \label{Thm14_Sp}\hfill

\smallskip

\noindent {\rm (i)} For $0 < \Delta \leq 1$ we have
\begin{equation*}
{\bf C}^-_{{\rm Sp}, \pi\Delta} = \sqrt{1 - \tfrac{\Delta}{2}} \ \ \ {\rm and} \ \ \ {\bf C}^+_{{\rm Sp}, \pi\Delta} = 1.
\end{equation*}

\noindent {\rm (ii)} For $1 < \Delta \leq 2$, let $\eta^+$ and $\eta^-$ be the largest and the smallest real solutions of 
\begin{align*}
\left(\tfrac{1}{2}+\tfrac{2-\Delta}{4\eta}\right)\cos\left(\tfrac{\Delta-1}{2\eta}\right)+\sin\left(\tfrac{\Delta-1}{2\eta}\right) = 1.
\end{align*}
Then 
\begin{equation*}
{\bf C}^-_{{\rm Sp}, \pi\Delta} = \sqrt{1 - \eta^+} \ \ \ {\rm and} \ \ \ {\bf C}^+_{{\rm Sp}, \pi\Delta} = \sqrt{1 - \eta^-}\,.
\end{equation*}
\end{theorem}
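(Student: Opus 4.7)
The plan is to reduce the statement to an eigenvalue analysis for a Fredholm-type integral operator. Since $W_{\rm Sp}(x) = 1 - \frac{\sin 2\pi x}{2\pi x}$, for $F \in \mc{H}_{\pi\Delta}$ with Fourier transform $g = \widehat{F}$ supported in $I := [-\tfrac{\Delta}{2}, \tfrac{\Delta}{2}]$, Plancherel's identity together with $\frac{\sin 2\pi x}{2\pi x} = \tfrac{1}{2}\,\widehat{\mathbf{1}_{[-1,1]}}(x)$ yields
\begin{equation*}
\|F\|_{L^2(\R, W_{\rm Sp})}^2 \;=\; \|F\|_{L^2(\R)}^2 \;-\; \langle Tg, g\rangle_{L^2(I)},
\end{equation*}
where $T$ is the compact self-adjoint operator on $L^2(I)$ with kernel $\tfrac{1}{2}\mathbf{1}_{[-1,1]}(y-z)$. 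By the Rayleigh--Ritz principle, $({\bf C}_{\rm Sp,\pi\Delta}^-)^2 = 1 - \eta^+$ and $({\bf C}_{\rm Sp,\pi\Delta}^+)^2 = 1 - \eta^-$, where $\eta^+$ and $\eta^-$ denote the largest and smallest eigenvalues of $T$.

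For part (i), when $0 < \Delta \leq 1$ the set $I$ has diameter at most $1$, so the kernel equals $\tfrac{1}{2}$ identically on $I \times I$ and $T$ reduces to the rank-one operator $g \mapsto \tfrac{1}{2}\big(\int_I g\big)\mathbf{1}_I$, whose spectrum is $\{\Delta/2,\,0\}$. This immediately gives the stated formulas, with extremizer for $\eta^+$ the constant $g \equiv 1$ (equivalently $F(x) \propto \sin(\pi\Delta x)/(\pi x)$) and any $F$ with $F(0)=0$ attaining $\eta^-=0$.

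For part (ii), we solve $Tg = \eta g$ by adapting the descending/ascending-step strategy of \textsection\ref{Sec_Rep_Kernels}. On the three subintervals $I_1 := (-\tfrac{\Delta}{2}, \tfrac{\Delta}{2}-1)$, $I_2 := (\tfrac{\Delta}{2}-1, 1-\tfrac{\Delta}{2})$, and $I_3 := (1-\tfrac{\Delta}{2}, \tfrac{\Delta}{2})$, differentiating the eigenvalue integral equation yields the coupling $g(y+1) = 2\eta g'(y)$ on $I_1$ and its reflection on $I_3$; iterating gives the ODE $g'' + g/(4\eta^2) = 0$ on $I_1 \cup I_3$, together with $g \equiv \mathrm{const}$ on $I_2$. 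Since $T$ commutes with the involution $y \mapsto -y$, we decompose into even and odd eigenfunctions. For each parity, the compatibility of the ODE solutions across $I_1$ and $I_3$, continuity at the junctions $\pm(\tfrac{\Delta}{2}-1)$, and the boundary integral equation at $y = -\tfrac{\Delta}{2}$ together yield, via vanishing of a nontriviality determinant, a transcendental characteristic equation in $\eta$.

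The main algebraic task is then to identify the union of these two characteristic equations with the single target equation. Setting $\theta := (\Delta-1)/(2\eta)$ and $\psi := (2-\Delta)/(4\eta)$, squaring the target $(\tfrac{1}{2}+\psi)\cos\theta + \sin\theta = 1$ (after isolating the $\cos\theta$ term) and factoring gives
\begin{equation*}
(1-\sin\theta)\Big[(1-\sin\theta) - \big(\tfrac{1}{2}+\psi\big)^2(1+\sin\theta)\Big] = 0.
\end{equation*}
The first factor $\sin\theta = 1$ recovers the odd characteristic equation, while the second factor simplifies, via the Weierstrass substitution $t = \tan(\theta/2)$ and the non-spurious branch $|t|<1$ (forced by the sign condition $\cos\theta \ge 0$), to $\tan(\theta/2) = (1-2\psi)/(3+2\psi)$; routine sum-to-product manipulations identify this with the even characteristic equation. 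The hard part is precisely this algebraic bookkeeping. Since $T$ is compact with spectrum bounded and accumulating only at $0$, its largest and smallest eigenvalues are attained at the largest and smallest real roots of the target equation, giving $\eta^{\pm}$ and completing the proof.
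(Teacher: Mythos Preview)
Your reduction to the Rayleigh quotient of the compact self-adjoint operator $T$ with kernel $\tfrac12\mathbf{1}_{[-1,1]}(y-z)$ on $L^2(I)$, and the ODE analysis on the three subintervals, is exactly the paper's framework (its \S\ref{Extr_eigen}--\S\ref{Sub5.5}). For part (i) your rank-one observation is equivalent to the paper's computation. For part (ii) the paper proceeds differently: it does \emph{not} split by parity. It writes the general solution on $I_1, I_2, I_3$ with constants $(A,B,D)$ (the coupling having already fixed $A_1,B_1$), imposes three linear conditions (the integral equation at $y=\pm\tfrac{\Delta}{2}$ and continuity at $y=\tfrac{\Delta}{2}-1$), eliminates $D$, and obtains the target equation directly as the $2\times 2$ determinant condition $a\overline{b}-\overline{a}b=0$. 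No squaring, no parity bookkeeping.

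Your parity route is plausible but the final matching step is where the argument breaks down. You never display the even and odd characteristic equations; you only square the target, factor, and \emph{assert} that the factors coincide with the (undisplayed) parity equations. The claim that the odd spectrum is precisely $\sin\theta=1$ is not proved, and the ``non-spurious branch $|t|<1$ forced by $\cos\theta\ge 0$'' is not valid uniformly: for $\eta<0$ one can have $\tfrac12+\psi<0$, so the sign of $\cos\theta$ in the target equation is not fixed. What your squaring shows is that the target solution set equals $\{\sin\theta=1\}\cup\{\text{non-spurious second factor}\}$; what it does \emph{not} show is that these two pieces are the odd and even spectra respectively. Without that identification you have not connected the eigenvalues of $T$ to the roots of the target equation, and the conclusion that $\eta^{\pm}$ are the extremal roots is unjustified. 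A separate point: one must also know $T$ has negative eigenvalues for $1<\Delta\le 2$ (else $\eta^-=0$, not a root); the paper checks this by an explicit test function, and in your approach it would follow once the eigenvalue--root correspondence is properly established.
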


\begin{theorem}[Sharp constants:~odd orthogonal symmetry] \label{Thm15_SOodd}\hfill

\smallskip

\noindent {\rm (i)} For $0 < \Delta \leq 1$ we have
\begin{equation*}
{\bf C}^-_{{\rm SO(odd)}, \pi\Delta} = 1 \ \ \ {\rm and} \ \ \ {\bf C}^+_{{\rm SO(odd)}, \pi\Delta} = \sqrt{1 + \tfrac{\Delta}{2}}\,.
\end{equation*}

\noindent {\rm (ii)} For $1 < \Delta \leq 2$, let $\eta^+$ and $\eta^-$ be the largest and the smallest real solutions of 
\begin{align*}
\left(\tfrac{3}{2}-\tfrac{2-\Delta}{4\eta}\right)\cos\left(\tfrac{\Delta-1}{2\eta}\right)-\sin\left(\tfrac{\Delta-1}{2\eta}\right) = 1.
\end{align*}
Then 
\begin{equation*}
{\bf C}^-_{{\rm SO(odd)}, \pi\Delta} = \sqrt{1 + \eta^-} \ \ \ {\rm and} \ \ \ {\bf C}^+_{{\rm SO(odd)}, \pi\Delta} = \sqrt{1 + \eta^+}\,.
\end{equation*}
\end{theorem}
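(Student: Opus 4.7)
The plan is to handle the two regimes separately.

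For part (i), when $0 < \Delta \leq 1$ and $F \in \mathcal{H}_{\pi\Delta}$, the distributional Fourier transform of $|F|^2$ is supported in $[-\Delta, \Delta] \subset [-1,1]$. On this interval, \eqref{20210506_11:40_1} gives $\widehat{W}_{\rm SO(odd)}(y) = \boldsymbol{\delta}_0(y) + \tfrac{1}{2} = \widehat{W}_{\rm O}(y)$, so Plancherel yields $\|F\|_{L^2(\R, W_{\rm SO(odd)})} = \|F\|_{L^2(\R, W_{\rm O})}$ for every $F \in \mathcal{H}_{\pi\Delta}$. The stated values follow immediately from Theorem \ref{Thm12_O}.

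For part (ii), I will write $\|F\|^2_{L^2(\R, W_{\rm SO(odd)})} = \|F\|^2_{L^2(\R)} + \langle TF, F\rangle_{L^2(\R)}$ with $T$ the compact self-adjoint operator on $\mathcal{H}_{\pi\Delta}$ whose quadratic form is $\langle TF, F\rangle = |F(0)|^2 - \int_\R \frac{\sin 2\pi x}{2\pi x}\,|F(x)|^2 \,\dx$ (a rank-one plus Hilbert--Schmidt perturbation). Then $({\bf C}^{\pm}_{{\rm SO(odd)},\pi\Delta})^2 = 1 + \eta^\pm$, where $\eta^\pm$ are the extreme eigenvalues of $T$. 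Since $T$ commutes with the reflection $F \mapsto F(-\cdot)$, eigenfunctions can be taken of definite parity. Passing to the Fourier side with $g = \widehat{F} \in L^2([-\tfrac{\Delta}{2},\tfrac{\Delta}{2}])$ and $c := F(0) = \int_{-\Delta/2}^{\Delta/2} g$, the eigenvalue equation $TF = \eta F$ transforms into
\begin{equation*}
\eta\, g(u) \,+\, \tfrac{1}{2}\! \int_{(u-1)\vee(-\Delta/2)}^{(u+1)\wedge(\Delta/2)}\!\! g(s)\,\ds \,=\, c, \qquad u \in [-\tfrac{\Delta}{2},\tfrac{\Delta}{2}].
\end{equation*}

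For $1 < \Delta \leq 2$, I split $[-\tfrac{\Delta}{2},\tfrac{\Delta}{2}]$ into the three regions $I = [-\tfrac{\Delta}{2}, \tfrac{\Delta}{2}-1]$, $II = [\tfrac{\Delta}{2}-1, 1-\tfrac{\Delta}{2}]$, $III = [1-\tfrac{\Delta}{2}, \tfrac{\Delta}{2}]$, following \S \ref{Subs_Delta=2}. On $II$ the integral equals $c$, forcing $g \equiv c/(2\eta)$ there. Differentiating the equation twice in $I$ and $III$ yields the ODE $g'' + g/(4\eta^2) = 0$, with trigonometric solutions of frequency $\phi := 1/(2\eta)$. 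Seeking even eigenfunctions, I write $g(u) = A\cos(\phi u) + B\sin(\phi u)$ on $III$ and extend by evenness to $I$. The once-differentiated equation $\eta g'(u) + g(u+1)/2 = 0$ on $I$ forces $B = A(1+\sin\phi)/\cos\phi$; continuity at $u = 1-\tfrac{\Delta}{2}$ yields $A\bigl(\cos(\phi\tfrac{\Delta}{2}) + \sin(\phi(1-\tfrac{\Delta}{2}))\bigr) = c\,\phi\cos\phi$; and the compatibility $c = \int_{-\Delta/2}^{\Delta/2} g$ supplies a third relation. Eliminating $A$, $B$ and $c$, then simplifying by means of $\cos(\phi\tfrac{\Delta}{2})\sin\phi - \cos\phi\sin(\phi\tfrac{\Delta}{2}) = \sin(\phi(1-\tfrac{\Delta}{2}))$ and the sum-to-product formulas applied to $\alpha := \phi\tfrac{\Delta}{2}$ and $\beta := \phi(1-\tfrac{\Delta}{2})$ (noting $\alpha-\beta = \phi(\Delta-1)$), the resulting ratio becomes $(\sin\alpha + \cos\beta)/(\cos\alpha + \sin\beta) = \tan(\pi/4 + \phi(\Delta-1)/2) = (1+\sin(\phi(\Delta-1)))/\cos(\phi(\Delta-1))$, collapsing the system to
\begin{equation*}
\left(\tfrac{3}{2} - \tfrac{2-\Delta}{4\eta}\right)\cos\!\left(\tfrac{\Delta-1}{2\eta}\right) \,-\, \sin\!\left(\tfrac{\Delta-1}{2\eta}\right) = 1.
\end{equation*}

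The main obstacle will be this final trigonometric collapse, together with the identification of $\eta^+$ and $\eta^-$ as the largest and smallest real roots of this equation. For the latter, compactness of $T$ gives a discrete spectrum whose extremes are attained; one must separately check that the odd sector, in which $c = 0$ and the eigenvalue equation reduces to the homogeneous Sp-type equation governed by Theorem \ref{Thm14_Sp}, contributes only interior eigenvalues. This should follow from a direct min-max argument that exploits the positive rank-one perturbation $F \mapsto F(0)\,K_{\pi\Delta}(0,\cdot)$ available in the even subspace but absent in the odd one, so that the extremes are realized on the even side and captured by the displayed transcendental equation.
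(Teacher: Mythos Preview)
Your part (i) is correct and follows the paper exactly: both reduce to Theorem \ref{Thm12_O} via $\widehat{W}_{\rm SO(odd)} = \widehat{W}_{\rm O}$ on $[-1,1]$.

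For part (ii), your overall strategy --- compact self-adjoint operator, extreme eigenvalues, Fourier-side integral equation, piecewise ODE $g'' + g/(4\eta^2)=0$ --- matches the paper's. The substantive difference is that you restrict \emph{a priori} to even eigenfunctions and then propose a separate argument that odd eigenvalues are interior. The paper avoids this entirely: it parametrizes the general solution on the three subintervals by complex exponentials with three free constants $(A,B,D)$ (the relations $A_1 = i e^{i/2\eta}A$, $B_1 = -ie^{-i/2\eta}B$ for the left piece come from the once-differentiated equation), and then imposes three linear conditions --- the integral equation at $y = \pm\tfrac{\Delta}{2}$ and continuity at one junction. The resulting $3\times 3$ homogeneous system has a nontrivial solution iff its determinant vanishes; after substituting $D = \tau A + \bar\tau B$ this becomes the $2\times 2$ condition $a\bar b - \bar a b = 0$, which is exactly the stated transcendental equation. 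Because no parity assumption was made, this single determinant condition characterizes \emph{all} eigenvalues at once, so $\eta^\pm$ are simply its largest and smallest real roots, and the parity question never arises.

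Your proposed min-max argument for the odd sector has a genuine gap at the minimum. Writing $T_{\rm SO(odd)} = T_{\rm Sp} + P$ with $P \geq 0$ the rank-one map $F \mapsto F(0)\,K_{\pi\Delta}(0,\cdot)$ supported on even functions, Weyl's inequality gives $\lambda_{\min}(T_{\rm SO(odd)}|_{\rm even}) \geq \lambda_{\min}(T_{\rm Sp}|_{\rm even})$, which goes the \emph{wrong} way: adding a positive perturbation pushes the bottom of the even spectrum up, so there is no immediate reason the global minimum of $T_{\rm SO(odd)}$ should lie in the even sector rather than the odd one (where $T_{\rm SO(odd)}|_{\rm odd} = T_{\rm Sp}|_{\rm odd}$). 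Even for the maximum, your sketch requires knowing where $\lambda_{\max}(T_{\rm Sp})$ sits by parity, which you have not established. The paper's unrestricted $(A,B,D)$-parametrization sidesteps all of this; I recommend you adopt it rather than pursue the parity separation.
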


\subsection{Interpolation formulas and the proofs of Theorem \ref{Thm12_O} and parts (i) of Theorems \ref{Thm13_SOeven} -- \ref{Thm15_SOodd}}  If  $F\in \mathcal{H}_{\pi\Delta}$, from the Paley-Wiener theorem and Plancherel's identity, it is a well-known fact that the norm $\|F\|_{L^2(\R)}$ can be inferred from $(1/\Delta)$-equally spaced samples as 
\begin{equation}\label{20210517_11:25}
\|F\|_{L^2(\R)}^2 = \frac{1}{\Delta}\sum_{k \in \Z} \big|F\big(\tfrac{k}{\Delta}\big)\big|^2.
\end{equation}
Moreover, the function $F$ can also be fully recovered from such samples, a classical result known as the Shannon-Whittaker interpolation formula,
 \begin{equation}\label{20210517_11:31}
F(z)= \sum_{k \in \Z} F\big(\tfrac{k}{\Delta}\big)  \dfrac{\sin\big( \pi \Delta\big(z - \tfrac{k}{\Delta}\big)\big) }{\pi \Delta\big(z- \tfrac{k}{\Delta}\big)}.
\end{equation}

When $F\in \mathcal{H}_{\pi\Delta}$ we have $\|F\|^2_{L^2(\R)} \leq \|F\|_{L^2(\R, W_{\rm O})}^2 = \|F\|^2_{L^2(\R)} + |F(0)|^2/2$, which plainly implies that ${\bf C}^-_{{\rm O}, \pi\Delta} = 1$ and that $F$ is an extremizer if and only if $F(0)=0$. On the other hand, from \eqref{20210517_11:25} we have
\begin{equation}\label{20210517_12:04}
\|F\|_{L^2(\R, W_{\rm O})}^2 = \|F\|^2_{L^2(\R)} + \frac{|F(0)|^2}{2} \leq \left( 1 + \tfrac{\Delta}{2}\right) \|F\|^2_{L^2(\R)}\,,
\end{equation}
with equality if and only if $|F(0)|^2 = \Delta \|F\|^2_{L^2(\R)}$ and $F\big(\tfrac{k}{\Delta}\big) = 0$ for $k \in \Z \setminus \{0\}$. 
This shows that ${\bf C}^+_{{\rm O}, \pi\Delta} = \sqrt{1 + \tfrac{\Delta}{2}}$, with the only extremizers, from \eqref{20210517_11:31}, being given by $F(z) = c\,\frac{\sin \pi \Delta z}{\pi \Delta z}$ with $c \in \C\setminus \{0\}$. This completes the proof of Theorem \ref{Thm12_O}.

\smallskip

Now let $0 < \Delta \leq 1$. For any $F \in \mathcal{H}_{\pi\Delta}$, letting $H(z):= F(z)\overline{F(\overline{z})}$, we have that $H$ has exponential type at most $2\pi \Delta$ and belongs to $L^1(\R)$. Using \eqref{20210506_11:40_1} and \eqref{20210506_11:42}, when $G \in \{\rm{SO(even), SO(odd)}\}$, we have
\begin{align}\label{20210517_11:59}
\|F\|_{L^2(\R, W_G)}^2 = \int_{\R} \widehat{H}(y) \,\widehat{W_{G}}(y)\,\dy = \int_{\R} \widehat{H}(y) \,\widehat{W_{{\rm O}}}(y)\,\dy  = \|F\|_{L^2(\R, W_{\rm O})}^2. 
\end{align}
The conclusion is that part (i) of Theorems \ref{Thm13_SOeven} and \ref{Thm15_SOodd} is equivalent to the already established Theorem \ref{Thm12_O} in the range $0 < \Delta \leq 1$. When $G = {\rm Sp}$ the same reasoning as in \eqref{20210517_11:59} yields
\begin{align*}
\|F\|_{L^2(\R, W_{\rm Sp})}^2 = \int_{\R} \widehat{H}(y) \,\widehat{W_{{\rm Sp}}}(y)\,\dy =  \int_{\R} |F(x)|^2 \big(1 - \tfrac{1}{2} \boldsymbol{\delta}_0(x) \big) \,\dx = \|F\|^2_{L^2(\R)} - \frac{|F(0)|^2}{2}.
\end{align*}
As in \eqref{20210517_12:04}, this leads us to ${\bf C}^+_{{\rm Sp}, \pi\Delta} = 1$, with $F$ being an extremizer if and only if $F(0) =0$, and ${\bf C}^-_{{\rm Sp}, \pi\Delta} = \sqrt{1 - \tfrac{\Delta}{2}}$, with $F$ being an extremizer if and only if $F(z) = c\,\frac{\sin \pi \Delta z}{\pi \Delta z}$ with $c \in \C\setminus \{0\}$. This concludes the proof of part (i) of Theorem \ref{Thm14_Sp}.

\subsection{Extremal eigenvalues} \label{Extr_eigen} For part (ii) of Theorems \ref{Thm13_SOeven} -- \ref{Thm15_SOodd} we take a slightly different path, bringing in elements from the theory of compact and self-adjoint operators as already done in \S \ref{SubS_Interlude}. Throughout this subsection, assume that $G \in \{{\rm Sp, SO}({\rm even}), {\rm SO}(\rm{odd})\}$ and $\Delta >1$. Let us write $W_G$ in \eqref{densities} as 
$$W_G(x) = 1 + \Phi_G(x).$$
Hence, if $F \in \mathcal{H}_{\pi \Delta}$, we have
$$\|F\|_{L^2(\R, W_G)}^2 = \|F\|_{L^2(\R)}^2 + \int_{\R} |F(x)|^2\, \Phi_G(x)\,\dx.$$
From now on let $I := \big[\!-\!\frac{\Delta}{2}, \frac{\Delta}{2}\big]$. By Plancherel's theorem, note that 
\begin{align*}
\int_{\R} |F(x)|^2\, \Phi_G(x)\,\dx = \int_{\R} \big(\widehat{F} * \widehat{\Phi}_G\big)(y) \,\overline{\widehat{F}(y)}\,\dy = \langle \,T_G (\widehat{F}) \,,\, \widehat{F}\,\rangle_{L^2(I)},
\end{align*}
where $T_G: L^2(I) \to L^2(I)$ is the operator defined by 
\begin{equation}\label{20210519_10:53}
(T_G\,u )(y) = \int_I  \widehat{\Phi}_G(y-s)\,u(s)\,\ds \ \ \ {\rm for} \ \ y \in I.
\end{equation}
Note that $T_G$ is a self-adjoint operator, i.e. $\langle T_G u , v \rangle_{L^2(I)} = \langle u , T_G v \rangle_{L^2(I)}$ for any $u, v \in L^2(I)$. Also, since the kernel $(y,s) \mapsto \widehat{\Phi}_G(y-s)$ belongs to $L^2(I \times I)$, $T_G$ is a Hilbert-Schmidt operator and hence compact. One should always keep in mind that, from the Paley-Wiener theorem, the map $F \to \widehat{F}$ is an isometry between $\mathcal{H}_{\pi \Delta}$ and $L^2(I)$. Defining 
\begin{equation}\label{20210519_10:41}
{\bf L}^-_{G, \pi\Delta} := \inf_{\substack{F \in \mc{H}_{\pi\Delta} \\ F\neq {\bf 0}}}  \frac{\int_{\R} |F(x)|^2\, \Phi_G(x)\,\dx}{\|F\|_{L^2(\R)}^2}  = \inf_{\substack{\widehat{F} \in L^2(I) \\ \widehat{F}\neq {\bf 0}}} \frac{\langle \,T_G (\widehat{F}) \,,\, \widehat{F}\,\rangle_{L^2(I)}}{\|\widehat{F}\|_{L^2(I)}^2} 
\end{equation}
and
\begin{equation}\label{20210519_10:42}
{\bf L}^+_{G, \pi\Delta} := \sup_{\substack{F \in \mc{H}_{\pi\Delta} \\ F\neq {\bf 0}}}  \frac{\int_{\R} |F(x)|^2\, \Phi_G(x)\,\dx}{\|F\|_{L^2(\R)}^2} = \sup_{\substack{\widehat{F} \in L^2(I) \\ \widehat{F}\neq {\bf 0}}} \frac{\langle \,T_G (\widehat{F}) \,,\, \widehat{F}\,\rangle_{L^2(I)}}{\|\widehat{F}\|_{L^2(I)}^2} \,,
\end{equation}
it is clear that 
$${\bf C}^-_{G, \pi\Delta} = \sqrt{1 +{\bf L}^-_{G, \pi\Delta}} \ \ \ {\rm and} \ \ \ {\bf C}^+_{G, \pi\Delta} = \sqrt{1 + {\bf L}^+_{G, \pi\Delta}}\ .$$

\smallskip

Assume for a moment that we have established the claim that 
\begin{equation}\label{20210518_09:24}
{\bf L}^{-}_{G, \pi\Delta} <0 \ \ \ {\rm and} \ \ \ {\bf L}^{+}_{G, \pi\Delta} >0.
\end{equation}
From the classical theory of compact and self-adjoint operators, e.g.  \cite[Theorem 6.8 and Proposition 6.9]{Brezis}, the infimum and supremum in \eqref{20210519_10:41} and \eqref{20210519_10:42} are attained by eigenfunctions $\widehat{F} \in L^2(I)$ of $T_G$.

\subsection{Non-trivial signs} We now verify the claim \eqref{20210518_09:24} for $G \in \{{\rm Sp, SO}({\rm even}),$ $ {\rm SO}(\rm{odd})\}$ and $\Delta >1$. Let $F(z) = \tfrac{\sqrt{\Delta}\sin \pi \Delta z}{\pi \Delta z} \in \mathcal{H}_{\pi \Delta}$ and note that $\|F\|_{L^2(\R)} = 1$. By Plancherel's theorem, for any $t \in \R$,  
\begin{align}\label{20210517_16:14}
\begin{split}
\int_{\R} |F(x-t)|^2\, \left( \tfrac{\sin 2 \pi x}{2 \pi x}\right)\dx & = \frac{1}{2} \int_{\R} \max\big\{1 - \tfrac{|y|}{\Delta}, 0\big\} \,{\bf 1}_{[-1,1]}(y)\,e^{-2 \pi i y t} \, \dy \\
& = \frac{(\Delta -1)\pi t \sin(2\pi t) + \sin^2(\pi t)}{2\Delta (\pi t)^2}.
\end{split}
\end{align}
For $t =0$, the right-hand side of \eqref{20210517_16:14} yields $1 - \tfrac{1}{2\Delta}$ and hence
$$- {\bf L}^{-}_{{\rm Sp}, \pi\Delta} = {\bf L}^{+}_{{\rm SO(even)}, \pi\Delta} \geq 1 - \tfrac{1}{2\Delta} >0.$$
On the other hand, taking $t = k + \tfrac{3}{4}$ with $k \geq 0$ a large integer, we see that the numerator of \eqref{20210517_16:14} eventually becomes strictly negative. This is enough to conclude that 
$$- {\bf L}^{+}_{{\rm Sp}, \pi\Delta} = {\bf L}^{-}_{{\rm SO(even)}, \pi\Delta} < 0.$$

Similarly, as in \eqref{20210517_16:14}, we have
\begin{equation}\label{20210517_16:38}
\int_{\R} |F(x-t)|^2\, \left( -\tfrac{\sin 2 \pi x}{2 \pi x} + \boldsymbol{\delta}_0(x)\right)\dx=  \frac{-(\Delta -1)\pi t \sin(2\pi t) - \sin^2(\pi t) + 2\sin^2(\pi \Delta t)}{2\Delta (\pi t)^2}.
\end{equation}
For $t =0$, the right-hand side of \eqref{20210517_16:38} becomes $\Delta - 1 + \tfrac{1}{2\Delta}$ and hence 
$${\bf L}^{+}_{{\rm SO(odd)}, \pi\Delta} \geq \Delta - 1 + \tfrac{1}{2\Delta} >0.$$
On the other hand, taking $t = k + \tfrac{1}{4}$ with $k \geq 0$ a large integer, we see that the numerator of \eqref{20210517_16:38} eventually becomes strictly negative. Hence
$${\bf L}^{-}_{{\rm SO(odd)}, \pi\Delta} < 0.$$

\subsection{Proofs of parts (ii) of Theorems \ref{Thm13_SOeven} and \ref{Thm14_Sp}: finding $ {\bf L}^{\pm}_{{\rm SO(even)}, \pi\Delta} = - {\bf L}^{\mp}_{{\rm Sp}, \pi\Delta}$} \label{Sub5.5} Assume now that $G = {\rm SO(even)}$ and $1 < \Delta \leq 2$. From the discussion in \S \ref{Extr_eigen}, we must find the extremal eigenvalues of the compact operator $T_G$ defined in \eqref{20210519_10:53}. Letting $u = \widehat{F}$, we must solve the functional equation (recall that $I := \big[\!-\!\frac{\Delta}{2}, \frac{\Delta}{2}\big]$)
 \begin{equation}\label{20210518_11:22}
 \dfrac{1}{2}\int_{y-1}^{y+1}u(s)\,\ds = \eta \, u(y)  
 \end{equation}
in $L^2(I)$, with $\eta \neq 0$. This is a challenge similar in spirit to what we have faced in \S \ref{SubS_Interlude}, \S \ref{Subs_Delta=2}, and \S \ref{subsec_Sp_1-2}. As before, from \eqref{20210518_11:22} we may assume without loss of generality that $u$ is absolutely continuous on $I$, and that \eqref{20210518_11:22} holds pointwise everywhere on $I$. By the fundamental theorem of calculus, we then get
\begin{align} \label{20210518_13:19}
\begin{split}
\eta\, u'(y) + \tfrac12 u(y-1) &= 0 \ \ \ {\rm for}  \ \ 1 - \tfrac{\Delta}{2} < y <  \tfrac{\Delta}{2};\\
\eta\, u'(y)  &= 0 \ \ \ {\rm for}  \ \  \tfrac{\Delta}{2}-1 < y <  1-\tfrac{\Delta}{2};  \\
\eta\, u'(y) - \tfrac12 u(y+1) &= 0 \ \ \ {\rm for}  \ \ -\tfrac{\Delta}{2} < y <  \tfrac{\Delta}{2}-1,
\end{split}
\end{align}
where the second equation above can be disregarded if $\Delta =2$. Manipulating these equations, as in \S \ref{Subs_Delta=2}, we are led to the general solution
\begin{align}\label{20210518_13:20}
u(y) = \left\{
\begin{array}{lll}
A\,e^{iy/2\eta}+B\,e^{-iy/2\eta} & {\rm for}  &1 - \tfrac{\Delta}{2} \leq y \leq  \tfrac{\Delta}{2};\\
D &  {\rm for} &  \tfrac{\Delta}{2}-1 \leq y \leq  1-\tfrac{\Delta}{2};\\
A_1\,e^{iy/2\eta}+B_1\,e^{-iy/2\eta} &  {\rm for} &-\tfrac{\Delta}{2} \leq y \leq  \tfrac{\Delta}{2}-1,
\end{array}
\right.
\end{align}
where $A,B,A_1, B_1, D \in \C$. In the philosophy of \S \ref{Subs_Delta=2}, these were the {\it descending steps}. We now proceed to our {\it ascending steps} to figure out the constants. Plugging \eqref{20210518_13:20} into the first equation of \eqref{20210518_13:19} we get 
\begin{equation}\label{20210519_15:39}
A_1 = - i\,e^{i/2\eta} A  \ \ \ {\rm and} \ \  \ B_1 =\,i\,e^{-i/2\eta} B.
\end{equation} 
At this point we have to find the constants $A,B,D$ (not all zero) and the unknown extremal eigenvalue $\eta \neq 0$ in such a way that our function $u$ is continuous on $I$ and verifies \eqref{20210518_11:22} at all points $y \in I$. One can show that a necessary and sufficient condition is given by the following system of equations:
\begin{align}\label{20210518_13:40}
\begin{cases}
a_1 A  + \overline{a_1} \,B  + \left( \tfrac{2 - \Delta}{2}\right) D= 0 \,;\\
b_1 A  + \overline{b_1} \,B + \left( \tfrac{2 - \Delta}{2}\right) D= 0\,;\\
\tau A + \overline{\tau} B - D= 0,
\end{cases}
\end{align}
with $a_1 := \eta\big( -i \,e^{\Delta i / 4 \eta} -  e^{\Delta i / 4 \eta} + i \,e^{(2-\Delta) i / 4 \eta}\big)$, $b_1 := \eta \big( -e^{\Delta i / 4 \eta} + e^{(2-\Delta) i / 4 \eta} + i \,e^{(2-\Delta) i / 4 \eta}\big)$, and $\tau := - i\, e^{\Delta i / 4 \eta}$. The first two equations in \eqref{20210518_13:40} come from the evaluation of \eqref{20210518_11:22} at the points $y = \tfrac{\Delta}{2}$ and $y =- \tfrac{\Delta}{2}$, while the third one (which is not necessary if $\Delta =2$) comes from the continuity of $u$ at the point $y = \tfrac{\Delta}{2} -1$. In \eqref{20210518_13:40}, multiplying the third equation by $ \left( \tfrac{2 - \Delta}{2}\right)$ and adding it up to the first two equations yields
\begin{align}\label{20210518_13:46}
\begin{cases}
a A + \overline{a} \,B = 0 \,;\\
b A + \overline{b} \,B = 0\,,
\end{cases}
\end{align}
with $a := a_1 + \left( \tfrac{2 - \Delta}{2}\right)\tau$ and $b := b_1 + \left( \tfrac{2 - \Delta}{2}\right)\tau$.

\smallskip

If $a\overline{b} - \overline{a}b \neq 0$, from \eqref{20210518_13:46} we would get $A= B = 0$, which would ultimately imply that our solution $u = {\bf 0}$, a contradiction. Hence we must have $a\overline{b} - \overline{a}b = 0$. This condition is equivalent to
\begin{align}\label{20210518_16:09}
\left(\tfrac{1}{2}+\tfrac{2-\Delta}{4\eta}\right)\cos\left(\tfrac{\Delta-1}{2\eta}\right)+\sin\left(\tfrac{\Delta-1}{2\eta}\right) = 1.
\end{align}
Hence our desired values of $\eta$ are the largest solution of \eqref{20210518_16:09} (which, in particular, verifies $0 < \eta <1$), and the smallest solution of \eqref{20210518_16:09} (which, in particular, verifies $-1 < \eta <0$).

\smallskip

A few words on the extremizers. If $(a,b) \neq (0,0)$, from \eqref{20210518_13:46} we get $B$ in terms of $A$, and from the third equation in \eqref{20210518_13:40} we get $D$ in terms of $A$. This determines $u$ uniquely (modulo multiplication by a complex constant $A$), which is the same as saying that the associated eigenspace has dimension $1$. This is what generally happens, with only one exception that we now describe. In order to have $a=b=0$, one must have $\eta = (\Delta-2)/2 < 0$ and $\sin\frac{(\Delta - 1)}{2\eta} = \sin\frac{(\Delta - 1)}{(\Delta-2)} = 1$, i.e. $\Delta = (1 - \pi + 4k\pi)/(1 - \frac{\pi}{2} + 2k\pi)$ for $k \in \N$. If $k=1$, i.e. when $\Delta = (1 + 3\pi) / (1 + \frac{3\pi}{2})$, one indeed has $\eta = (\Delta-2)/2$ being the smallest solution of \eqref{20210518_16:09}, and this is the one exception where the extremal eigenspace has dimension $2$.  If $\Delta = (1 - \pi + 4k\pi)/(1 - \frac{\pi}{2} + 2k\pi)$ for $k \geq 2$, then $\eta = (\Delta-2)/2$ is still a solution of \eqref{20210518_16:09}, but not the smallest one since $\eta^*  = (1- \Delta)/(3\pi)$ is a strictly smaller one.

\subsection{Proof of part (ii) of Theorem \ref{Thm15_SOodd}: finding $ {\bf L}^{\pm}_{{\rm SO(odd)}, \pi\Delta}$} The idea here is the same as in the previous subsection. Let $G = {\rm SO(odd)}$ and $1 < \Delta \leq 2$. We must solve the functional equation
 \begin{equation}\label{20210519_15:25}
 -\dfrac{1}{2}\int_{y-1}^{y+1}u(s)\,\ds +  \int_{I}u(s)\,\ds= \eta \, u(y)  
 \end{equation}
in $L^2(I)$, with $\eta \neq 0$. We may assume that $u$ is absolutely continuous on $I$, and that \eqref{20210519_15:25} holds pointwise everywhere on $I$. Proceeding as in \eqref{20210518_13:19} - \eqref{20210519_15:39} we arrive at
\begin{align*}
u(y) = \left\{
\begin{array}{lll}
A\,e^{iy/2\eta}+B\,e^{-iy/2\eta} & {\rm for}  &1 - \tfrac{\Delta}{2} \leq y \leq  \tfrac{\Delta}{2};\\
D &  {\rm for} &  \tfrac{\Delta}{2}-1 \leq y \leq  1-\tfrac{\Delta}{2};\\
 i\,e^{i/2\eta}\,A\,e^{iy/2\eta}- i\,e^{-i/2\eta}\,B\,e^{-iy/2\eta} &  {\rm for} &-\tfrac{\Delta}{2} \leq y \leq  \tfrac{\Delta}{2}-1,
\end{array}
\right.
\end{align*}
with constants $A,B,D$ (not all zero) and the unknown extremal eigenvalue $\eta \neq 0$ to be found. We are then led to the following system of equations (which is necessary and sufficient):
\begin{align}\label{20210519_15:47}
\begin{cases}
a_1 A  + \overline{a_1} \,B  + \left( \tfrac{2 - \Delta}{2}\right) D= 0 \,;\\
b_1 A  + \overline{b_1} \,B + \left( \tfrac{2 - \Delta}{2}\right) D= 0\,;\\
\tau A + \overline{\tau} B - D= 0,
\end{cases}
\end{align}
with $a_1 := \eta\big( -i \,e^{\Delta i / 4 \eta} + e^{\Delta i / 4 \eta} + i \,e^{(2-\Delta) i / 4 \eta} - 2e^{(2-\Delta) i / 4 \eta}\big)$, $b_1 := \eta\big( -2i \,e^{\Delta i / 4 \eta} + e^{\Delta i / 4 \eta} + i \,e^{(2-\Delta) i / 4 \eta} - e^{(2-\Delta) i / 4 \eta}\big)$, and $\tau :=  i\, e^{\Delta i / 4 \eta}$. The first two equations in \eqref{20210519_15:47} come from the evaluation of \eqref{20210519_15:25} at the points $y = \tfrac{\Delta}{2}$ and $y =- \tfrac{\Delta}{2}$, while the third one (which is not necessary if $\Delta =2$) comes from the continuity of $u$ at the point $y = \tfrac{\Delta}{2} -1$. In \eqref{20210519_15:47}, multiplying the third equation by $ \left( \tfrac{2 - \Delta}{2}\right)$ and adding it up to the first two equations yields
\begin{align*}
\begin{cases}
a A + \overline{a} \,B = 0 \,;\\
b A + \overline{b} \,B = 0\,,
\end{cases}
\end{align*}
with $a := a_1 + \left( \tfrac{2 - \Delta}{2}\right)\tau$ and $b := b_1 + \left( \tfrac{2 - \Delta}{2}\right)\tau$. We must then have $a\overline{b} - \overline{a}b = 0$, which is equivalent to
\begin{align}\label{20210519_16:44}
\left(\tfrac{3}{2}-\tfrac{2-\Delta}{4\eta}\right)\cos\left(\tfrac{\Delta-1}{2\eta}\right)-\sin\left(\tfrac{\Delta-1}{2\eta}\right) = 1.
\end{align}
Hence our desired values of $\eta$ are the largest solution of \eqref{20210519_16:44} and the smallest solution of \eqref{20210519_16:44} (which, in particular, verifies $-1 < \eta <0$). The corresponding extremal function is unique (modulo multiplication by a complex constant) in all cases. Indeed, one can check that the only possibility to have $a=b=0$ is to have $\eta = (2-\Delta)/6 > 0$ and $\sin\frac{(\Delta - 1)}{2\eta} = \sin\frac{3(\Delta - 1)}{(2- \Delta)}= -1$, i.e. $\Delta = (3 -\pi  + 4 k \pi )/(3 -\frac{\pi}{2} + 2k \pi)$ for $k \in \N$. When $k=1$, i.e. when $\Delta = (3 + 3\pi)/(3 + \frac{3\pi}{2})$, we have that $\eta =  (2-\Delta)/6 = 1/(6 + 3\pi)$ is a solution of \eqref{20210519_16:44} but one can verify numerically that there is a larger one around $0.93303\ldots$. If $\Delta = (3 -\pi  + 4 k \pi )/(3 -\frac{\pi}{2} + 2k \pi)$ for $k \geq 2$, then $\eta =  (2-\Delta)/6>0$ is still a solution of \eqref{20210519_16:44} but it is not the largest one since $\eta^* = (\Delta - 1)/(3\pi)$ is a strictly larger one.

\section*{Acknowledgments}
E.C. acknowledges support from FAPERJ - Brazil.  A.C. was supported by Grant $275113$ of the Research Council of Norway. M.B.M. was supported in part by the Simons Foundation (award 712898) and the National Science Foundation (DMS-2101912). We are thankful to the referee and Emily Quesada-Herrera for helpful comments and discussions, and to Guillaume Ricotta and Emmanuel Royer for bringing reference \cite{Bernard} to our attention.

\end{document}